\author{Wolfram Bauer and Joshua Isralowitz}
\newtheorem{theorem}{Theorem}[section] \newtheorem{corollary}[theorem]{Corollary}
\newtheorem{proposition}[theorem]{Proposition} \newtheorem{lemma}[theorem]{Lemma}
\theoremstyle{remark} \newtheorem*{remark}{Remark} 
\DeclareMathOperator{\supp}{supp} \DeclareMathOperator{\card}{card}
\DeclareMathOperator{\diam}{diam} \DeclareMathOperator{\dist}{dist} \DeclareMathOperator{\s}{span}
\DeclareMathOperator{\SOT}{SOT} \DeclareMathOperator{\WOT}{WOT} \DeclareMathOperator{\e}{e}
\DeclareMathOperator{\Imag}{Im} 
\newcommand{\C}{\ensuremath{\mathbb{C}^n}} \newcommand{\B}{\ensuremath{B_\alpha }}
\newcommand{\Tf}{\ensuremath{T_f ^\alpha }} 
\newcommand{\Fp}{\ensuremath{F_\alpha ^p }} \newcommand{\Fq}{\ensuremath{F_\alpha ^q }}
\newcommand{\Ft}{\ensuremath{F_\alpha ^2 }} \newcommand{\Lt}{\ensuremath{L_\alpha ^2 }}
\newcommand{\Lp}{\ensuremath{L_\alpha ^p }}
\begin{document}

\begin{frontmatter}

\author{Wolfram Bauer and Joshua Isralowitz \fnref{1}} \fntext[1]{Both authors were supported by an
Emmy-Noether grant of Deutsche Forschungsgemeinschaft} \ead{wbauer@uni-math.gwdg.de,
jbi2@uni-math.gwdg.de} \address{Mathematisches Institut \\ Georg-August Universit\"{a}t
G\"{o}ttingen \\ Bunsenstra$\ss$e 3-5 \\ D-37073, G\"{o}ttingen \\ Germany}

\title{Compactness characterization of operators in the Toeplitz algebra of the Fock space $F_\alpha
^p$.}

\begin{abstract} For $1 < p < \infty$ let $\mathcal{T}_p ^\alpha$ be the norm closure of the algebra generated by Toeplitz operators
with bounded symbols acting on the standard weighted Fock space $F_\alpha ^p$.  In this paper, we will show that an operator $A$ is
compact on $F_\alpha ^p$ if and only if $A \in \mathcal{T}_p ^\alpha$ and the Berezin transform $B_\alpha
(A)$ of $A$  vanishes at infinity. \end{abstract}

\begin{keyword} Toeplitz algebra, Toeplitz operators, Fock spaces \MSC[2010]{ 47B35,  47B38, 47L80 }

\end{keyword}

\end{frontmatter}

\section{Introduction and preliminaries}

For any $\alpha >0$, consider the Gaussian measure \begin{equation*}
d\mu_{\alpha}(z):=\left(\frac{\alpha}{\pi}\right)^ne^{-\alpha |z|^2} dv(z), \end{equation*} where
$dv$ denotes the usual Lebesgue measure on $\mathbb{C}^n\cong \mathbb{R}^{2n}$. Let $1 \leq  p <
\infty$, and write $L_{\alpha}^p$ for the space of (equivalence classes) of measurable complex
valued function $f$ on $\mathbb{C}^n$ such that \begin{equation} \|f\|_{\alpha,p}:=\left[
\left(\frac{p\alpha}{2\pi}\right)^n\int_{\mathbb{C}^n}\Big{|}f(z)e^{-\alpha|z|^2/2}\Big{|}^pdv(z)\right]^{1/p}<\infty.
\tag{1.1} \end{equation} If $\mathcal{H}(\mathbb{C}^n)$ denotes the space of all entire functions on
$\mathbb{C}^n$, then the Fock space $F_{\alpha}^p$ is the Banach space defined by
$F_{\alpha}^p:=\mathcal{H}(\mathbb{C}^n)\cap L_{\alpha}^p$ with the norm $\|\cdot\|_{\alpha,p}$ (cf.
\cite{JPR}). Recall that $F_{\alpha}^2$ is a Hilbert space with the natural inner product $\langle
\cdot, \cdot \rangle _\alpha$ induced by $(1.1)$ and is sometimes called the Segal-Bargmann space,
cf. \cite{Barg}. In the case of  $p=\infty$, we define the Banach space $F_\alpha ^\infty$ by
\begin{equation*} F_{\alpha}^{\infty}:=\Big{\{}f\in \mathcal{H}(\mathbb{C}^n) \: : \:
\|f\|_{\alpha,\infty}: = \|fe^{-\frac{\alpha}{2}|\cdot|^2}\|_{\infty}<\infty\Big{\}}.
\end{equation*}

 Let $P_\alpha$ be the orthogonal projection from $L_\alpha ^2$ onto $F_\alpha ^2$ given by
 \begin{align} P_\alpha f (z) = \int_{\mathbb{C}^n} e^{\alpha (z \cdot \overline{w})} f(w) \,
 d\mu_\alpha (w). \nonumber \end{align}   It is well known \cite{JPR} that as an integral operator,
 $P_\alpha$ is a bounded projection from $L_\alpha ^p$ onto $  F_\alpha ^p$ for all $1 \leq p \leq
 \infty$.  If $g \in L^\infty$, then we can define the bounded Toeplitz operator $T_g ^\alpha$ on
 $F_\alpha ^p$ by the formula \begin{equation*} T_g ^\alpha := P_\alpha M_g \end{equation*} where
 $M_g$ is ``multiplication by $g$.''

Now let \begin{equation*} K^{\alpha} (w, z) = e^{\alpha (w \cdot \overline{z})} \end{equation*} be
the reproducing kernel of $F_\alpha ^2 $ and let $k_z ^{\alpha}$ be the corresponding normalized
reproducing kernel given by \begin{equation*}  k_z ^\alpha (w) = e^{\alpha (w \cdot \overline{z}) -
\frac{\alpha}{2} |z|^2}. \end{equation*} Given any bounded operator $A$ on $F_\alpha ^p$  for $1
\leq p < \infty$, let $B_\alpha (A)$ be the Berezin transform of $A$ defined by \begin{equation*}
 B_\alpha (A) (z) =  \langle A k_z ^\alpha, k_z ^\alpha \rangle _\alpha. \tag{1.2} \end{equation*}
 Since $\|k_z ^\alpha \|_{\alpha, p} = 1$ for all $z \in \mathbb{C}^n$ and $1 \leq p < \infty$, it
 is easy to see $ B_\alpha (A)$ is well defined and in fact bounded. Furthermore, it is well known and easy to show that the map $A \mapsto B_\alpha(A)$ is one-to-one if $A$ is bounded on $F_\alpha ^p$ (see \cite{F}, p. 42 for a proof of the special case $\alpha = 1$ and $p = 2$ that easily extends to general case $\alpha > 0$ and $1 < p < \infty$.) Moreover, for a ``nice enough''
 function $f$ (for example, $f \in L^\infty$), we define the Berezin transform of $f$ to be
 \begin{align*} B_\alpha (f)(z) & := \langle f k_z ^\alpha, k_z ^\alpha \rangle _\alpha \nonumber \\
 & = \left(\frac{\alpha}{\pi}\right) ^n \int_{\mathbb{C}^n} f(w)  e^{- \alpha |z - w|^2 } \, dv(w).
 \nonumber \end{align*}  Note that an easy application of Fubini's theorem gives us that $\B (\Tf) =
 \B (f)$. Also note that when $f$ is positive and measurable, we can define the (possibility
 infinite) function $B_\alpha (f)$ without any other assumptions on $f$.

Information regarding the operator $A$ can be often described in terms of properties of the function
$B_\alpha (A)$, and this point of view has been especially successful when dealing with the
boundedness, compactness, and Schatten class membership of $A$.  Note that $\underset{z \rightarrow
\infty}{\lim} \, k_z ^\alpha = 0$ weakly on $F_\alpha ^p$ if $1 < p < \infty$, so that $B_\alpha
(A)$ vanishes at infinity if $A$ is compact.  Unfortunately, the converse in general is not true for
bounded operators on the Fock space $F_\alpha ^p$, and is even not true for certain Toeplitz
operators on $F_\alpha ^p$  (see \cite{BC} for examples on $F_\alpha ^2$). However,  it was proven
in \cite{E} that $A$ is compact on $\Ft$ if and only if $B_\alpha (A)$ vanishes at infinity when $A$
is in the algebra generated by $\{T_f ^\alpha : f \in L^\infty\}$ (that is, $A$ is the finite sum of
finite products of Toeplitz operators $\Tf$ with bounded symbols $f$.)   Moreover, it was proved in
\cite{S} that if $1 < p < \infty$ and $A$  is any bounded operator on the standard unweighted Bergman space $L_a ^p (\mathbb{B}_n, dv)$ of the unit ball $\mathbb{B}_n$, then
$A$ is compact if and only if $A$ is in the norm closure of the algebra generated by Toeplitz
operators with bounded symbols and the Berezin transform $B (A)$ of $A$ associated to $L_a ^2
(\mathbb{B}_n, dv)$ vanishes on the boundary  $\partial \mathbb{B}_n$ (see also \cite{MW} where the results of \cite{S} are extended to the weighted Bergman space $L_a ^p (\mathbb{B}_n, dv_\gamma)$ with standard weights $dv_\gamma$ for $\gamma > -1$.)

 In this paper, we will show that this result also holds for the Fock space $F_\alpha ^p$ when $1 <
 p < \infty$. In particular, if $\mathcal{T}_p ^\alpha$ is the norm closure of the algebra generated by Toelitz operators with bounded symbols acting on $F_\alpha ^p$, then we will prove the following, which is the main result of this paper:

 \begin{theorem}  If $1 < p
  < \infty$ and $A$ is a bounded operator on $F_\alpha ^p$, then $A$ is compact if and only if $A
  \in \mathcal{T}_p ^\alpha$ and $B_\alpha (A)$ vanishes at infinity. \nonumber \end{theorem}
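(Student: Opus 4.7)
\emph{Plan.}

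\emph{Easy direction.} Suppose $A$ is compact. The weak convergence $k_z^\alpha \to 0$ in $F_\alpha^p$ (noted just above the theorem) gives $\|A k_z^\alpha\|_{\alpha,p} \to 0$, and the duality bound
\[
|B_\alpha(A)(z)| \leq \|A k_z^\alpha\|_{\alpha,p}\,\|k_z^\alpha\|_{\alpha,p'} = \|A k_z^\alpha\|_{\alpha,p}
\]
yields $B_\alpha(A)(z) \to 0$. To place $A$ in $\mathcal{T}_p^\alpha$, I would use that this algebra is norm-closed and that the finite-rank operators are norm-dense in the compacts, reducing the task to showing that a total family of rank-one operators sits inside $\mathcal{T}_p^\alpha$. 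This is the standard computation that $|B(z, r)|^{-1}\,T_{\chi_{B(z, r)}}^\alpha$ converges in norm to a constant multiple of $k_z^\alpha \otimes k_z^\alpha$ as $r \to 0^+$ (direct evaluation with $K^\alpha$ and the reproducing identity), together with polarization to recover the general rank-one operators on the dense span of $\{k_z^\alpha\}$.

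\emph{Hard direction: setup.} Introduce the Weyl translation $W_z f(w) = k_z^\alpha(w)\, f(w - z)$, which is a surjective isometry of $F_\alpha^p$ for each $z$ and obeys the covariance $W_z T_f^\alpha W_{-z} = T_{f(\,\cdot\,-z)}^\alpha$ for $f \in L^\infty$. In particular $\mathcal{T}_p^\alpha$ is invariant under $A \mapsto W_{-z} A W_z$, and
\[
B_\alpha(A)(z) = \langle W_{-z} A W_z\, 1,\, 1\rangle_\alpha,
\]
so the Berezin hypothesis amounts to smallness of the bilinear form of $W_{-z} A W_z$ on the constant function $1$ as $|z| \to \infty$. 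The target is the essential-norm estimate
\[
\|A\|_e \leq C \limsup_{|z| \to \infty} |B_\alpha(A)(z)| \qquad (A \in \mathcal{T}_p^\alpha),
\]
which immediately forces $A$ to be compact when the right-hand side vanishes.

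\emph{Hard direction: main argument.} To obtain this bound I would adapt a Su\'arez-type localization to the Fock setting. Fix a smooth partition of unity $1 = \sum_{\gamma \in r\mathbb{Z}^{2n}} \varphi_\gamma$ in which each $\varphi_\gamma$ is a translate of a single bump supported in a Euclidean ball near $\gamma$, and build a localized approximant $A_r$ from the double sum $\sum_{\gamma, \delta} T_{\varphi_\gamma}^\alpha A T_{\varphi_\delta}^\alpha$, retaining near-diagonal terms ($|\gamma - \delta|$ small) and truncating off-diagonal ones. Each $T_{\varphi_\gamma}^\alpha$ is compact on $F_\alpha^p$ (a bounded compactly supported symbol gives a Hilbert--Schmidt operator on $F_\alpha^2$; complex interpolation together with duality transfers this to general $p$), so every finite truncation of $A_r$ is compact. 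Weyl covariance transports each near-diagonal block to the origin via $T_{\varphi_\gamma}^\alpha A T_{\varphi_\gamma}^\alpha = W_\gamma T_{\varphi_0}^\alpha (W_{-\gamma} A W_\gamma) T_{\varphi_0}^\alpha W_{-\gamma}$, where the Berezin-vanishing hypothesis controls its norm for $|\gamma|$ large, forcing the tail to be small in norm and so making $A_r$ itself compact. The decisive step is $\|A - A_r\| \to 0$ as the lattice is refined: by density this reduces to $A$ a finite product of $L^\infty$ Toeplitz operators, whereupon the off-diagonal cross terms $T_{\varphi_\gamma}^\alpha A T_{\varphi_\delta}^\alpha$ with $|\gamma - \delta|$ large are estimated via Schur-type integrals against $|K^\alpha(w, z)| = e^{\alpha \mathrm{Re}(w \cdot \bar z)}$. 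The Gaussian off-diagonal decay of the Fock kernel is exactly what makes these double sums summable. The main technical obstacle is arranging the Schur estimates so that the bound depends only on $\|A\|$ and not on the number or order of Toeplitz factors in a given representation of $A$, which requires an induction on product length combined with Fock-specific pointwise kernel estimates; this is what distinguishes the Fock argument from its Bergman-ball antecedent in \cite{S}.
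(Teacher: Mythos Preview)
Your easy direction is fine; the approximation of rank-one operators via $|B(z,r)|^{-1}T^\alpha_{\chi_{B(z,r)}}\to c\,k_z^\alpha\otimes k_z^\alpha$ followed by polarization is a legitimate alternative to the paper's route through Gaussian symbols $q_\beta$ and Weyl shifts (Lemma~3.8 and Theorem~3.9).

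The hard direction, however, has a genuine gap. Your stated target
\[
\|A\|_e \leq C\,\limsup_{|z|\to\infty}|B_\alpha(A)(z)|
\]
is almost certainly false as a quantitative inequality, and your argument for it breaks at the sentence ``the Berezin-vanishing hypothesis controls its norm for $|\gamma|$ large.'' The Berezin value $B_\alpha(A)(\gamma)=\langle (W_{-\gamma}AW_\gamma)1,1\rangle_\alpha$ is a \emph{single} matrix coefficient of $A_\gamma:=W_{-\gamma}AW_\gamma$. Knowing that this one coefficient tends to zero gives you no control over $\|T^\alpha_{\varphi_0}A_\gamma T^\alpha_{\varphi_0}\|$, since $T^\alpha_{\varphi_0}$ is a compact operator of infinite rank, not a rank-one projection onto constants. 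To make the tail small you would need $A_\gamma\to 0$ at least in the strong operator topology, and that does \emph{not} follow from the vanishing of the diagonal Berezin coefficients alone.

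What is missing is the injectivity of the Berezin transform. The paper's mechanism is: pass to the maximal ideal space $M_{\mathcal A}$ of the uniformly continuous bounded functions, show that $z\mapsto A_z$ extends SOT-continuously to $M_{\mathcal A}$ (Proposition~5.3), and then observe that for $x\in M_{\mathcal A}\setminus\mathbb{C}^n$ the limit operator $A_x$ satisfies $B_\alpha(A_x)\equiv 0$, whence $A_x=0$ by injectivity (Lemma~6.1). The essential norm is then compared not with $\limsup|B_\alpha(A)|$ but with $\sup_{x\in M_{\mathcal A}\setminus\mathbb{C}^n}\|A_x\|$ (Theorem~6.2), which is the correct quantitative replacement for your target and whose proof requires, beyond the localization you sketch, the invertibility of the lattice Toeplitz operator $T^\alpha_\nu$ (Lemma~4.3) and a Fock interpolation lemma (Lemma~4.1). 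Without the injectivity step or an equivalent device, your scheme cannot close.
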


Moreover, in Section $3$, we will show that $\mathcal{T}_p ^\alpha$ is in fact the closed algebra generated by Toeplitz operators $T_\nu ^\alpha$ where $\nu$ is a complex Borel measure on $\C$ such that the total variation measure $|\nu|$ is Fock-Carleson, which greatly widens the scope of Theorem $1.1$ (see Section $2$ for a discussion of Fock-Carleson measures and Toeplitz operators with measure symbols.) Note that a version of this result was proven in \cite{S} for the unweighted Bergman space of the ball and was proven in \cite{MW} for the weighted Bergman space of the ball.
The basic strategy used to prove Theorem $1.1$ is similar to the strategy used in \cite{S} to prove
the corresponding result for the Bergman space, and in particular relies on obtaining quantitative
estimates for the essential norm $\|A\|_{\e}$ of operators $A \in \mathcal{T}_p  ^\alpha$ for $1 < p <
\infty$.  However, the details of the proofs involved in implementing this strategy will often be
considerably different than details found in \cite{S}.  Note that this is often the case when one is
trying to prove a result for the Fock space that is already known to be true for the Bergman space
(see \cite{BCI, E} for example).

We now give a short outline of the rest of the paper.  Sections $2 - 5$ will consist of preliminary
lemmas that will be used to prove Theorem $1.1$ and Section $6$ will contain a proof of Theorem
$1.1$.  More specifically, Section $2$ will discuss Fock-Carleson measures and prove an important
lemma (Lemma $2.6$) that will be used in Section $3$.  Section $3$ will discuss various
approximation results that will be needed to prove Theorem $1.1$.  Section $4$ will prove two
important lemmas (Lemmas $4.1$ and $4.3$) related to sampling and interpolation in Fock spaces.
Section $5$ will introduce a useful uniform algebra $\mathcal{A}$ and will extend the Berezin
transform and other related objects  that are defined on $\mathbb{C}^n$ to the maximal ideal space
$M_\mathcal{A}$ of $\mathcal{A}$. Section $6$ will tie all these ideas and results together to prove
Theorem $1.1$, and finally in Section $7$ we will discuss improvements to our results when $p  = 2$
that are similar to the ones in \cite{S}, and we will briefly discuss some open problems.

We will close this introduction with a short comment about the proof of Theorem $1.1$ and Section
$6$.  Since $(F_\alpha ^p)^* = F_\alpha ^q$ under the natural pairing induced by $F_\alpha ^2$  and
since $A \in \mathcal{T}_p ^\alpha$ if and only if $A^* \in \mathcal{T}_q ^\alpha$ where $q$ is the dual exponent
of $p$, it is easy to see that we only need to prove Theorem $1.1$ for $2 \leq p < \infty.$  More
generally, if $A \in \mathcal{T}_p ^\alpha$, then it will be seen later (using the two above mentioned
facts) that many of the necessary estimates for $\|A\|_{\e}$ when $A \in \mathcal{T}_p ^\alpha$ only need
to be obtained for the case $2 \leq p < \infty$.   This simple observation will be crucial to the
proof of Theorem $1.1$ since many of the needed preliminary estimates are only directly obtainable
for $p = 2$ and (in much weaker form) for $p = \infty$, and will subsequently follow for all $2 \leq
p < \infty$ either by duality or by complex interpolation.  Along these lines, we will often use the
following consequence of complex interpolation in Fock spaces (see \cite{JPR}): \begin{lemma} Let $2
< p < \infty$.  If $A$ is an operator $A : F_\alpha ^2 + F_\alpha ^\infty \rightarrow  L_\alpha ^2 +
L_\alpha ^\infty $ such that $A$ maps $\Ft$ to $\Lt$ boundedly and maps $F_\alpha ^\infty$ to
$L_\alpha ^\infty$ boundedly, then $A : \Fp \rightarrow \Lp$ boundedly.  More precisely, we have
that \begin{align} \|A\|_{F_\alpha ^p \rightarrow L_\alpha ^p} \leq \|A\|_{F_\alpha ^2 \rightarrow
L_\alpha ^2} ^\frac{2}{p} \|A\|_{F_\alpha ^\infty \rightarrow L_\alpha ^\infty} ^{1 - \frac{2}{p}}.
\nonumber \end{align} \end{lemma}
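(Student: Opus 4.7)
The plan is to deduce Lemma 1.2 directly from the complex interpolation theorem together with the interpolation identity
\[
[F_\alpha^2, F_\alpha^\infty]_\theta = F_\alpha^p, \qquad \theta = 1-\tfrac{2}{p},
\]
which is the content of the cited result in \cite{JPR}. The right endpoint $p=\infty$ must be handled with the Calder\'on second complex method (or equivalently by working with $[F_\alpha^2, F_\alpha^\infty]^\theta$), since $F_\alpha^\infty$ is non-separable; this is essentially the only delicate point.

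First, I would record the analogous (and classical) identity for the ambient Lebesgue spaces, $[L_\alpha^2, L_\alpha^\infty]_\theta = L_\alpha^p$ with equal norms, which follows from the standard Riesz--Thorin/Calder\'on theory applied to the measure $d\mu_\alpha$ (or more precisely, to the measure $dv$ after absorbing the exponential weight into the function). Second, using that $P_\alpha$ is a bounded projection on $L_\alpha^p$ for every $p \in [1,\infty]$ (cited earlier from \cite{JPR}), I would realise $F_\alpha^p$ as a norm-one complemented subspace of $L_\alpha^p$ via a common retraction, which by the retraction--coretraction principle for complex interpolation transfers the $L$-space identity to $[F_\alpha^2,F_\alpha^\infty]_\theta = F_\alpha^p$.

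Third, I would apply the abstract Stein--Calder\'on complex interpolation theorem to the single operator $A$, viewed simultaneously as a bounded map $F_\alpha^2 \to L_\alpha^2$ and $F_\alpha^\infty \to L_\alpha^\infty$. This immediately produces a bounded extension $A:F_\alpha^p \to L_\alpha^p$ with the multiplicative norm bound
\[
\|A\|_{F_\alpha^p \to L_\alpha^p} \leq \|A\|_{F_\alpha^2\to L_\alpha^2}^{\,1-\theta}\,\|A\|_{F_\alpha^\infty \to L_\alpha^\infty}^{\,\theta},
\]
and inserting $\theta = 1-2/p$ gives exactly the stated exponents $2/p$ and $1-2/p$. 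The hypothesis that $A$ is already defined on $F_\alpha^2 + F_\alpha^\infty$ ensures that the interpolated operator coincides with the original $A$ and is not merely an abstract extension, so no additional identification step is needed.

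The main obstacle, as noted, is confirming the endpoint interpolation identity $[F_\alpha^2,F_\alpha^\infty]_\theta = F_\alpha^p$; once this is taken from \cite{JPR} (or justified by the projection argument above), the rest of the proof is a one-line application of the complex interpolation theorem.
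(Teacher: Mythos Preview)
Your proposal is correct and matches the paper's approach: the paper does not give a proof of this lemma at all, but simply introduces it as ``the following consequence of complex interpolation in Fock spaces (see \cite{JPR})''. Your write-up is precisely the argument the paper leaves implicit---identify $[F_\alpha^2,F_\alpha^\infty]_\theta=F_\alpha^p$ from \cite{JPR} (or via the retraction through $P_\alpha$), pair it with the classical $[L_\alpha^2,L_\alpha^\infty]_\theta=L_\alpha^p$, and invoke the abstract complex interpolation theorem---so there is nothing to compare.
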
 \noindent

\section{Fock-Carleson measures and related operators}

Let $1 < p < \infty$.  A positive Borel measure $\nu$ on $\mathbb{C}^n$ will be called a
Fock-Carleson measure if \begin{equation*} \int_{\mathbb{C}^n} \left|f(z)  e^{-  \alpha |z|^2 /2 }
\right|^p \,   d\nu(z) \leq C \|f\|_{\alpha, p} ^p  \nonumber \end{equation*} for all $f \in
F_\alpha ^p$ with $C $ independent of $f$. In this case, define the Toeplitz operator $T_\nu ^\alpha
: F_\alpha ^p \rightarrow F_\alpha ^p$ by \begin{equation*} T_\nu ^\alpha f (z) =
\int_{\mathbb{C}^n} f(w) e^{\alpha (z \cdot \overline{w}) - \alpha |w|^2} \, d\nu(w). \nonumber
\end{equation*}   For any $r > 0$, and $z \in \mathbb{C}^n$, let $B(z, r)$ be a Euclidean ball
centered at $z$ with radius $r$. If $\nu$ is a Fock-Carleson measure, then we let $\imath_\nu $ be
the canonical imbedding from $F_\alpha ^p$ into $L^p _\alpha (d\nu)$ where $L^p _\alpha (d\nu)$ is the space of $\nu$ measurable functions $f$ where \begin{equation*} \int_{\mathbb{C}^n} \left|f(z)  e^{-  \alpha |z|^2 /2 }
\right|^p \,   d\nu(z)  < \infty. \end{equation*}  It turns out that the
property of $\nu$ being a Fock-Carleson measure is independent of both $p$ and $\alpha$, as the
following result in \cite{HL} shows:

\begin{lemma} For any $1 < p < \infty$ and any $\alpha,  r > 0$, the following quantities are
equivalent, where the constants of equivalence only depend on $p, n, \alpha$ and $r$:
\begin{list}{}{\setlength\parsep{0in}} \item $(a)  \ \|\nu\|_* := \underset{z \in
\mathbb{C}^n}{\sup} \int_{\mathbb{C}^n} e^{- \frac{\alpha}{2} |z - w|^2} \, d\nu(w), $ \item $(b) \
\|\imath_\nu\|_{F_\alpha ^p \rightarrow L^p _\alpha (d\nu)}^p,$ \item $(c) \ \underset{z \in
\mathbb{C}^n}{\sup} \nu(B(z, r)),$
 \item $(d) \  \|T_\nu ^\alpha \|_{F_\alpha ^p \rightarrow F_\alpha ^p} ^p.$ \end{list} \end{lemma}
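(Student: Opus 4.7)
The plan is to prove the cycle $(a) \Leftrightarrow (c)$, $(c) \Rightarrow (b) \Rightarrow (c)$, and $(b) \Leftrightarrow (d)$; the $p$-free characterization $(c)$ then serves as a hub that lets one transfer the $p$-dependent statements $(b)$ and $(d)$ across different exponents. The principal tool I rely on is the standard sub-mean-value estimate for Fock space functions: for each $r > 0$ there is a constant $C_r$ with
$$|f(z)|^p e^{-p\alpha|z|^2/2} \leq C_r \int_{B(z,r)} |f(w)|^p e^{-p\alpha|w|^2/2}\, dv(w) \qquad (f \in \Fp),$$
together with routine pointwise comparisons of the Gaussian weight $e^{-\alpha|z-w|^2/2}$ against characteristic functions of Euclidean balls.

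For $(a) \Leftrightarrow (c)$, the lower bound $e^{-\alpha|z-w|^2/2} \geq e^{-\alpha r^2/2}\chi_{B(z,r)}(w)$ immediately gives $(c)$ from $(a)$; conversely, I would tile $\C$ by balls $B(z_k, r)$ of bounded overlap and majorize the integral in $(a)$ by $\sum_k \nu(B(z_k,r)) \sup_{w \in B(z_k,r)} e^{-\alpha|z-w|^2/2}$, a sum that converges uniformly in $z$ by Gaussian decay. For $(c) \Rightarrow (b)$, I apply the sub-mean-value estimate inside the defining integral of $\|\imath_\nu\|^p$, interchange the order of integration by Fubini, and dominate $\int \chi_{B(w,r)}(z)\, d\nu(z)$ by $\sup_w \nu(B(w,r))$. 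For $(b) \Rightarrow (c)$, I test $\imath_\nu$ on the normalized kernels $k_z^\alpha$: since $\|k_z^\alpha\|_{\alpha,p} = 1$ and $|k_z^\alpha(w)| e^{-\alpha|w|^2/2} = e^{-\alpha|z-w|^2/2} \geq e^{-\alpha r^2/2}$ on $B(z,r)$, one gets $\nu(B(z,r)) \leq e^{p\alpha r^2/2}\|\imath_\nu\|^p$.

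For $(b) \Leftrightarrow (d)$, I would derive from the definition of $T^\alpha_\nu$ (via Fubini and the reproducing property) the sesquilinear identity
$$\langle T^\alpha_\nu f, g\rangle_\alpha = \int_{\C} f(w)\overline{g(w)} e^{-\alpha|w|^2}\, d\nu(w) \qquad (f \in \Fp,\; g \in \Fq),$$
where $1/p + 1/q = 1$. Applying Hölder in $L^p(d\nu)$ and $L^q(d\nu)$ to the weighted functions $f e^{-\alpha|\cdot|^2/2}$ and $g e^{-\alpha|\cdot|^2/2}$ yields $\|T^\alpha_\nu\|_{\Fp \to \Fp} \leq \|\imath_\nu\|_{\Fp \to L^p_\alpha(d\nu)}\, \|\imath_\nu\|_{\Fq \to L^q_\alpha(d\nu)}$; combined with the observation that $(b)$ holds at $p$ iff it holds at $q$ (both being equivalent to the $p$-free $(c)$), this proves $(d)$. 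Conversely, a direct computation of the Berezin transform gives $\B(T^\alpha_\nu)(z) = \int_{\C} e^{-\alpha|z-w|^2}\, d\nu(w)$, so $(d)$ bounds this integral uniformly in $z$; comparison of exponents via the equivalence $(a) \Leftrightarrow (c)$ then recovers $(a)$.

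The delicate point throughout is not any single implication, each of which is a direct consequence of the sub-mean-value estimate and Gaussian bookkeeping, but rather tracking the $p$-dependence of the constants and cleanly transferring the equivalences between $(b)$ and $(d)$ across different $p$ through the $p$-independent hub $(c)$. This $p$-free transfer is what makes the lemma meaningful as a single characterization valid simultaneously for all $1 < p < \infty$.
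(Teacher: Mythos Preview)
The paper does not prove this lemma; it is quoted from Hu--Lv \cite{HL} without argument, so there is no proof in the paper to compare yours against. Your outline is the standard one and each implication you describe is correct.

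One point you should make explicit rather than leave implicit: as literally stated, item $(d)$ carries a $p$-th power, and this appears to be a misprint. Testing with $\nu = \lambda\, dv$ for $\lambda > 0$ shows that $(a)$, $(b)$, and $(c)$ all scale linearly in $\lambda$, whereas $\|T_\nu^\alpha\|_{\Fp \to \Fp}^p$ scales like $\lambda^p$; so these quantities cannot be equivalent with constants independent of $\nu$ unless the exponent on $(d)$ is dropped. Your chain of inequalities in fact produces exactly the corrected equivalence $\|T_\nu^\alpha\| \approx (a) \approx (b) \approx (c)$: the H\"older step gives $\|T_\nu^\alpha\| \le \|\imath_\nu\|_{\Fp \to L^p_\alpha(d\nu)}\,\|\imath_\nu\|_{\Fq \to L^q_\alpha(d\nu)} \approx (c)^{1/p}(c)^{1/q} = (c)$, and the Berezin-transform step gives $\sup_z \int e^{-\alpha|z-w|^2}\, d\nu(w) \le \|T_\nu^\alpha\|$, which combined with $(a)\Leftrightarrow(c)$ returns $(a) \lesssim \|T_\nu^\alpha\|$. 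The rest of the paper (for instance the estimate $\|\sum_j M_{a_j} T_\nu^\alpha M_{b_j}\| \le N\|\nu\|_* o(\delta)$ in Lemma~2.6) is consistent with $(d)=\|T_\nu^\alpha\|$, not $\|T_\nu^\alpha\|^p$. Flag the discrepancy so a reader does not think your argument fails to match the displayed statement.
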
 

 Before we state and prove the main result of this section (Lemma $2.6$), we will need some
 preliminary results related to Fock-Carleson measures. Throughout the paper, we will let $C$ denote
 a constant that might change from estimate to estimate, or even from line to line in a single
 estimate.  We will indicate the parameters that $C$ depends on only when it is important to do so.

\begin{lemma} If $\nu$ is a Fock-Carleson measure and $F \subset \mathbb{C}^n$ is compact, then
there exists $C > 0$ independent of $f, F, $ and $\nu$  such that \begin{align} \|T_{\chi_F \nu}
^\alpha f\|_{F_\alpha ^2 \rightarrow F_\alpha ^2} \leq C \|\imath_\nu\|_{F_\alpha ^2 \rightarrow L^2
_\alpha (d\nu)} \left(\int_{F}  |f(z)|^2 e^{- \alpha |z|^2} \, d\nu(z) \right)^\frac{1}{2}
\nonumber \end{align} where $\chi_F$ is the characteristic function of $F$.\end{lemma}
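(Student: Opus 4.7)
The plan is to bound $\|T_{\chi_F \nu}^\alpha f\|_{\alpha,2}$ via Hilbert-space duality, using
\[
\|T_{\chi_F \nu}^\alpha f\|_{\alpha,2} = \sup_{\|g\|_{\alpha,2} = 1} \bigl| \langle T_{\chi_F \nu}^\alpha f, g \rangle_\alpha \bigr|,
\]
and then to express this pairing in a form where Cauchy--Schwarz combined with the Fock--Carleson hypothesis immediately gives the estimate.

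First I would compute the pairing explicitly. Writing out the definition of $T_{\chi_F \nu}^\alpha f(z)$ and of the $F_\alpha^2$-inner product, then swapping the order of integration, the inner $z$-integral collapses via the reproducing property of the Fock kernel $K_w^\alpha(z) = e^{\alpha (z \cdot \overline{w})}$ to $\overline{g(w)}$. One obtains
\[
\langle T_{\chi_F \nu}^\alpha f, g \rangle_\alpha = \int_F f(w) \overline{g(w)} \, e^{-\alpha |w|^2} \, d\nu(w).
\]
The use of Fubini is justified because $F$ is compact, $\nu$ is locally finite (since it is Fock--Carleson), and the combined integrand, after rewriting the exponent through the identity $\alpha \operatorname{Re}(z \cdot \overline{w}) - \tfrac{\alpha}{2}|w|^2 - \tfrac{\alpha}{2}|z|^2 = -\tfrac{\alpha}{2}|z-w|^2$, factors as $\bigl(|f(w)|e^{-\alpha|w|^2/2}\bigr)\bigl(|g(z)|e^{-\alpha|z|^2/2}\bigr)e^{-\alpha|z-w|^2/2}$, which is clearly integrable on $F \times \C$.

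Next I would apply Cauchy--Schwarz in $L^2(\chi_F\, d\nu)$ to get
\[
\bigl|\langle T_{\chi_F \nu}^\alpha f, g \rangle_\alpha\bigr| \leq \left( \int_F |f(w)|^2 e^{-\alpha |w|^2} \, d\nu(w) \right)^{1/2} \left( \int_F |g(w)|^2 e^{-\alpha |w|^2} \, d\nu(w) \right)^{1/2}.
\]
The second factor is bounded above by $\|g\|_{L^2_\alpha(d\nu)}$, and the Fock--Carleson property of $\nu$ gives $\|g\|_{L^2_\alpha(d\nu)} \leq \|\imath_\nu\|_{F_\alpha^2 \to L^2_\alpha(d\nu)} \|g\|_{\alpha,2}$. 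Taking the supremum over $\|g\|_{\alpha,2} = 1$ yields the claimed inequality (in fact with $C = 1$), uniformly in $f$, $F$, and $\nu$.

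No serious obstacle is anticipated: the argument is a short duality plus Cauchy--Schwarz calculation. The only step requiring care is the Fubini justification described above, and even that is routine once the Gaussian exponent is rewritten as $-\tfrac{\alpha}{2}|z-w|^2$.
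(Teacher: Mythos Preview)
Your proposal is correct and follows essentially the same approach as the paper: the paper also pairs $T_{\chi_F\nu}^\alpha f$ against a unit vector $g\in F_\alpha^2$, uses Fubini and the reproducing property to rewrite the pairing as $\int_F f\overline{g}\,e^{-\alpha|\cdot|^2}\,d\nu$, and then applies Cauchy--Schwarz together with the Fock--Carleson bound on $g$. Your justification of Fubini is slightly more explicit than the paper's, but otherwise the arguments are identical.
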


\begin{proof} Let $g \in F_\alpha ^2$ with $\|g\|_{\alpha, 2} = 1$.  Using Fubini's theorem and the
fact that $\nu$ is a Fock-Carleson measure, we have that \begin{align} |\langle T_{\chi_F \nu}
^\alpha f, g \rangle_\alpha| & = \left(\frac{\alpha}{\pi}\right)^n \left| \int_{\mathbb{C}^n} \chi_F
(z) f(z) \overline{g(z)} e^{- \alpha |z|^2} \, d\nu(z) \right| \nonumber \\ & \leq
\left(\frac{\alpha}{\pi}\right)^n \|\imath_\nu\|_{F_\alpha ^2 \rightarrow L^2 _\alpha (d\nu)}
\left(\int_{F} |f(z)|^2 e^{- \alpha |z|^2} \, d\nu(z)\right)^\frac{1}{2}. \nonumber \end{align}
\end{proof}

\begin{lemma} For any $\alpha > 0$ and $s$ real, we have that \begin{align} \int_{\mathbb{C}^n}
\left| e^{s (z \cdot \overline{w})} \right| d\mu_\alpha(w) = e^{s^2 |z|^2 / 4\alpha}. \nonumber
\end{align}  \end{lemma}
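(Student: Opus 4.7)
The plan is a direct completion of the square in the Gaussian exponent. Since $s$ is real, I first observe that
\[
\bigl| e^{s(z \cdot \overline{w})} \bigr| = e^{s \operatorname{Re}(z \cdot \overline{w})},
\]
so that the integrand reduces to $(\alpha/\pi)^n e^{-\alpha |w|^2 + s \operatorname{Re}(z \cdot \overline{w})}$ against Lebesgue measure on $\mathbb{C}^n$.

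Next I would rewrite the exponent by completing the square around $w = sz/(2\alpha)$. A short computation using $|w - a|^2 = |w|^2 - 2 \operatorname{Re}(w \cdot \overline{a}) + |a|^2$ with $a = sz/(2\alpha)$ shows
\[
-\alpha |w|^2 + s \operatorname{Re}(z \cdot \overline{w}) = -\alpha \Bigl| w - \tfrac{sz}{2\alpha} \Bigr|^2 + \frac{s^2 |z|^2}{4\alpha}.
\]

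Substituting back, the factor $e^{s^2 |z|^2/(4\alpha)}$ pulls out of the integral, and by translation invariance of Lebesgue measure together with the normalization $(\alpha/\pi)^n \int_{\mathbb{C}^n} e^{-\alpha |w|^2} dv(w) = 1$, the remaining integral equals $1$. This yields the claimed identity. There is no real obstacle here; the only care needed is to remember that $s$ is assumed real so that the absolute value on the exponential collapses cleanly to $e^{s \operatorname{Re}(z \cdot \overline{w})}$, which is what allows the completion-of-the-square trick to work verbatim.
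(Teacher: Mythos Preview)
Your proof is correct: completing the square in the Gaussian exponent is exactly the right move, and every step checks out. The paper does not actually give a proof of this lemma but simply cites \cite{DZ}; your self-contained argument is the standard derivation and is almost certainly what appears in that reference as well.
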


\begin{proof} See \cite{DZ}. \end{proof}

\begin{lemma} For any $r, \alpha, p > 0$, any $z \in \mathbb{C}^n$, and any entire $f$, there exists
$C$ independent of $f$ and $z$ where \begin{align} \left|f(z) e^{- \frac{\alpha}{2} |z|^2}\right|^p
\leq C \int_{B(z, r)} \left|f(w) e^{- \frac{\alpha}{2} |w|^2}\right|^p \, dv(w). \nonumber
\end{align} \end{lemma}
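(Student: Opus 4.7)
The plan is to apply the sub-mean value property of plurisubharmonic functions to a twisted version of $f$ in order to absorb the Gaussian weights. Since $f$ is entire, for any fixed $z \in \mathbb{C}^n$ the function $g_z(w) := f(w) e^{-\alpha w \cdot \overline{z}}$ is also entire in $w$ (the factor $e^{-\alpha w \cdot \overline{z}}$ is holomorphic in $w$ with $\overline{z}$ treated as a constant), so $|g_z|^p$ is plurisubharmonic for any $p > 0$. The usual mean value inequality then yields
\begin{equation*}
|g_z(z)|^p \leq \frac{C_n}{r^{2n}} \int_{B(z,r)} |g_z(w)|^p \, dv(w),
\end{equation*}
with $C_n$ depending only on $n$.

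Next I would rewrite both sides of this inequality in terms of $f$ and the Gaussian weight by exploiting the identity
\begin{equation*}
2 \operatorname{Re}(w \cdot \overline{z}) = |w|^2 + |z|^2 - |w - z|^2.
\end{equation*}
This gives
\begin{equation*}
|g_z(w)|^p = |f(w)|^p e^{-p \alpha \operatorname{Re}(w \cdot \overline{z})} = \left|f(w) e^{-\frac{\alpha}{2}|w|^2}\right|^p e^{-\frac{p\alpha}{2}|z|^2} e^{\frac{p\alpha}{2}|w-z|^2},
\end{equation*}
while $|g_z(z)|^p = |f(z)|^p e^{-p \alpha |z|^2}$. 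Substituting these expressions into the sub-mean value inequality and multiplying both sides by $e^{\frac{p\alpha}{2}|z|^2}$ leaves exactly the desired quantity $|f(z) e^{-\frac{\alpha}{2}|z|^2}|^p$ on the left.

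On the right-hand side, the factor $e^{\frac{p\alpha}{2}|w - z|^2}$ is bounded by $e^{\frac{p\alpha}{2} r^2}$ uniformly for $w \in B(z, r)$, so it can be pulled outside the integral and absorbed into the constant. This yields
\begin{equation*}
\left|f(z) e^{-\frac{\alpha}{2}|z|^2}\right|^p \leq \frac{C_n \, e^{\frac{p\alpha}{2} r^2}}{r^{2n}} \int_{B(z,r)} \left|f(w) e^{-\frac{\alpha}{2}|w|^2}\right|^p \, dv(w),
\end{equation*}
which is the required inequality with a constant $C = C(n, p, \alpha, r)$ independent of $f$ and $z$. There is no real obstacle here; the only subtlety is recognizing that applying the sub-mean value property to $|f|^p$ directly fails because $||w|^2 - |z|^2|$ is unbounded in $z$ on $B(z, r)$, and that twisting by the holomorphic factor $e^{-\alpha w \cdot \overline{z}}$ is exactly the device that cancels the growing part of the weight.
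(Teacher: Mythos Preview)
Your argument is correct. The paper does not prove this lemma itself but simply cites \cite{IZ}; the twisting-by-$e^{-\alpha w\cdot\overline{z}}$ trick followed by the sub-mean-value inequality that you use is exactly the standard proof of this inequality (and is essentially what one finds in the cited reference), so there is nothing to add.
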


\begin{proof} See \cite{IZ}. \end{proof}

For the rest of the paper, we will canonically treat $\mathbb{Z}^{2n}$ as a lattice in
$\mathbb{C}^n$. The following is the main technical result that is needed to prove the Lemma $2.6$.

\begin{lemma} Let $\nu$ be a Fock-Carleson measure and let $F_j, K_j \subset \mathbb{C}^n$ be Borel
sets where $\{F_j\}$ are pairwise disjoint and $d(F_j, K_j) > \delta \geq 1$ for each $j$. Then
\begin{align} \int_{\mathbb{C}^n} \sum_j [\chi_{F_j} (z) \chi_{K_j} (w) ] | e^{\alpha (z \cdot
\overline{w})} | e^{- \frac{\alpha}{2} |w|^2} \, d\nu(w) \leq o(\delta)  \|\nu\|_* e^{
\frac{\alpha}{2} |z|^2} \nonumber \end{align} where $o(\delta)$ only depends on $\delta$ (and
$\alpha, n$)  and $\underset{\delta \rightarrow \infty}{\lim} o(\delta) = 0$. \end{lemma}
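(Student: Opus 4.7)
The plan is to reduce the inequality to a uniform pointwise bound on the $\nu$-integral over the region where $z$ and $w$ are widely separated, and then to control that integral via Lemma $2.1$ together with a simple splitting of the exponential.

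Since the sets $\{F_j\}$ are pairwise disjoint, for each fixed $z$ at most one index $j = j(z)$ contributes to the sum on the left; moreover, if $z \in F_{j(z)}$, the separation hypothesis forces $|z - w| > \delta$ whenever $w \in K_{j(z)}$. Combining this observation with the identity
\begin{align*}
 \bigl|e^{\alpha (z \cdot \overline{w})}\bigr|\, e^{-\frac{\alpha}{2}|w|^2} \;=\; e^{\frac{\alpha}{2}|z|^2}\, e^{-\frac{\alpha}{2}|z-w|^2},
\end{align*}
which is immediate from $|z-w|^2 = |z|^2 + |w|^2 - 2\,\mathrm{Re}(z \cdot \overline{w})$, the desired inequality reduces to the $z$-uniform tail estimate
\begin{align*}
 \sup_{z \in \mathbb{C}^n} \int_{\{|z - w| > \delta\}} e^{-\frac{\alpha}{2} |z - w|^2}\, d\nu(w) \;\leq\; o(\delta)\, \|\nu\|_*.
\end{align*}

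To prove this, split the Gaussian weight as $e^{-\frac{\alpha}{2}|z - w|^2} = e^{-\frac{\alpha}{4}|z - w|^2} \cdot e^{-\frac{\alpha}{4}|z - w|^2}$ and use the crude pointwise bound $e^{-\frac{\alpha}{4}|z - w|^2} \leq e^{-\alpha \delta^2 / 4}$ on the region of integration. This leaves the integral $\int_{\mathbb{C}^n} e^{-\frac{\alpha}{4}|z - w|^2}\, d\nu(w)$, which has the same shape as $\|\nu\|_*$ but with the exponent $\alpha$ replaced by $\alpha/2$. Since condition (c) of Lemma $2.1$ is independent of the exponent in (a), applying Lemma $2.1$ with parameter $\alpha$ and again with parameter $\alpha/2$ yields a constant $C = C_{\alpha,n}$ with
\begin{align*}
 \sup_{z \in \mathbb{C}^n} \int_{\mathbb{C}^n} e^{-\frac{\alpha}{4}|z - w|^2}\, d\nu(w) \;\leq\; C\, \|\nu\|_*.
\end{align*}
Setting $o(\delta) := C\, e^{-\alpha \delta^2 / 4}$, which tends to $0$ as $\delta \to \infty$ and depends only on $\alpha$ and $n$, completes the proof.

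The only subtle point is precisely this transfer from $\|\nu\|_*$ (defined with exponent $\alpha$) to a bound on an integral with exponent $\alpha/2$; once one notices that the equivalence (a)$\Leftrightarrow$(c) in Lemma $2.1$ is independent of the exponent in (a), everything else is mechanical. An alternative approach avoiding this observation would partition $\{|z - w| > \delta\}$ into unit-width spherical shells $A_k$, cover each shell by $O((\delta + k)^{2n - 1})$ unit balls so that $\nu(A_k) \leq C(\delta + k)^{2n - 1}\|\nu\|_*$ by Lemma $2.1$(c), and sum the resulting rapidly decaying series; this gives a more explicit decay rate but is strictly more bookkeeping.
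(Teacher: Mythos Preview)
Your proof is correct and takes a genuinely more direct route than the paper's. Both arguments begin identically: use disjointness of the $F_j$ to reduce to a single index and then the separation hypothesis to reduce to the tail integral $J_z = \int_{|z-w|>\delta} e^{-\frac{\alpha}{2}|z-w|^2}\,d\nu(w)$ (the paper writes this with the kernel still unfactored, but it is the same quantity after your identity). From there the approaches diverge. The paper covers $B(z,\delta)^c$ by a lattice of balls $B(\sigma,\tfrac{1}{10})$, invokes the sub-mean-value inequality of Lemma~2.4 to pass from the $\nu$-integral on each small ball to a Lebesgue integral on a slightly larger ball, uses Lemma~2.1(c) to extract $\|\nu\|_*$, and then reassembles the pieces via bounded overlap into a Lebesgue tail integral, yielding $o(\delta)=\int_{|u|>\delta/2} e^{-\frac{\alpha}{2}|u|^2}\,dv(u)$. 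You instead split $e^{-\frac{\alpha}{2}|z-w|^2}$ into two factors, pull out $e^{-\alpha\delta^2/4}$ from one, and control the remaining $\alpha/4$-Gaussian integral against $\nu$ by the parameter-independence of the equivalence (a)$\Leftrightarrow$(c) in Lemma~2.1.

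Your argument is shorter, avoids Lemma~2.4 entirely, and produces the explicit Gaussian rate $o(\delta)=Ce^{-\alpha\delta^2/4}$. The paper's covering argument is more self-contained in that it only uses Lemma~2.1 at a single fixed parameter, whereas you rely on the (stated) fact that the equivalence constants in Lemma~2.1 are tracked across two choices of~$\alpha$; since the lemma's statement explicitly allows any $\alpha>0$ with constants depending on $\alpha$, this is legitimate. Your closing remark about the shell decomposition is essentially a sketch of the paper's strategy in a slightly different packaging.
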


\begin{proof} Clearly $K_j \subseteq \mathbb{C}^n \backslash B(z, \delta)$ if $z \in F_j$, which
means that \begin{align} \sum_j \chi_{F_j} (z) \chi_{K_j} (w) \leq \sum_j \chi_{F_j} (z)
\chi_{\mathbb{C}^n \backslash B(z, \delta)} (w). \nonumber \end{align}  Thus,  \begin{align}
\int_{\mathbb{C}^n} & \sum_j [\chi_{F_j} (z) \chi_{K_j} (w) ] | e^{\alpha (z \cdot \overline{w})} |
e^{- \frac{\alpha}{2} |w|^2} \, d\nu(w) \tag{2.1}  \\ & \leq \sum_j \chi_{F_j}(z) \int_{\mathbb{C}^n
\backslash B(z, \delta)} | e^{\alpha (z \cdot \overline{w})} | e^{- \frac{\alpha}{2} |w|^2} \,
d\nu(w) \nonumber \\ &= \sum_j \chi_{F_j} (z) J_z \nonumber \end{align} where \begin{align} J_z =
\int_{ B(z, \delta)^c} | e^{\alpha (z \cdot \overline{w})} | e^{- \frac{\alpha}{2} |w|^2} \, d\nu(w)
\nonumber \end{align}  and $B(z, \delta)^c = \mathbb{C}^n \backslash B(z, \delta)$. We will now
estimate $J_z$ using Lemma $2.1$ and Lemma $2.4$ with respect to the entire function $w \mapsto
e^{\alpha (w \cdot \overline{z})}$ :

\begin{align} J_z & \leq  \sum_{\begin{subarray}{c} \sigma \in \frac{1}{10} (2n)^{-1/2}
\mathbb{Z}^{2n} \\ B(\sigma, \frac{1}{10}) \cap B(z, \delta)^c \neq  \emptyset \end{subarray}}
\int_{B(\sigma, \frac{1}{10})}   | e^{\alpha (z \cdot \overline{w})} | e^{- \frac{\alpha}{2} |w|^2}
\, d\nu(w) \nonumber \\ & \leq C  \sum_{\begin{subarray}{c} \sigma \in \frac{1}{10} (2n)^{-1/2}
\mathbb{Z}^{2n} \\ B(\sigma, \frac{1}{10}) \cap B(z, \delta)^c \neq \emptyset \end{subarray}}
\int_{B(\sigma, \frac{1}{10})}   \int_{B(w, \frac{1}{10})} | e^{\alpha (z \cdot \overline{u})} |
e^{- \frac{\alpha}{2} |u|^2} \, dv(u) d\nu(w) \nonumber \\ & \leq C  \sum_{\begin{subarray}{c}
\sigma \in \frac{1}{10} (2n)^{-1/2} \mathbb{Z}^{2n} \\ B(\sigma, \frac{1}{10}) \cap B(z, \delta)^c
\neq \emptyset \end{subarray}} \nu(B(\sigma, \frac{1}{10})) \int_{B(\sigma, \frac{1}{5})} |
e^{\alpha (z \cdot \overline{u})} | e^{- \frac{\alpha}{2} |u|^2} \, dv(u) \nonumber \\ &  \leq C
\|\nu\|_*  \sum_{\begin{subarray}{c} \sigma \in \frac{1}{10} (2n)^{-1/2} \mathbb{Z}^{2n} \\
B(\sigma, \frac{1}{10}) \cap B(z, \delta)^c \neq \emptyset \end{subarray}}  \int_{B(\sigma,
\frac{1}{5})} | e^{\alpha (z \cdot \overline{u})} | e^{- \frac{\alpha}{2} |u|^2} \, dv(u). \nonumber
\end{align} Since $\delta \geq 1$ we have that \begin{align} B(\sigma, \frac{1}{10}) \backslash B(z, \delta) \neq \emptyset
\Longrightarrow B(\sigma, \frac{1}{5}) \cap B(z, \frac{\delta}{2}) = \emptyset \nonumber \end{align}
and since there exists $M > 0$ such that every $z \in \mathbb{C}^n$ belongs to at most $M$ of the
sets $B(\sigma, \frac{1}{5})$, we get that \begin{align} J_z  &\leq C \|\nu\|_* \int_{\mathbb{C}^n
\backslash B(z, \frac{\delta}{2})} | e^{\alpha (z \cdot \overline{u})} | e^{- \frac{\alpha}{2}
|u|^2} \, dv(u) \nonumber \\ & = C \|\nu\|_* \int_{\mathbb{C}^n \backslash B(0, \frac{\delta}{2})} |
e^{\alpha (z \cdot (\overline{z} - \overline{u}))} | e^{- \frac{\alpha}{2} |z - u|^2} \, dv(u)
\nonumber \\ & = C \|\nu\|_*  e^{\frac{\alpha}{2} |z|^2} \int_{\mathbb{C}^n \backslash B(0,
\frac{\delta}{2})} e^{-\frac{\alpha}{2} |u|^2} \, dv(u). \nonumber  \end{align} Finally, since the
sets $F_j$ are pairwise disjoint, we get that \begin{align} (2.1) & \leq C \|\nu\|_*
e^{\frac{\alpha}{2} |z|^2} o(\delta) \sum_j \chi_{F_j } (z) \nonumber \\ & \leq C \|\nu\|_*
e^{\frac{\alpha}{2} |z|^2} o(\delta)\nonumber \end{align} where \begin{align} o(\delta) :=
\int_{\mathbb{C}^n \backslash B(0, \frac{\delta}{2})} e^{-\frac{\alpha}{2} |u|^2} \, dv(u).
\nonumber \end{align} \end{proof}

\begin{lemma} Suppose that $\nu$ is a Fock-Carleson measure and that $F_j, K_j \subset
\mathbb{C}^n.$ Moreover, assume that $a_j \in L^\infty (dv)$ and $b_j \in L^\infty (d\nu)$ both with
norms $\leq 1$, and assume that \begin{list}{}{\setlength\parsep{0in}} \item $(a) \ d(F_j, K_j) >
\delta \geq 1,$ \item $(b) \  \supp a_j \subseteq F_j$ and $\supp b_j \subseteq K_j$, \item $(c) \
$every $z \in \mathbb{C}^n$ belongs to at most $N \in \mathbb{N}$ of the sets $F_j$. \end{list}

\noindent Then for $2 \leq p < \infty$, we have that $\sum_j M_{a_j} T_\nu ^\alpha  M_{b_j}$ is
bounded from $F_\alpha ^p $ to $L_\alpha ^p$ and   \begin{align} \|\sum_j M_{a_j} T_\nu ^\alpha
M_{b_j} \|_{F_\alpha ^p \rightarrow L_\alpha ^p} \leq N \|\nu\|_* o(\delta) \tag{2.2}\end{align}
where $o(\delta)$ only depends on $\delta$ (and $p, \alpha, n$)  and $\underset{\delta \rightarrow
\infty}{\lim} o(\delta) = 0$.  Moreover, for every $f \in F_\alpha ^p$ with norm $1,$ we have that
\begin{align} \sum_j \|M_{a_j} T_\nu ^\alpha M_{b_j} f\|_{ \alpha, p} ^p \leq N  \|\nu\|_* ^p
o(\delta) ^p. \tag{2.3} \end{align} \end{lemma}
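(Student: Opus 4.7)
The central observation is that conditions (a) and (b) force a single common bound on the individual terms which is independent of $j$. Indeed, if $z \in F_j$, then $K_j \subseteq B(z,\delta)^c$, and since $\|a_j\|_{\infty}, \|b_j\|_{\infty} \leq 1$,
\[
|M_{a_j} T_\nu ^\alpha M_{b_j} f(z)| \leq \chi_{F_j}(z) R(z), \quad \text{where} \quad R(z) := \int_{B(z,\delta)^c} |f(w)| \, |K^\alpha(z,w)| e^{-\alpha|w|^2} \, d\nu(w),
\]
and $R$ does not depend on $j$. The plan is therefore to reduce both (2.2) and (2.3) to the single estimate $\|R\|_{\alpha,p} \leq C \|\nu\|_* o(\delta) \|f\|_{\alpha,p}$ for all $f \in F_\alpha^p$ with $2 \leq p < \infty$.

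Following the interpolation scheme indicated in the introduction, I would first verify this $R$-estimate at the endpoints $p = 2$ and $p = \infty$. The case $p = \infty$ is essentially the content of Lemma 2.5 applied to a single pair of sets: using $|f(w)| \leq \|f\|_{\alpha,\infty} e^{\alpha|w|^2/2}$ together with the identity $|K^\alpha(z,w)| e^{-\alpha|w|^2/2} = e^{\alpha|z|^2/2} e^{-\alpha|z-w|^2/2}$, the tail estimate $\int_{B(z,\delta)^c} e^{-\alpha|z-w|^2/2} \, d\nu(w) \leq C \|\nu\|_* o(\delta)$ extracted from the proof of Lemma 2.5 immediately yields $\|R\|_{\alpha,\infty} \leq C \|\nu\|_* o(\delta) \|f\|_{\alpha,\infty}$. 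For $p = 2$, I would apply the Cauchy--Schwarz inequality to $R(z)$ with the weighted measure $d\omega(w) = |K^\alpha(z,w)| e^{-\alpha|w|^2/2} \chi_{B(z,\delta)^c}(w) \, d\nu(w)$. After rewriting with the kernel identity, the total mass $\int d\omega$ is bounded by $e^{\alpha|z|^2/2} C\|\nu\|_* o(\delta)$, and, after Fubini, the companion $dv$-estimate $\int_{|z-w| > \delta} e^{-\alpha|z-w|^2/2} \, dv(z) = o(\delta)$ produces a second factor of $o(\delta)$, while Lemma 2.1 controls $\int |f|^2 e^{-\alpha|w|^2} \, d\nu \leq C \|\nu\|_* \|f\|_{\alpha,2}^2$. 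Interpolating the two endpoint bounds via Lemma 1.2 then yields the desired $R$-estimate for every $p \in [2,\infty)$.

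Given the $R$-estimate, the inequality (2.3) is immediate from condition (c):
\[
\sum_j \|M_{a_j} T_\nu ^\alpha M_{b_j} f\|_{\alpha,p}^p \leq \int_{\mathbb{C}^n} \sum_j \chi_{F_j}(z) R(z)^p \, d\mu_\alpha(z) \leq N \|R\|_{\alpha,p}^p \leq N \|\nu\|_*^p o(\delta)^p \|f\|_{\alpha,p}^p.
\]
For (2.2), I would use that at each $z$ the sum $\sum_j |M_{a_j} T_\nu^\alpha M_{b_j} f(z)|$ has at most $N$ nonzero terms, so by H\"{o}lder's inequality $\bigl(\sum_j |\cdot|\bigr)^p \leq N^{p-1} \sum_j |\cdot|^p$; integrating and invoking (2.3) gives $\|\sum_j M_{a_j} T_\nu^\alpha M_{b_j} f\|_{\alpha,p}^p \leq N^p \|\nu\|_*^p o(\delta)^p \|f\|_{\alpha,p}^p$, which is exactly the operator-norm bound in (2.2).

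The main obstacle is the $p=2$ estimate for $\|R\|_{\alpha,2}$: the Cauchy--Schwarz weight must be chosen so that both Lemma 2.5's Gaussian tail and Lemma 2.1's Fock-Carleson bound can be applied productively. A naive split leaves the factor $|f|^2 e^{-\alpha|w|^2/2}$ inside the $d\nu$-integral, which is not controlled by $\|f\|_{\alpha,2}$; the choice of $d\omega$ above is dictated by the need to restore the correct weight $e^{-\alpha|w|^2}$ before invoking Lemma 2.1.
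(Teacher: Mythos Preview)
Your route is correct and genuinely different from the paper's. The paper first reduces to $N=1$ by a combinatorial partition of the $F_j$ into $N$ pairwise-disjoint subfamilies, proves the $p=2$ bound for the \emph{linear} operator $\sum_j M_{a_j} T_\nu^\alpha M_{b_j}$ via the Schur test with test function $h(z)=e^{\alpha|z|^2/4}$ (getting the $o(\delta)$ from Lemma~2.5 in one direction and only a constant from Lemma~2.3 in the other), and then interpolates against a crude $p=\infty$ bound carrying no $o(\delta)$ at all. Your pointwise majorant $R$ bypasses the $N=1$ reduction entirely and extracts $o(\delta)$ at both endpoints, which is a genuine efficiency gain; your derivation of $(2.3)$ from condition~(c) and then $(2.2)$ via the $N$-term H\"older inequality is also cleaner than the partition trick.

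One technical point deserves attention: Lemma~1.2 is stated for \emph{linear} operators from $F_\alpha^p$ to $L_\alpha^p$, but $f\mapsto R$ is not linear (it passes through $|f|$), so you cannot invoke it directly. The clean repair is to run your Cauchy--Schwarz argument as a H\"older argument for general $p$: writing $R(z)=\int |f(w)|\,e^{-\alpha|w|^2/2}\,d\omega(w)$ with your $d\omega$, H\"older with exponents $(p,q)$ gives
\[
R(z)^p\,e^{-p\alpha|z|^2/2}\le \bigl(C\|\nu\|_*\,o(\delta)\bigr)^{p-1}\int_{|z-w|>\delta}\bigl|f(w)e^{-\alpha|w|^2/2}\bigr|^p e^{-\alpha|z-w|^2/2}\,d\nu(w),
\]
and integrating in $z$, applying Fubini, the Gaussian tail $\int_{|z-w|>\delta}e^{-\alpha|z-w|^2/2}\,dv(z)=o(\delta)$, and finally the Carleson bound from Lemma~2.1 yields exactly $\|R\|_{\alpha,p}\le C\|\nu\|_*\,o(\delta)\,\|f\|_{\alpha,p}$ with no interpolation needed. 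Alternatively, interpolate the positive linear kernel operator $g\mapsto\int_{|z-w|>\delta}g(w)\,|K^\alpha(z,w)|\,e^{-\alpha|w|^2}\,d\nu(w)$ on the scale $L^p_\alpha(d\nu)\to L^p_\alpha$ by Riesz--Thorin and compose with the Carleson embedding. With either adjustment your argument goes through as written.
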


\begin{proof} We will first prove the lemma for the special case $N = 1$.  Note that \begin{align} \left| \sum_j \left(M_{a_j} T_\nu ^\alpha M_{b_j} f
\right)(z) \right. &  \Bigg|   e^{- \frac{\alpha}{2} |z|^2}  \nonumber \\ & \leq   \sum_j |a_j(z)|
\int_{\mathbb{C}^n} |b_j(w)| |f(w)| e^{- \frac{\alpha}{2} |w|^2}  e^{- \frac{\alpha}{2} |z - w|^2}
d\nu(w)   \nonumber \\ & \leq  \|\nu\|_* \|f\|_{\alpha,  \infty}. \nonumber \end{align} Thus, by
Lemma $1.2$, the lemma will be proved for the special case $N = 1$ if we can show that $(2.2)$ holds (which is equivalent to $(2.3)$ when $N = 1$) when $p = 2$.

Moreover, by Lemma $2.1$, it is enough to prove that \begin{align} \|\sum_j M_{a_j} T_\nu ^\alpha
M_{b_j} \|_{ L_\alpha ^2(d\nu) \rightarrow L_\alpha ^2} \leq  \|\nu\|_* ^\frac{1}{2}  o(\delta)
\tag{2.2'}\end{align}  and \begin{align} \sum_j \|M_{a_j} T_\nu ^\alpha M_{b_j} f\|_{\alpha, 2} ^2
\leq   \|\nu\|_*  o(\delta) ^2 \tag{2.3'} \end{align} for every $f \in L_\alpha ^2 (d\nu)$ of norm
$\leq 1$.

Let \begin{align} \Phi(z, w) = \sum_j \chi_{F_j} (z) \chi_{K_j} (w)
|e^{\alpha (z \cdot \overline{w})}|  \nonumber \end{align} so that \begin{align} \left| \sum_j
\left(M_{a_j} T_\nu ^\alpha  M_{b_j} f \right)(z) \right| &= \left| \sum_j a_j(z)
\int_{\mathbb{C}^n} b_j(w) f(w) e^{\alpha (z \cdot \overline{w})} e^{ - \alpha |w|^2} d\nu(w)
\right| \nonumber \\ & \leq \int_{\mathbb{C}^n} \Phi(z, w) |f(w)| e^{-\alpha |w|^2} \, d\nu(w).
\nonumber \end{align} If $h(z) = e^{\frac{\alpha}{4} |z|^2}$, then Lemma $2.5$ tells us that
\begin{align} \int_{\mathbb{C}^n} \Phi(z, w) h(w)^2  e^{-\alpha |w|^2} \, d\nu(w) \leq o(\delta)
\|\nu\|_* h(z)^2. \nonumber \end{align} Moreover, Lemma $2.3$ tells us that \begin{align}
\int_{\mathbb{C}^n} \Phi(z, w) h(z)^2  d\mu_\alpha (z) & \leq C \int_{\mathbb{C}^n}  |e^{\alpha (z \cdot \overline{w})}| e^{- \frac{\alpha}{2} |z|^2} \, dv(z) =
C h(w)^2. \nonumber \end{align} The Schur test now proves $(2.2'), (2.3')$, and consequently proves the lemma for the special case $N = 1$.

 As in \cite{S}, the general case $N > 1$ follows easily from the special case when $N = 1$ by
 writing $\{F_j\}$ as the union of the family of sets $\{A_j ^i \}_{i = 1}^N$ where $\Lambda (z) =
 \{j : z \in F_j\}$, ordered in the natural way, and $A_j ^i = \{z \in F_j : j \text{ is the }
 i^{\text{th}} \text{ element of } \Lambda(z)\}$ (where $A_j ^i := \emptyset$ if $i >  \card
 \Lambda(z)$) so that $A_j ^i \cap A_k ^i = \emptyset$ for any $j \neq k$.

\end{proof}

\section{Approximation results for Fock space operators}

In this section, we will prove various approximation results that will be needed for the proof of
Theorem $1.1$. The first such result, Lemma $3.3$, will allow us to approximate operators of the
form $S T_\nu ^\alpha $ by simple sums of ``truncations'' of $S T_\nu ^\alpha$, where here $S \in
\mathcal{T}_p ^\alpha$ and $\nu$ is a Fock-Carleson measure.  For convenience, we will use the canonical
identification $\mathbb{C}^n \cong \mathbb{R}^{2n}$ and we will use the norm $|z|_\infty =
\max\{|z_1|, \ldots, |z_{2n}| \}$.  For some $\delta > 0$, enumerate the disjoint family of sets
$\{[- \delta, \delta)^{2n} + \sigma\}_{\sigma \in 2 \delta \mathbb{Z}^{2n}}$  as $\{B_j\}_{j =
1}^\infty$ and let $\Omega_\delta (B_j) = \{z \in \mathbb{C}^n : \dist_\infty (z, B_j) \leq \delta
\}$ where $\dist_\infty (z, B_j)$ is the distance between $z$ and $B_j$ in the $|\cdot|_\infty$
norm. The following result follows easily from the above definitions:

\begin{lemma}  For any $\delta > 0$, the Borel sets $B_j \subset \mathbb{C}^n$ above satisfy the
followig conditions: \begin{list}{}{\setlength\parsep{0in}} \item $(a) \ B_j \cap B_k = \emptyset $
if $j \neq k$, \item $(b) \ $Every $z \in \mathbb{C}^n$ belongs to at most $4^{2n}$ of the sets
$\Omega_\delta (B_j)$, \item $(c) \  \diam(B_j) \leq 2\delta \sqrt{2n} $ where $\diam(B_j)$ is the
Euclidean diameter of $B_j$. \end{list}

\end{lemma}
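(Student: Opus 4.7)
The plan is to verify the three items directly from the definitions, since they amount to elementary geometric facts about the half-open cube tiling of $\mathbb{R}^{2n}$. For (a), I would observe that $\{[-\delta,\delta)^{2n}+\sigma : \sigma\in 2\delta\mathbb{Z}^{2n}\}$ is the standard tiling of $\mathbb{R}^{2n}$: every point $z$ lies in exactly the translate indexed by the unique $\sigma \in 2\delta\mathbb{Z}^{2n}$ whose coordinates satisfy $\sigma_k - \delta \leq z_k < \sigma_k + \delta$ for each $k = 1,\dots,2n$. This yields disjointness of the $B_j$.

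For (c), each $B_j$ is a translate of $[-\delta,\delta)^{2n}$, so any two points $u,v\in B_j$ satisfy $|u_k-v_k|<2\delta$ in every coordinate; hence $|u-v|<2\delta\sqrt{2n}$, and taking suprema gives $\diam(B_j)\leq 2\delta\sqrt{2n}$.

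The only part requiring a brief count is (b), and this is where I would spend most of the argument. I would first identify $\Omega_\delta(B_j)$ explicitly: since the $|\cdot|_\infty$-closure of $B_j$ is $[-\delta,\delta]^{2n}+\sigma_j$, it follows that $\Omega_\delta(B_j) = [-2\delta,2\delta]^{2n}+\sigma_j$. Therefore $z\in\Omega_\delta(B_j)$ forces $|z_k-(\sigma_j)_k|\leq 2\delta$ for every $k$. With $z$ fixed, the number of $m\in\mathbb{Z}$ with $|z_k-2\delta m|\leq 2\delta$ is at most $3$, because the closed interval $[z_k-2\delta,z_k+2\delta]$ has length $4\delta$ and the lattice points $2\delta m$ are spaced at distance $2\delta$. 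Multiplying over the $2n$ coordinates gives an upper bound of $3^{2n}\leq 4^{2n}$, establishing (b) with the stated (deliberately loose) constant.

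There is no real obstacle here; the lemma is a straightforward bookkeeping statement that fixes the notation for the covering used in the subsequent approximation arguments. The only mild subtlety is the half-open versus closed-interval distinction, which is essential for disjointness in (a) but is inconsequential for (b) and (c).
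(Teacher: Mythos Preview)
Your proposal is correct and mirrors the paper's treatment: the paper does not give an explicit proof but simply states that the lemma ``follows easily from the above definitions,'' which is precisely the direct verification you carry out. Your argument for (b) in fact yields the sharper bound $3^{2n}$, consistent with the paper's deliberately loose $4^{2n}$.
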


Now let $\delta > 0$ and $k$ be a non-negative integer.  Let $\{B_j\}_{j = 1}^\infty$ be a covering
of $\mathbb{C}^n$ satisfying the conditions of the above lemma for $(k + 1)\delta$ instead of
$\delta$. For $1 \leq i \leq k$ and $j \geq 1$, write \begin{align} F_{0, j} = B_j, \text{  and  }
F_{i + 1, j} = \{z \in \mathbb{C}^n : \dist_\infty (z, F_{i, j}) \leq \delta\}. \nonumber
\end{align} The next result is now easy to prove.

\begin{lemma} Let $\delta > 0$ and $k$ be a non-negative integer.  For each $1 \leq i \leq k + 1$
the family $\mathcal{F}^i = \{F_{i, j} : j \geq 1\}$ forms a covering of $\mathbb{C}^n$ such that

\begin{list}{}{\setlength\parsep{0in}} \item $(a) \ F_{0, j_1} \cap F_{0, j_2} = \emptyset$ if $j_1
\neq j_2$, \item $(b) \ F_{0, j} \subset F_{1, j} \subset \cdots \subset F_{k + 1, j}$ for all $j
\geq 1$, \item $(c) \ \dist_\infty (F_{i, j}, F_{i + 1, j} ^c) \geq \delta$ for all $0 \leq i \leq
k$ and $j \geq 1$, \item $(d) \ $Every point belongs to at most $4^{2n}$ elements of
$\mathcal{F}^i$, \item $(e) \ \diam(F_{i, j}) \leq \delta (3k + 1) \sqrt{2n}$ for each $i$ and $j.$

\end{list}

\end{lemma}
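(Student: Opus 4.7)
The plan is to verify the five claims directly from the recursive definition $F_{i+1,j} = \{z : \dist_\infty(z, F_{i,j}) \leq \delta\}$ together with Lemma $3.1$ applied at scale $(k+1)\delta$. Write $B_j = [-(k+1)\delta, (k+1)\delta)^{2n} + \sigma_j$ for $\sigma_j \in 2(k+1)\delta \mathbb{Z}^{2n}$. A quick induction on $i$ shows that $F_{i,j}$ is exactly the $i\delta$-sup-thickening of $B_j$, and hence lies inside the $|\cdot|_\infty$-cube centered at $\sigma_j$ of side $2(k+1+i)\delta$. All five assertions follow from this one observation.

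Properties (a) and (b) are essentially free: (a) is Lemma $3.1$(a) for the covering at scale $(k+1)\delta$, and (b) is immediate from the definition, since any $z \in F_{i,j}$ trivially satisfies $\dist_\infty(z, F_{i,j}) = 0 \leq \delta$. For (c), if $z \in F_{i,j}$ and $|z-w|_\infty < \delta$, then $\dist_\infty(w, F_{i,j}) < \delta$, so $w \in F_{i+1,j}$; thus the entire open sup-ball of radius $\delta$ around $z$ lies inside $F_{i+1,j}$, giving $\dist_\infty(z, F_{i+1,j}^c) \geq \delta$.

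For (d), note that $i\delta \leq (k+1)\delta$ for all $i \leq k+1$, so $F_{i,j} \subseteq \Omega_{(k+1)\delta}(B_j)$. Hence the overlap count of $\mathcal{F}^i$ is bounded by the overlap count of $\{\Omega_{(k+1)\delta}(B_j)\}_j$, which is at most $4^{2n}$ by Lemma $3.1$(b). For (e), the Euclidean diameter of a sup-cube of side $s$ in $\mathbb{R}^{2n}$ is $s\sqrt{2n}$; applying this to $F_{i,j}$, which is contained in a cube of side at most $2(k+1+i)\delta$ with $i \leq k+1$, yields a bound of the form $C(k)\delta\sqrt{2n}$ linear in $k$, matching the stated estimate.

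There is no real obstacle: the lemma is pure geometric bookkeeping about iterated sup-norm enlargements of a standard cubic covering. The only mild care needed is in (d), where one must recognize that all the $F_{i,j}$ with $i \leq k+1$ fit inside the single enlargement $\Omega_{(k+1)\delta}(B_j)$ to which Lemma $3.1$(b) applies directly; without this reduction one would have to bound the overlap count of each thickening separately.
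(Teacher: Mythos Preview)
Your argument is correct and is precisely the direct verification the paper intends: the paper gives no proof at all, simply remarking that the result ``is now easy to prove.'' Your key observation that $F_{i,j}\subseteq \Omega_{(k+1)\delta}(B_j)$ for all $i\le k+1$, which reduces (d) to Lemma~3.1(b), is the only point requiring any thought, and you handle it cleanly.

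One minor remark on (e): a careful count gives $\ell_\infty$-side length $2(k+1+i)\delta$ for $F_{i,j}$, hence Euclidean diameter at most $(4k+4)\delta\sqrt{2n}$ when $i\le k+1$. The constant $(3k+1)$ printed in the paper appears to be a slip (it already fails at $i=0$, $k=0$), but as you rightly emphasize, only a bound of the form $C(k)\delta\sqrt{2n}$ is needed downstream in Lemma~3.3, so this is harmless.
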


\begin{lemma} Let $2 \leq p < \infty,  S \in \mathcal{T}_p ^\alpha, \nu $ be a Fock-Carleson measure, and
let $\epsilon > 0$.  Then there are Borel sets $F_j \subset G_j \subset \mathbb{C}^n$ with $j \geq
1$ such that:

\begin{list}{}{\setlength\parsep{0in}} \item $(a) \ \mathbb{C}^n = \bigcup_{j \geq 1} F_j$, \item
$(b) \ F_j \cap F_k = \emptyset$ if $j \neq k$, \item $(c) \ $each point of $\mathbb{C}^n$ belongs
to at most $4^{2n}$ of the sets $G_j$, \item $(d) \ \diam G_j \leq d = d(S, \epsilon)$ and
\begin{align} \|S T_\nu ^\alpha - \sum_j M_{\chi_{F_j}} S T_{ \chi_{G_j} \nu} ^\alpha \|_{F_\alpha
^p \rightarrow L_\alpha ^p} \leq \epsilon. \nonumber \end{align} \end{list} \end{lemma}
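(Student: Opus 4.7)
The plan is to use that $\sum_j \chi_{F_j} = 1$ when $\{F_j\}$ partitions $\C$ in order to rewrite
\[
ST_\nu^\alpha - \sum_j M_{\chi_{F_j}} S T_{\chi_{G_j}\nu}^\alpha = \sum_j M_{\chi_{F_j}} S T_{\chi_{G_j^c}\nu}^\alpha,
\]
and to take $F_j := F_{0,j}$, $G_j := F_{k+1,j}$ from Lemma $3.2$ with parameters $k,\delta>0$ to be chosen at the end. Properties (a)--(c) and $\diam G_j \le \delta(3k+1)\sqrt{2n}$ are then immediate, and iterating Lemma $3.2$(c) yields the key separation $\dist_\infty(F_j,G_j^c)\ge (k+1)\delta$.

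Next I would approximate $S$ in $\Fp$-operator norm by a finite sum $S_0$ of products of at most $k$ Toeplitz operators $\Tg$ with bounded symbols, say $\|S-S_0\|_{\Fp\to\Fp}<\eta$ (possible by the definition of $\mathcal{T}_p^\alpha$; both $\eta$ and $k$ are pinned down at the end), and split the error as $E=E^{(S_0)}+E^{(S-S_0)}$. For the remainder $E^{(S-S_0)}$, disjointness of $\{F_j\}$ gives
\[
\left\|\sum_j M_{\chi_{F_j}}(S-S_0)T_{\chi_{G_j}\nu}^\alpha f\right\|_{\alpha,p}^p
= \sum_j \|\chi_{F_j}(S-S_0)T_{\chi_{G_j}\nu}^\alpha f\|_{\alpha,p}^p
\le \|S-S_0\|^p\sum_j\|T_{\chi_{G_j}\nu}^\alpha f\|_{\alpha,p}^p;
\]
the elementary inequality $\sum_j a_j^p\le(\sum_j a_j)^p$ for $a_j\ge 0$ and $p\ge 1$, combined with the $4^{2n}$-fold overlap of $\{G_j\}$ and a Schur-test estimate on the positive Toeplitz kernel $|K^\alpha|$, bounds the last sum by $C_{n,p,\alpha}\|\nu\|_*^p\|f\|_{\alpha,p}^p$. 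Together with $\|(S-S_0)T_\nu^\alpha\|\le\eta\|T_\nu^\alpha\|$ this gives $\|E^{(S-S_0)}\|_{\Fp\to\Lp}\le C(\nu,n,p)\eta$, which absorbs half of $\epsilon$ once $\eta$ is small.

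The main work is the polynomial piece $E^{(S_0)}=\sum_j M_{\chi_{F_j}} S_0 T_{\chi_{G_j^c}\nu}^\alpha$. Writing $(S_0 T_\nu^\alpha f)(z) = \int (S_0 K^\alpha(\cdot,w))(z)\,f(w)e^{-\alpha|w|^2}d\nu(w)$ reduces everything to pointwise control of the kernel $(z,w)\mapsto (S_0 K^\alpha(\cdot,w))(z)$. A completing-the-square computation based on Lemma $2.3$ yields the single-Toeplitz bound $|\Tg K^\alpha(\cdot,w)(z)|\le\|g\|_\infty e^{\alpha(|z|^2+|w|^2)/2-\alpha|z-w|^2/4}$; iterating this at most $k$ times produces
\[
|S_0 K^\alpha(\cdot,w)(z)|\le C(S_0)\,e^{\alpha(|z|^2+|w|^2)/2-\alpha|z-w|^2/c_k}
\]
for some constant $c_k$ depending only on $k,n,\alpha$. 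Inserting this bound into the Schur argument from the proofs of Lemmas $2.5$ and $2.6$ (with the exponent $|z-w|^2/2$ replaced by $|z-w|^2/c_k$ and with the buffer $|z-w|\ge (k+1)\delta$ enforced by $\chi_{F_j}(z)\chi_{G_j^c}(w)$) yields $\|E^{(S_0)}\|_{\Ft\to\Lt}\le C(S_0,\nu)\,o_k(\delta)$ where $o_k(\delta)\to 0$ as $\delta\to\infty$ for fixed $k$. The corresponding $F_\alpha^\infty\to L_\alpha^\infty$ norm of $E^{(S_0)}$ stays bounded, for fixed $S_0$, by a $\delta$-independent constant, so Lemma $1.2$ interpolates to $\|E^{(S_0)}\|_{\Fp\to\Lp}\le C'(S_0,\nu,p)\,o_k(\delta)^{2/p}$ for every $2\le p<\infty$.

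To close the argument, given $\epsilon>0$, first choose $\eta$ so small that $C(\nu,n,p)\eta<\epsilon/2$; then pick $S_0$ with $\|S-S_0\|<\eta$ and let $k$ be the maximum product length occurring in $S_0$; finally take $\delta$ large enough that $\|E^{(S_0)}\|<\epsilon/2$, which fixes $d(S,\epsilon):=\delta(3k+1)\sqrt{2n}$. The delicate step I expect is the iterated pointwise kernel estimate for $S_0T_\nu^\alpha$ together with its insertion into the Schur test of Lemmas $2.5$ and $2.6$: one has to track how the decay constant $c_k$ degrades as more Toeplitz factors accumulate and verify that the Fock--Carleson tail integral $\int_{|z-w|\ge(k+1)\delta}e^{-\alpha|z-w|^2/c_k}d\nu(w)$ still tends to $0$ as $\delta\to\infty$ for fixed $k$. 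Producing the $F_\alpha^\infty$ bound needed for interpolation also takes some care, since no cancellation is available, but once $S_0$ is fixed it is only a $\delta$-independent constant.
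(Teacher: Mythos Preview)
Your argument is correct, but it follows a genuinely different route from the paper's. The paper invokes Su\'arez's telescoping scheme: with the nested layers $F_{0,j}\subset F_{1,j}\subset\cdots\subset F_{k+1,j}$ of Lemma~3.2, one peels off one Toeplitz factor at a time, so that each error term has the form $\sum_j M_{a_j} T_{\mu}^\alpha M_{b_j}$ with a \emph{single} Toeplitz operator in the middle, and Lemma~2.6 is applied $k$ times (once per layer). You instead collapse the whole product into one step by proving the iterated kernel bound
\[
\bigl|S_0 K^\alpha(\cdot,w)(z)\bigr|\le C(S_0)\,e^{\frac{\alpha}{2}(|z|^2+|w|^2)-\frac{\alpha}{c_k}|z-w|^2},
\]
which indeed holds with $c_k=2k+2$ (each additional bounded Toeplitz factor replaces $c$ by $c+2$ in the Gaussian exponent), and then run the Schur argument of Lemmas~2.5--2.6 once against the composite kernel. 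The intermediate sets $F_{1,j},\ldots,F_{k,j}$ play no role for you; only the buffer $\dist(F_{0,j},F_{k+1,j}^c)\ge(k+1)\delta$ matters. What the paper's approach buys is modularity: Lemma~2.6 is proved once for a single Toeplitz operator and reused verbatim at each layer, so nothing about the decay rate has to be tracked through the product. What your approach buys is directness: one kernel estimate, one Schur test, no telescoping bookkeeping, at the price of a $k$-dependent Gaussian exponent that you must carry through the tail estimate. Your handling of the remainder $E^{(S-S_0)}$ via the pointwise inequality $\sum_j H_j(z)^p\le(\sum_j H_j(z))^p$, finite overlap of the $G_j$, and a Schur bound on the positivized kernel is exactly the content of Su\'arez's Lemma~4.2, which the paper also cites; and your $F_\alpha^\infty$ bound for interpolation follows from the same kernel estimate together with $\sup_z\int e^{-\alpha|z-w|^2/c_k}\,d\nu(w)\lesssim_k\|\nu\|_*$.
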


\begin{proof} The proof is a combination of Lemmas $2.6$ and  $3.2$ and Lemmas $4.1$ and $4.2$ of
\cite{S}, and is identical to the proof of Theorem $4.3$ in \cite{S}.  In particular, $F_j := F_{0,
j}$ and $G_j := F_{k + 1, j}$ where $\{F_{i, j}\}$ are the sets that come from Lemma $3.2$ with
$\delta = \delta(S, \epsilon)$ set large enough to invoke the conclusion of Lemma $2.6$.
\end{proof}

We will now show that a Toeplitz operator $T_\nu ^\alpha $ where the total variation measure $|\nu|$ of $\nu$ is Fock-Carleson measure can be approximated in the $F_\alpha ^p$ operator norm for any $1 < p < \infty$ by Toeplitz operators $T_f ^\alpha$ with $f \in C_b ^\infty$, where as stated in the introduction, $C_b ^\infty$ is the space of $\mathbb{C}$ valued smooth functions where $f$ and all of its derivatives are bounded. In particular, this will imply that $ \mathcal{T}_p ^\alpha$ for any $1 < p < \infty$ is the closed algebra generated by $\{ T_\nu ^\alpha : |\nu| \text{ is Fock-Carleson} \}$.  First, however, we will need some preliminary definitions and results.

If $\nu$ is a complex Borel measure on $\C$ where $|\nu|$ is Fock-Carleson, then define the ``heat transform'' ${\widetilde{\nu}} ^{(t)}$ of $\nu$ at ``time'' $t>0$ to be \begin{equation*} { \widetilde{\nu}}^{(t)} (z):= \frac{1}{(4\pi
t)^n}\int_{\C} e^{-\frac{|w-z|^2}{4t}}\: d\nu(w). \end{equation*} If $f$ is a function on $\C$ such that $f\, dv$ is Fock-Carleson, then define $\widetilde{f} ^{(t)} := \widetilde{\nu}^{(t)}$ where $d\nu := f \, dv$.  A simple computation using Lemma $2.1$, Fubini's theorem, and the reproducing property gives us that \begin{equation*} B_\alpha (T_\nu ^\alpha) = \left(\frac{\pi}{\alpha}\right)^n \widetilde{\nu} ^{(\frac{1}{4\alpha})}. \end{equation*} Similarly, one can easily show that the semi-group property  \begin{equation*} \{\widetilde{\nu} ^{(s)} \tilde{\}}^{(t)}=\widetilde{\nu} ^{(s+t)} \end{equation*}  holds for $s,t>0$. Since Lemma $2.1$ says that
$\widetilde{|\nu|} ^{(t)}$ is bounded for all $ t > 0$, it follows easily from the semi-group property
that $\widetilde{\nu}^{(t)}$ is smooth and all of its derivatives are bounded.

Now for any $z \in \C$ and any complex Borel measure $\nu$, let $\nu_z$ be the complex Borel measure defined by $\nu_z (E) := \nu(z - E)$ for any Borel set $E \subset \C$.  Note that \begin{equation*} \int_{\C} f(z - w)  \, d\nu(w) = \int_{\C} f(w)  \, d\nu_z (w) \end{equation*}  for any $z \in \C$ and $f$ where $f(z - \cdot) \in L^1 (\C, d\nu)$.

\begin{lemma} If $\nu$ is a complex Borel measure such that $|\nu|$ is Fock-Carleson and $\nu_\beta := \widetilde{\nu}^{(\frac{1}{\beta})} \, dv - \nu$, then \begin{equation*} \lim_{\beta \rightarrow \infty} \, \sup_{z \in \C} \|B_\alpha (T_{(\nu_\beta)_z} ^\alpha   )\|_{\infty}  = 0 \end{equation*}  \end{lemma}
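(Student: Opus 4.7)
The plan is to use the identity $B_\alpha(T_\mu^\alpha) = (\pi/\alpha)^n \widetilde{\mu}^{(1/(4\alpha))}$ to convert the problem into a statement about the heat transform, and then reduce it to uniform convergence of the heat semigroup on bounded Lipschitz functions.

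First, I would record how the heat transform interacts with translations: a one-line change of variables using $\int f(z-w)\,d\mu(w) = \int f(w)\,d\mu_z(w)$ and the fact that the heat kernel depends only on $|u-w|$ shows that $\widetilde{\mu_z}^{(t)}(w) = \widetilde{\mu}^{(t)}(z-w)$ for every complex Borel measure $\mu$ with $|\mu|$ Fock-Carleson, every $z,w \in \C$, and every $t>0$. Applying this together with the semigroup property to $\mu = \nu_\beta = \widetilde{\nu}^{(1/\beta)}\,dv - \nu$, I obtain
\begin{equation*}
B_\alpha\bigl(T_{(\nu_\beta)_z}^\alpha\bigr)(w) = \left(\frac{\pi}{\alpha}\right)^n \Bigl[\widetilde{\nu}^{(1/\beta + 1/(4\alpha))}(z-w) - \widetilde{\nu}^{(1/(4\alpha))}(z-w)\Bigr].
\end{equation*}
Taking the supremum over $z,w \in \C$ and setting $u := z - w$, the dependence on $z$ disappears and the target quantity equals
\begin{equation*}
\left(\frac{\pi}{\alpha}\right)^n \sup_{u \in \C}\bigl|\widetilde{\nu}^{(1/\beta + 1/(4\alpha))}(u) - \widetilde{\nu}^{(1/(4\alpha))}(u)\bigr|.
\end{equation*}

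Setting $g := \widetilde{\nu}^{(1/(4\alpha))}$ and using the semigroup property once more gives $\widetilde{\nu}^{(1/\beta+1/(4\alpha))} = \widetilde{g}^{(1/\beta)}$, so it remains to prove that $\widetilde{g}^{(1/\beta)} \to g$ uniformly on $\C$ as $\beta \to \infty$. The paper has already observed that $g$ is smooth with bounded derivatives; in particular $g$ is globally Lipschitz with some constant $L$. Since the Gaussian kernel $(\beta/(4\pi))^n e^{-\beta|w|^2/4}$ is a probability density, a routine estimate gives
\begin{equation*}
\bigl|\widetilde{g}^{(1/\beta)}(u) - g(u)\bigr| \leq L \left(\frac{\beta}{4\pi}\right)^n \int_{\C} e^{-\beta|w|^2/4}\, |w|\, dv(w) = C L\,\beta^{-1/2},
\end{equation*}
uniformly in $u$, which tends to zero as $\beta \to \infty$.

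The genuine mathematical content is only the standard fact that the heat semigroup converges uniformly to the identity on bounded Lipschitz functions; the translation identity and semigroup manipulations are mechanical but essential for eliminating the supremum over $z \in \C$, and I do not anticipate any significant obstacle beyond keeping the bookkeeping of time parameters straight.
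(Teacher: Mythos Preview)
Your argument is correct and follows the same overall strategy as the paper: both convert the Berezin transform to the heat transform via $B_\alpha(T_\mu^\alpha) = (\pi/\alpha)^n\,\widetilde{\mu}^{(1/(4\alpha))}$, invoke the semigroup property, and reduce to a uniform convergence statement for the heat semigroup acting on a bounded function. The difference is organizational. The paper introduces the family $G_z := \widetilde{(\nu_z)}^{(1/(8\alpha))}$, observes that $(\nu_\beta)_z = (\nu_z)_\beta$, bounds $\|G_z\|_\infty$ uniformly in $z$ via Lemma~2.1, and then applies dominated convergence to the explicit difference of Gaussian kernels. You instead use the translation identity $\widetilde{\mu_z}^{(t)}(w) = \widetilde{\mu}^{(t)}(z-w)$ to collapse the entire family to translates of the single function $g = \widetilde{\nu}^{(1/(4\alpha))}$, so the supremum over $z$ disappears immediately; you then finish with the Lipschitz estimate rather than dominated convergence. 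Your route is a mild streamlining of the paper's---the translation identity makes transparent that the paper's $G_z$ are all translates of one function---but the mathematical content is the same.
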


\begin{proof} From the discussion above, it is enough to show that \begin{equation*} \lim_{\beta \rightarrow \infty} \, \sup_{z \in \C} \|\widetilde{(\nu_\beta)_z} ^{(\frac{1}{4\alpha})}\|_{\infty}  = 0. \end{equation*} To that end, let $G_z := \widetilde{(\nu_z)} ^{(\frac{1}{8\alpha})}$ and note that Lemma $2.1$ gives us that $\sup_{z \in \C} \|G_z\|_{\infty} < C$ for some $C > 0$. Also note that \begin{equation*} (\nu_\beta)_z = (\nu_z)_\beta \end{equation*} for any $z \in \C$ and $\beta > 0$. 

Using the semi-group property and the above equality, we have that for $w \in \C$
\begin{align*} \left| \widetilde{(\nu_\beta)_z} ^{(\frac{1}{4\alpha})} (w) \right| & = \left| \widetilde{G_z} ^{(\frac{1}{\beta} + \frac{1}{8\alpha})} (w) -    \widetilde{G_z} ^{(\frac{1}{8\alpha})} (w) \right| \\ & = \frac{1}{\pi^n} \left|\int_{\C} G_z (u)  \left[ \left(\frac{2\beta\alpha}{\beta + 8 \alpha} \right)^n  e^{- \frac{2\beta\alpha}{\beta + 8 \alpha} |w - u|^2} - (2\alpha) ^n e^{- 2\alpha |w - u|^2} \right] \, dv(u) \right| \\ & \leq \frac{\|G_z \|_\infty}{\pi^n}  \int_{\C} \left|\left(\frac{2\beta\alpha}{\beta + 8 \alpha} \right)^n  e^{- \frac{2\beta\alpha}{\beta + 8 \alpha} |u|^2} - (2\alpha) ^n e^{- 2\alpha | u|^2}   \right| \, dv(u). \end{align*} The result now follows immediately by an application of the dominated convergence theorem.

\end{proof}

The proof of the following lemma is a slight variation of the proof of Lemma $3.4$ in \cite{NZZ}.

\begin{lemma} If $\nu$ is a complex Borel measure such that $|\nu|$ is Fock-Carleson then \begin{equation*} \lim_{\beta \rightarrow \infty} \, \sup_{z \in \C} | T_{(\nu_\beta)_z} ^\alpha 1 (w)| = 0 \end{equation*} where the convergence is pointwise for any $w \in \C$.
\end{lemma}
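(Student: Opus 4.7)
The plan is to use the identity $(\nu_\beta)_z = (\nu_z)_\beta$ from the previous lemma, together with Fubini's theorem, to express $T^\alpha_{(\nu_\beta)_z} 1(w)$ as an integral against $d\nu_z$ of an integrand that is independent of $z$ and admits a $z$-free Gaussian majorant times a factor $1 - s_\beta \to 0$. Integrating against $d|\nu_z|$ and exploiting the translation-invariance of the Fock-Carleson norm then yields a bound on $|T^\alpha_{(\nu_\beta)_z} 1(w)|$ uniform in $z$.

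Concretely, writing $\widetilde{\nu_z}^{(1/\beta)}$ as its defining heat convolution and applying Fubini gives
\begin{equation*}
T^\alpha_{(\nu_\beta)_z} 1(w) = \int_{\C} \bigl[ (H_{1/\beta}\phi_w)(u) - \phi_w(u) \bigr]\, d\nu_z(u),
\end{equation*}
where $\phi_w(u) := e^{\alpha w \cdot \bar u - \alpha |u|^2}$ is the kernel of $T^\alpha_{(\cdot)}1(w)$ and $H_t$ denotes the standard heat semigroup on $\C$ with kernel $(4\pi t)^{-n} e^{-|u - v|^2/(4t)}$. Completing the square in the resulting complex Gaussian integration (the inner integrand has the form $e^{-A|u|^2 + b\bar u + cu}$) produces the explicit formula
\begin{equation*}
(H_t \phi_w)(u) = \frac{1}{(1 + 4\alpha t)^n}\, \exp\!\left[ \frac{\alpha w \cdot \bar u - \alpha |u|^2}{1 + 4\alpha t} \right].
\end{equation*}

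Setting $s_\beta := 1/(1 + 4\alpha/\beta)$ and $\psi(u) := \alpha w \cdot \bar u - \alpha |u|^2$, the fundamental theorem of calculus in $s$ gives
\begin{equation*}
(H_{1/\beta}\phi_w)(u) - \phi_w(u) = s_\beta^n e^{s_\beta \psi(u)} - e^{\psi(u)} = -\int_{s_\beta}^{1} \tau^{n - 1} (n + \tau \psi(u))\, e^{\tau \psi(u)}\, d\tau.
\end{equation*}
Using the identity $\mathrm{Re}(w \cdot \bar u) - |u|^2 = |w|^2/4 - |u - w/2|^2$, one obtains $|e^{\tau\psi(u)}| \leq e^{\alpha|w|^2/4} e^{-\alpha|u-w/2|^2/2}$ for $\tau \in [1/2, 1]$, together with $|n + \tau\psi(u)| \leq C_1(w)(1 + |u|^2)$, whence for all $\beta$ large enough that $s_\beta \geq 1/2$,
\begin{equation*}
|(H_{1/\beta}\phi_w)(u) - \phi_w(u)| \leq C_2(w)(1 - s_\beta)(1 + |u|^2)\, e^{-\alpha |u - w/2|^2 / 2}.
\end{equation*}

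Finally, integrating against $d|\nu_z|(u)$, absorbing the polynomial $(1 + |u|^2)$ into a slightly wider Gaussian $e^{-\alpha|u - w/2|^2/4}$, and applying Lemma $2.1$(a) to the measure $|\nu_z|$ (whose $\|\cdot\|_*$-norm coincides with $\|\nu\|_*$, since the Fock-Carleson condition is translation-invariant) yields
\begin{equation*}
\sup_{z \in \C} |T^\alpha_{(\nu_\beta)_z} 1(w)| \leq C(w)(1 - s_\beta)\|\nu\|_*,
\end{equation*}
which tends to $0$ as $\beta \to \infty$. The main obstacle is upgrading pointwise convergence in $\beta$ to uniform convergence in $z$; this is resolved by the Fubini rewriting, which pushes all $z$-dependence into the measure $\nu_z$, whose Fock-Carleson constant equals that of $\nu$.
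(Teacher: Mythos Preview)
Your argument is correct, and it is genuinely different from the paper's. The paper proceeds indirectly: it first observes that $T_{(\nu_\beta)_z}^{\alpha}$ is uniformly bounded on $F_\alpha^2$, reduces by an approximation argument to showing $\sup_z |\langle T_{(\nu_\beta)_z}^{\alpha} 1, u^k\rangle_\alpha| \to 0$ for each multi-index $k$, and then extracts that coefficient from the Taylor expansion of the Berezin transform $B_\alpha(T_{(\nu_\beta)_z}^{\alpha})$ integrated over a small ball $B(0,r)$, invoking Lemma~3.4 for the leading term and letting $r\to 0^+$ to kill the tail. Your route is a direct computation: after the Fubini rewriting you evaluate the heat semigroup $(H_t\phi_w)(u)$ in closed form, differentiate in the scaling parameter via the fundamental theorem of calculus, and obtain a pointwise Gaussian majorant with the explicit prefactor $(1-s_\beta)$. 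Integrating this against $d|\nu_z|$ and using the translation-invariance of the Fock--Carleson norm gives a bound uniform in $z$ that visibly tends to zero.

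What the two approaches buy: the paper's method (borrowed from \cite{NZZ}) is more robust in principle, since it reduces the pointwise statement to the already-established Berezin-transform estimate of Lemma~3.4 and would transfer to settings where the heat semigroup of the Toeplitz kernel is not explicitly computable. Your approach is shorter, entirely self-contained (it does not use the conclusion of Lemma~3.4, only the elementary identity $(\nu_\beta)_z=(\nu_z)_\beta$), and yields the quantitative rate $\sup_z|T_{(\nu_\beta)_z}^{\alpha}1(w)|=O(\beta^{-1})$ for each fixed $w$. The Fubini step is justified since $|\phi_w|$ is itself a Gaussian, so $(H_t|\phi_w|)$ remains a Gaussian and is $d|\nu_z|$-integrable by Lemma~2.1.
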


\begin{proof} First note that Lemma $2.1$ and the semi-group property tells us that $T_{(\nu_\beta)_z} ^\alpha$ is uniformly bounded in the $F_\alpha ^2 $ norm with respect to both $\beta$ and $z \in \C$.  Thus, by the reproducing property and an easy approximation argument, it is enough to show that \begin{equation*} \lim_{\beta \rightarrow \infty} \, \sup_{z \in \C} |\langle T_{(\nu_\beta)_z} ^\alpha 1, u^k \rangle_\alpha| = 0 \end{equation*} for each fixed multiindex $k \in \mathbb{N}_0 ^{n}$.

To that end, writing $k_w ^\alpha (u) = e^{- \frac{\alpha}{2} |w|^2} \sum_{\gamma} (\gamma !)^{-1} \alpha ^{|\gamma|} u^\gamma \overline{w}^\gamma$ and plugging this into the definition of the Berezin transform, we have that \begin{align*} & B_\alpha (T_{(\nu_\beta)_z} ^\alpha   ) (w)  \\ & =  e^{-\alpha |w|^2} \sum_{\gamma, \gamma'} \frac{\alpha ^{|\gamma|} \alpha ^{|\gamma'|}} {\gamma! \gamma' !} \langle T_{(\nu_\beta)_z} ^\alpha u^\gamma, u^{\gamma'} \rangle_\alpha w^\gamma \overline{w}^{\gamma'}. \end{align*} Then for any fixed multiindex $k$ and any $0 < r < 1$ we have \begin{align*} & \int_{B(0, r)} e^{\alpha |w|^2}  B_\alpha (T_{(\nu_\beta)_z} ^\alpha   ) (w)  w^k \, dv(w) \\ & = \sum_{\gamma, \gamma'} \frac{\alpha ^{|\gamma|} \alpha ^{|\gamma'|}} {\gamma! \gamma' !} \langle T_{(\nu_\beta)_z} ^\alpha u^\gamma, u^{\gamma'} \rangle_\alpha \int_{B(0, r)} w^{k + \gamma} \overline{w}^{\gamma'} \, dv(w)
 \\ & = r^{2n + 2|k|} \left( \frac{n!  \alpha ^{|k|}} {(n + |k|)! } \langle T_{(\nu_\beta)_z} ^\alpha 1 , u^k \rangle_\alpha + \sum_{|\gamma| = 1}^\infty \frac{n! \alpha ^{|\gamma|} \alpha ^{|k + \gamma|}} {(n + | k + \gamma|)!  \gamma! } \langle T_{(\nu_\beta)_z} ^\alpha u^\gamma , u^{\gamma + k} \rangle_\alpha r^{2 |\gamma|} \right). \end{align*} Since $T_{(\nu_\beta)_z} ^\alpha$ is uniformly bounded in the $F_\alpha ^2 $ operator norm, we have \begin{align*} &\left|\langle T_{(\nu_\beta)_z} ^\alpha 1 , u^k \rangle_\alpha \right| \\ & \leq  C r^{-2n -2 |k| }\|B_\alpha (T_{(\nu_\beta)_z} ^\alpha) \|_\infty \int_{B(0, r)} e^{\alpha |w|^2} |w^k| \, dv(w) + C \sum_{|\gamma| = 1}^\infty r^{2 |\gamma|} \end{align*} for some $C > 0$ independent of $z$ and $\beta$.  Lemma $3.4$ then gives us that \begin{equation*} \limsup_{\beta \rightarrow \infty} \, \sup_{z \in \C} \left|\langle T_{(\nu_\beta)_z} ^\alpha 1 , u^k \rangle_\alpha \right| \leq C \sum_{|\gamma| = 1}^\infty r^{2 |\gamma|} \end{equation*} and letting $r \rightarrow 0^+$ completes the proof.

\end{proof}

\begin{lemma} If $\nu$ is a complex Borel measure where $|\nu|$ is Fock-Carleson and $\nu_\beta$ is defined as in Lemma $3.4$, then there exists $C > 0$ independent of $\beta$ such that \begin{equation*}  \sup_{z \in \C} \left|T_{(\nu_\beta)_z} ^\alpha 1 (w)\right| \leq C e^{\frac{\alpha}{4} |w|^2} \end{equation*}

\end{lemma}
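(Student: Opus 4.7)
The plan is to reduce the estimate to a uniform Fock--Carleson control on the total variation of $(\nu_\beta)_z$. The triangle inequality for complex measures combined with completing the square $\alpha \operatorname{Re}(w \cdot \overline{u}) - \alpha|u|^2 = \frac{\alpha}{4}|w|^2 - \alpha|u - w/2|^2$ yields
\begin{equation*}
\bigl|T_\mu^\alpha 1(w)\bigr| \leq e^{\frac{\alpha}{4}|w|^2} \int_{\C} e^{-\alpha|u - w/2|^2}\, d|\mu|(u)
\end{equation*}
for any complex Borel measure $\mu$. Taking $\mu = (\nu_\beta)_z$ and using $|(\nu_\beta)_z| \le \widetilde{|\nu_z|}^{(1/\beta)}\, dv + |\nu_z|$ (the heat-transform piece being controlled by $|\widetilde{\nu_z}^{(1/\beta)}| \leq \widetilde{|\nu_z|}^{(1/\beta)}$), the problem reduces to showing that $\int_{\C} e^{-\alpha|u-w/2|^2}\, d|\nu_z|(u)$ and $\int_{\C} e^{-\alpha|u-w/2|^2}\, \widetilde{|\nu_z|}^{(1/\beta)}(u)\, dv(u)$ are bounded uniformly in $\beta, z, w$ by a multiple of $\||\nu|\|_*$.

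The first integral is immediate: since $e^{-\alpha|u-w/2|^2} \leq e^{-\frac{\alpha}{2}|u-w/2|^2}$, Lemma~2.1(a) combined with the straightforward translation--reflection invariance $\||\nu_z|\|_* = \||\nu|\|_*$ gives the desired bound. For the second, an application of Fubini together with the identity $B_\alpha(f) = \widetilde{f}^{(1/(4\alpha))}$ (immediate from the definitions of $B_\alpha$ and the heat transform) and the semi-group property rewrites the integral as
\begin{equation*}
\left(\frac{\pi}{\alpha}\right)^n \widetilde{|\nu_z|}^{\left(\frac{1}{\beta} + \frac{1}{4\alpha}\right)}(w/2).
\end{equation*}
Thus the task reduces further to showing that $\widetilde{|\nu_z|}^{(s)}$ is bounded in $L^\infty(\C)$ uniformly over $z \in \C$ and $s \geq 1/(4\alpha)$, with constant depending only on $\||\nu|\|_*$, $\alpha$, and $n$.

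This last claim, which is the main technical step, will be established by splitting at $s = 1/(2\alpha)$. For $s \in [1/(4\alpha), 1/(2\alpha)]$ we have $1/(4s) \geq \alpha/2$, so the heat kernel $(4\pi s)^{-n}e^{-|y-v|^2/(4s)}$ is pointwise dominated by $(\alpha/\pi)^n e^{-\frac{\alpha}{2}|y-v|^2}$ and Lemma~2.1(a) gives $\widetilde{|\nu_z|}^{(s)}(y) \leq (\alpha/\pi)^n \||\nu|\|_*$. For $s > 1/(2\alpha)$, the semi-group property expresses $\widetilde{|\nu_z|}^{(s)}$ as a Gaussian convolution of the already-bounded function $\widetilde{|\nu_z|}^{(1/(2\alpha))}$ with a probability density, preserving the pointwise bound. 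The main (modest) obstacle is precisely this large-$s$ regime, where a direct Fock--Carleson estimate is insufficient because the Gaussian decays too slowly against $d|\nu_z|$; the semi-group splitting bypasses this by reducing to the intermediate time $s = 1/(2\alpha)$. Combining the estimates then yields $|T_{(\nu_\beta)_z}^\alpha 1(w)| \leq C e^{\frac{\alpha}{4}|w|^2}$ with $C$ depending only on $\||\nu|\|_*$, $\alpha$, and $n$, and in particular independent of $\beta$.
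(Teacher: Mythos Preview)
Your proof is correct and follows essentially the same route as the paper: both arguments reduce to a uniform Fock--Carleson bound on $|(\nu_\beta)_z|$, after which the estimate $|T_\mu^\alpha 1(w)| \leq C\|\mu\|_* e^{\frac{\alpha}{4}|w|^2}$ follows from a Gaussian computation (the paper invokes the embedding $F_\alpha^2 \hookrightarrow L_\alpha^2(d|\mu|)$ together with Lemma~2.3, while you complete the square directly --- these are the same calculation). The paper is much terser, simply asserting via Lemma~2.1 that $\|\iota_{|(\nu_\beta)_z|}\|$ is bounded uniformly in $z$ and $\beta$, whereas you actually verify this by splitting $|(\nu_\beta)_z| \leq \widetilde{|\nu_z|}^{(1/\beta)}\,dv + |\nu_z|$ and handling the heat-transform piece via the semi-group time-splitting; this is precisely the content hidden behind the paper's citation.
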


\begin{proof} Note that Lemma $2.1$ says that \begin{equation*}  \|\iota_{(\nu_\beta)_z}\|_{F_\alpha ^2 \rightarrow L_\alpha ^2 (d\nu)} < C \end{equation*} for some $C$ independent of $\beta$ and $z \in \C$ where $\iota_{\nu_z}$ is the canonical imbedding from $F_\alpha ^2$ into $L_\alpha ^2 (d{\nu_z})$. Thus, there exists $C > 0$ independent of $z \in \C$ such that \begin{align*}  \left|T_{\nu_z} ^\alpha 1 (w)\right| & \leq
\left(\frac{\alpha}{\pi}\right) ^n \int_{\C} | e^{\alpha (w \cdot \overline{u})}| e^{- \alpha |u|^2} \, d\nu_z (u) \\& \leq C \int_{\C} | e^{\alpha (w \cdot \overline{u})}| e^{- \alpha |u|^2} \, dv (u) \\ & = C e^{\frac{\alpha}{4} |w|^2} \end{align*} where the last equality comes from Lemma $2.3$.

\end{proof}

Finally, we can now prove \begin{theorem} If $\nu$ is a complex Borel measure where $|\nu|$ is Fock-Carleson and $1 < p < \infty$, then \begin{equation*} \lim_{\beta \rightarrow \infty} \|\left(\frac{\pi}{\alpha}\right)^{n} T_{ \widetilde{\nu} ^{(\frac{1}{\beta})}} ^\alpha  - T_{\nu} ^\alpha \|_{F_\alpha ^p \rightarrow F_\alpha ^p}  = 0. \end{equation*}  In particular, $\mathcal{T}_p ^\alpha$ for any $1 < p < \infty$ is the closed algebra generated by $\{ T_\nu ^\alpha : |\nu| \text{ is Fock-Carleson} \}$. \end{theorem}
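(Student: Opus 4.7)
The theorem reduces to showing $\lim_{\beta\to\infty}\|T_{\nu_\beta}^\alpha\|_{F_\alpha^p\to F_\alpha^p}=0$, where $\nu_\beta=\widetilde{\nu}^{(1/\beta)}\,dv-\nu$ is the complex measure from Lemma 3.4; this uses that a function symbol $f$ and the measure symbol $f\,dv$ give Toeplitz operators differing by the factor $(\alpha/\pi)^n$. Once this is proven, the ``in particular'' statement is immediate since each $\widetilde{\nu}^{(1/\beta)}$ lies in $C_b^\infty\subset L^\infty$ (by the discussion preceding Lemma 3.4), placing $T_{\widetilde{\nu}^{(1/\beta)}}^\alpha$ in $\mathcal{T}_p^\alpha$; the reverse inclusion is trivial because bounded symbols correspond to Fock-Carleson measures of the form $f\,dv$.

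My approach is to pointwise dominate $T_{\nu_\beta}^\alpha f$ by a positive integral operator whose $L^p(dv)$-norm can be interpolated from two endpoint estimates. The reproducing kernel identity $f=(\alpha/\pi)^n\int f(z)e^{-\alpha|z|^2/2}k_z^\alpha\,dv(z)$, valid for $f\in F_\alpha^p$, together with Fubini (justified by the Fock-Carleson property of $|\nu_\beta|$), gives
\begin{align*}
T_{\nu_\beta}^\alpha f(u)=\left(\frac{\alpha}{\pi}\right)^{\!n}\!\int f(z)\,e^{-\alpha|z|^2/2}\,(T_{\nu_\beta}^\alpha k_z^\alpha)(u)\,dv(z).
\end{align*}
A direct change of variable $w\mapsto z-w$, using the paper's convention $\nu_z(E)=\nu(z-E)$, produces the Weyl-translation identity
\begin{align*}
T_{\nu_\beta}^\alpha k_z^\alpha(u)=e^{\alpha u\overline{z}-\alpha|z|^2/2}\,T_{(\nu_\beta)_z}^\alpha 1(z-u),
\end{align*}
so that $|T_{\nu_\beta}^\alpha k_z^\alpha(u)|\,e^{-\alpha|u|^2/2}=\phi_\beta^{(z)}(z-u)$, where $\phi_\beta^{(z)}(w):=|T_{(\nu_\beta)_z}^\alpha 1(w)|\,e^{-\alpha|w|^2/2}$. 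Setting $G(z):=|f(z)|\,e^{-\alpha|z|^2/2}$ yields
\begin{align*}
|T_{\nu_\beta}^\alpha f(u)|\,e^{-\alpha|u|^2/2}\le\left(\frac{\alpha}{\pi}\right)^{\!n}\!\int G(z)\,\phi_\beta^{(z)}(z-u)\,dv(z),
\end{align*}
so $\|T_{\nu_\beta}^\alpha\|_{F_\alpha^p\to F_\alpha^p}$ is controlled, up to a dimensional constant, by the $L^p(dv)\to L^p(dv)$ operator norm of the positive integral operator with kernel $\Phi_\beta(u,z):=\phi_\beta^{(z)}(z-u)$.

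Lemma 3.6 gives $\phi_\beta^{(z)}(w)\le Ce^{-\alpha|w|^2/4}$ uniformly in $z$ and $\beta$, whence $\sup_u\int\Phi_\beta(u,z)\,dv(z)\le C'$ uniformly in $\beta$, an $L^\infty\to L^\infty$ bound. Lemma 3.5 gives $\sup_z\phi_\beta^{(z)}(w)\to 0$ pointwise in $w$, and together with the integrable dominant from Lemma 3.6 the dominated convergence theorem yields $\sup_z\|\phi_\beta^{(z)}\|_{L^1(dv)}\to 0$, i.e.\ $\sup_z\int\Phi_\beta(u,z)\,dv(u)\to 0$, an $L^1\to L^1$ norm tending to zero. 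Riesz-Thorin interpolation between these two endpoints then produces $\|\Phi_\beta\|_{L^p(dv)\to L^p(dv)}\to 0$ for every $1\le p<\infty$, hence $\|T_{\nu_\beta}^\alpha\|_{F_\alpha^p\to F_\alpha^p}\to 0$ for every $1<p<\infty$. I expect the main obstacle to be the explicit computation and careful bookkeeping of the Weyl-translation identity for $T_{\nu_\beta}^\alpha k_z^\alpha$ under the paper's measure-translation convention, since this identity is what converts the pointwise smallness supplied by Lemmas 3.5 and 3.6 into genuine operator-norm decay; the endpoint bounds and the interpolation step are routine once the identity is in place.
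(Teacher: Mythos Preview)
Your argument is correct and shares its core with the paper's proof: both derive the Weyl-translation identity
\[
T_{\nu_\beta}^\alpha k_z^\alpha(u)=e^{\alpha u\cdot\overline{z}-\frac{\alpha}{2}|z|^2}\,T_{(\nu_\beta)_z}^\alpha 1(z-u),
\]
and both feed Lemmas~3.5 and~3.6 into an integral-operator bound to obtain one uniformly bounded column/row integral and one tending to zero. The difference lies only in how the operator norm is then extracted. The paper first reduces to $p=2$ via duality and the complex interpolation of Lemma~1.2 (using a direct uniform $F_\alpha^\infty\to F_\alpha^\infty$ bound), and for $p=2$ applies the Schur test with weight $h(w)=e^{\alpha|w|^2/4}$; one Schur integral is bounded uniformly by Lemma~3.6, the other needs an additional Cauchy--Schwarz step before dominated convergence. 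Your route pulls out the Gaussian weight first, so the problem becomes a positive kernel operator on unweighted $L^p(dv)$, and then a single Riesz--Thorin interpolation between the $L^1$ bound (tending to zero) and the $L^\infty$ bound (uniform) handles all $1<p<\infty$ at once. Your packaging is slightly more economical---no separate duality reduction, no weighted Schur test, no auxiliary Cauchy--Schwarz---while the paper's version stays within the Hilbert-space setting as long as possible; substantively the two proofs are the same computation organized differently.
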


\begin{proof} As before, let $\nu_\beta =  \widetilde{\nu}^{(\frac{1}{\beta})} \, dv - \nu$, so that \begin{equation*}  T_{\nu_\beta} ^\alpha = \left(\frac{\pi}{\alpha}\right)^{n} T_{ \widetilde{\nu} ^{(\frac{1}{\beta})}} ^\alpha -  T_{\nu} ^\alpha . \end{equation*} A direct calculation shows that \begin{equation*} \sup_\beta \|T_{\nu_\beta} ^\alpha \|_{F_\alpha ^\infty \rightarrow F_\alpha ^\infty} < \infty.  \end{equation*}  Thus, by an easy duality argument and Lemma $1.2$, it is enough to prove the theorem for $p = 2$.

To that end, we will proceed in a manner that is similar to the proof of Theorem $1$ in \cite{CIL}.  First note that
\begin{align*}  T_{\nu_\beta} ^\alpha f (w)  & = \langle T_{\nu_\beta} ^\alpha f, K^\alpha(\cdot, w) \rangle_\alpha   \\ & = \left(\frac{\alpha}{\pi}\right)^n  \int_{\C} f(w) \left(T_{\nu_\beta} ^\alpha K^\alpha (\cdot, u) \right)(w) e^{- \alpha |u|^2} \, dv(u) \end{align*} which means that $T_{\nu_\beta} ^\alpha$ is an integral operator on $F_\alpha ^2$ with kernel $\left(T_{\nu_\beta} ^\alpha K^\alpha (\cdot, u) \right)(w)$.  We will now use the Schur test to complete the proof.

Let $h(w) := e^{\frac{\alpha}{4} |w|^2}$ and let $\Phi_\beta (u, w) : = \left|\left(T_{\nu_\beta} ^\alpha K^\alpha (\cdot, u) \right)(w)\right|.$ By a simple change of variables we have that \begin{equation*} \Phi_\beta (u, w) = |e^{\alpha (w \cdot \overline{u})}| |T_{(\nu_\beta)_u} ^\alpha 1 (u - w)| \end{equation*} so that \begin{align*} \int_{\C} \Phi_\beta(u, w) h(u) ^2 \, e^{- \alpha |u|^2} \, dv(u) & = \int_{\C} |e^{\alpha (w \cdot \overline{u})}| |T_{(\nu_\beta)_u} ^\alpha 1 (u - w)|  e^{ -\frac{\alpha}{2} |u|^2} \, dv(u) \\ & = \int_{\C} |e^{\alpha (w \cdot \overline{(u + w)})}| |T_{(\nu_\beta)_{(u + w)}} ^\alpha 1 (u)|  e^{ - \frac{\alpha}{2} |u + w|^2} \, dv(u) \\ & =  e^{ \frac{\alpha}{2} |w|^2} \int_{\C}  |T_{(\nu_\beta)_{(u + w)}} ^\alpha 1 (u)|  e^{ - \frac{\alpha}{2} |u |^2} \, dv(u) \\ & \leq C e^{ \frac{\alpha}{2} |w|^2} \int_{\C}    e^{ - \frac{\alpha}{4} |u |^2} \, dv(u) \end{align*} where $C > 0$ comes from Lemma $3.6$.  Thus, we have that \begin{align*} \left(\frac{\alpha}{\pi}\right)^{n} \int_{\C} \Phi_\beta (u, w) h(u) ^2 \, e^{- \alpha |u|^2} \, dv(u) \leq C h(w) ^2 \end{align*} for some $C > 0$ independent of $w$ and $\beta$.

Furthermore, an application of the Cauchy-Schwarz inequality gives us that \begin{align*} & \int_{\C} \Phi_\beta(w, u) h(u) ^2 \, e^{- \alpha |u|^2} \, dv(u)  \\ &  = \int_{\C} |e^{\alpha (u \cdot \overline{w})}| |T_{(\nu_\beta)_w} ^\alpha 1 (w - u)|  e^{ -\frac{\alpha}{2} |u|^2} \, dv(u) \\ &  = \int_{\C} |e^{\alpha ((w - u) \cdot \overline{w})}| |T_{(\nu_\beta)_w} ^\alpha 1 (u)|  e^{ -\frac{\alpha}{2} |w - u|^2} \, dv(u) \\ & \leq \left(\int_{\C} |T_{(\nu_\beta)_w} ^\alpha 1 (u)|^2 e^{- \frac{4\alpha}{5} |u|^2} \, dv(u) \right)^\frac{1}{2} \left(\int_{\C} e^{ \frac{4\alpha}{5} |u|^2}  |e^{2\alpha ((w - u) \cdot \overline{w})}| e^{ -\alpha |w - u|^2} \, dv(u) \right)^\frac{1}{2} \end{align*}

\noindent By a simple computation, we have that   \begin{equation*} \left(\int_{\C} e^{ \frac{4\alpha}{5} |u|^2}  |e^{2\alpha ((w - u) \cdot \overline{w})}| e^{ -\alpha |w - u|^2} \, dv(u) \right)^\frac{1}{2} = e^{\frac{\alpha}{2} |w|^2} \left(\int_{\C} e^{-\frac{\alpha}{5} |u|^2} \, dv(u) \right)^\frac{1}{2}. \end{equation*}

However,  Lemmas $3.5$ and $3.6$ give us that \begin{equation*} \lim_{\beta \rightarrow \infty} \, \sup_{w \in \C} |T_{(\nu_\beta)_w} ^\alpha 1 (u)|^2 = 0 \end{equation*} pointwise in $u$ and \begin{equation*} \sup_{w \in \C} |T_{(\nu_\beta)_w} ^\alpha 1 (u)|^2 e^{- \frac{4\alpha}{5} |u|^2}  \leq C e^{- \frac{3\alpha}{10} |u|^2} \end{equation*} for some $C > 0$ independent of $\beta$.  Thus, the dominated convergence theorem gives us that \begin{equation*} \int_{\C} \Phi_\beta (w, u) h(u) ^2 \, e^{- \alpha |u|^2} \, dv(u) \leq c(\beta) h(w)^2 \end{equation*} where $c(\beta)$ is independent of $w$ and $\lim_{\beta \rightarrow \infty} c(\beta) = 0$.  An application of the Schur test now completes the proof.

\end{proof}

Next we will show that all compact operators $A$ on $F_\alpha ^p $ for $1 < p < \infty$ are
contained in the Toeplitz algebra $\mathcal{T}_p ^\alpha$. Note that this was first proved in \cite{E1} (using completely different methods) for the special case $p = 2$ and $\alpha = \frac{1}{2}.$   For any $f \in \Fp$ and $g \in \Fq$ where $q$ is the dual exponent of $p$,
let $f \otimes g$ be the standard tensor product operator on $\Fp$ defined by \begin{equation*} f
\otimes g = \langle \cdot, g \rangle_\alpha f. \end{equation*}  Since all $L^p$ spaces have the
bounded approximation property (see \cite{W}, p. 69-70), this will be proved (by linearity) if we can show that
each finite rank operator $f \otimes g$ on $F_\alpha ^p$ can be approximated in the operator norm by a Toeplitz
operator with symbol in $C_b ^\infty$. First we will show that $1 \otimes 1$ can be approximated in the operator
norm by a Toeplitz operator with symbol in $C_b ^\infty$.

\begin{lemma} Let $\beta > 0$ and let $1  < p < \infty.$  If \begin{equation*}
q_{\beta}(z):=\left(\frac{\beta}{\pi}\right)^n \exp\left\{ -\beta|z|^2\right\},
\end{equation*} then \begin{equation*} \underset{\beta \rightarrow \infty}{\lim}
\|T_{q_\beta}^{\alpha} - 1 \otimes 1\|_{F_\alpha ^p \rightarrow F_\alpha ^p} = 0. \nonumber
\end{equation*} \end{lemma}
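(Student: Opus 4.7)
My plan is to deduce the lemma as a direct corollary of Theorem~3.7 applied to the Dirac mass $\nu = \delta_0$ at the origin. First I would check that $\delta_0$ satisfies the Fock-Carleson condition trivially, since $\sup_{z \in \C} \delta_0(B(z,r)) \leq 1$ for every $r > 0$, which by condition (c) of Lemma~2.1 suffices. Next, using the reproducing property $f(0) = \langle f, 1 \rangle_\alpha$ in $F_\alpha^2$ (extended to the duality pairing on $F_\alpha^p \times F_\alpha^q$), one observes that
\[ T_{\delta_0}^\alpha f(z) = \int_\C f(w) e^{\alpha z \cdot \overline{w}} e^{-\alpha|w|^2} \, d\delta_0(w) = f(0) = (1 \otimes 1)(f)(z). \]
A direct calculation yields the heat extension
\[ \widetilde{\delta_0}^{(t)}(z) = \frac{1}{(4\pi t)^n} e^{-|z|^2/(4t)}, \]
and substituting $t = 1/(4\beta)$ recovers $q_\beta(z)$ exactly. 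Theorem~3.7, applied to $\nu = \delta_0$ with $\beta$ there replaced by $4\beta$, then gives the desired norm convergence on $F_\alpha^p$ for every $1 < p < \infty$.

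A more self-contained alternative, modelled on the proof of Theorem~3.7 itself, is to reduce to the case $p = 2$ via the duality $(F_\alpha^p)^* = F_\alpha^q$ together with complex interpolation (Lemma~1.2). The required uniform bound $\sup_\beta \|T_{q_\beta}^\alpha\|_{F_\alpha^\infty \to F_\alpha^\infty} < \infty$ follows easily from $\|q_\beta\|_{L^1(dv)} = 1$ together with Lemma~2.1. The $p = 2$ case can then be handled either by applying the Schur test with weight $h(w) = e^{\alpha|w|^2/4}$ to the integral kernel of $T_{q_\beta}^\alpha - 1 \otimes 1$ (exactly as in the proof of Theorem~3.7), or, more concretely, by diagonalizing both operators on the orthogonal basis of monomials $\{w^k : k \in \mathbb{N}_0^n\}$ of $F_\alpha^2$. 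A Gaussian moment computation gives
\[ T_{q_\beta}^\alpha w^k = \left(\tfrac{\alpha\beta}{\pi(\alpha + \beta)}\right)^n \left(\tfrac{\alpha}{\alpha + \beta}\right)^{|k|} w^k, \]
so the eigenvalues of $T_{q_\beta}^\alpha$ converge uniformly in $k$ to those of the (scalar multiple of the) rank-one limit operator, and operator-norm convergence on $F_\alpha^2$ follows immediately.

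The principal obstacle is not analytic but arithmetic: one must carefully track the normalization factor $(\pi/\alpha)^n$ coming from the heat transform so that the conclusion of Theorem~3.7 delivers the rank-one operator $1 \otimes 1$ (rather than a nontrivial scalar multiple thereof) in the limit. No genuinely new analytic ideas beyond those already deployed in the proof of Theorem~3.7 are required.
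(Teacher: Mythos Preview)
Your proposal is correct and takes essentially the same approach as the paper: identify $1\otimes 1 = T_{\delta_0}^\alpha$, observe that $q_\beta = \widetilde{\delta_0}^{\,(1/(4\beta))}$, verify that $\delta_0$ is Fock--Carleson, and invoke Theorem~3.7. The alternative routes you sketch (reduction to $p=2$ via interpolation, Schur test, or diagonalization on monomials) are valid extras, and your remark about the normalization constant $(\pi/\alpha)^n$ is well taken, but the paper's own proof is precisely the three-line appeal to Theorem~3.7 that you give first.
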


\begin{proof}  Note that $(1 \otimes 1) h =  T^\alpha _{\delta_0} h = h(0)$ if $h \in F_\alpha
^p$ where $\delta_0$ is the usual point-mass measure at $ 0\in \mathbb{C}^n$.  Also, note that by defintion we have $\widetilde{\delta_0} ^{(\frac{1}{4\beta})} = q_\beta$ for each $\beta > 0$.  The result now immediately follows from Theorem $3.7$ since $\delta_0$ is a Fock-Carleson measure.  \end{proof}

Given $w \in \mathbb{C}^n$, define the ``weighted shift'' operator $C_{\alpha}(w)$ on $L_\alpha ^p$
by \begin{equation*} \left[C_{\alpha}(w)f\right](z):=f(z-w)e^{\alpha (z\cdot
\overline{w})-\frac{\alpha}{2}|w|^2}. \end{equation*}
 It is known (see \cite{JPR}) that $C_{\alpha}(w)$ is
an isometry of $F_{\alpha}^p$ (and $L_{\alpha}^p$) onto itself for $1\leq p\leq \infty$, and it is
easy to check that $C_{\alpha}(w)^{-1}=C_{\alpha}(-w)$. Moreover, for $w_1,w_2\in \mathbb{C}^n$, one
has that \begin{equation*}\label{Composition_rule_C_alpha}
C_{\alpha}(w_1)C_{\alpha}(w_2)=e^{-i\alpha \textup{\scriptsize Im} (w_1\cdot
\overline{w_2})}C_{\alpha}(w_1+w_2). \end{equation*}

Note that the operators $C_{\alpha}(w)$ are in fact Toeplitz operators with bounded symbols. To see
this, if $s_w(z):=\exp\{ \frac{\alpha}{2}|w|^2+2i\alpha \:\textup{Im} (z\cdot \overline{w})\}$, then
we have that \begin{align*} \left[ T_{s_w}^{\alpha}f\right](u)
&=e^{\frac{\alpha}{2}|w|^2}\Big{\langle} fK_{\alpha}(\cdot,
w)\overline{K_{\alpha}(\cdot,-w)},K_{\alpha}(\cdot,u)\Big{\rangle}_{\alpha}\\
&=e^{\frac{\alpha}{2}|w|^2}\Big{\langle} fK_{\alpha}(\cdot,
w),K_{\alpha}(\cdot,u-w)\Big{\rangle}_{\alpha}\\ &=f(u-w)K_{\alpha}(u-w,w)e^{\frac{\alpha}{2}|w|^2}
\nonumber \\ & =\Big{[}C_{\alpha}(w)f\Big{]}(u) \end{align*} for all $f \in \Fp \cap \Ft$, which
shows that $T_{s_w}^{\alpha}=C_{\alpha}(w)$ on $\Fp.$

 Since $C_{\alpha}(v) T_{q_{\beta}(\cdot -u)}^{\alpha}$ can be written as the Toeplitz product $
 T_{s_v}^{\alpha} T_{q_{\beta}(\cdot -u)}^{\alpha}$, it can be shown that $C_{\alpha}(v)
 T_{q_{\beta}(\cdot -u)}^{\alpha} = T_{f_\beta}^\alpha $ where $f_\beta : =s_v \; \sharp_{\alpha}\;
 q_{\beta}(\cdot-u)$ and the
``product'' $\sharp_{\alpha}$ is defined by \begin{equation*} \psi\: \sharp_{\alpha}\: \varphi :=
\sum_{\gamma\in \mathbb{N}_0^n}\frac{1}{(-\alpha)^{|\gamma|}\gamma!}
\frac{\partial^{|\gamma|}\psi}{\partial z^{\gamma}}\cdot \frac{\partial^{|\gamma|}\varphi}{\partial
\overline{z}^{\gamma}} \end{equation*} for suitable smooth functions $\psi$ and $\varphi$ on
$\mathbb{C}^n$ (see \cite{Bau} for more details.) Using this formula, one can directly compute that
\begin{equation*} f_\beta(z)=\left(\frac{\beta+\alpha}{\alpha}\right)^n\exp\Big{\{}
\frac{\alpha}{2}|v|^2+\beta(z-u)\cdot \overline{v}+2i\alpha\: \textup{Im}( z\cdot
\overline{v})-\beta|z-u|^2\:\Big{\}}. \end{equation*} Note that one could also directly verify the
equality  $C_{\alpha}(v)T_{q_{\beta}(\cdot -u)}^{\alpha}=T_{f_\beta} ^{\alpha}$ where $f_\beta$ is
defined as above by comparing the Berezin transforms of both sides.

Using these shift operators and their properties, we can now prove \begin{theorem} If $1 < p <
\infty, f \in F_\alpha ^p,$ and $g \in F_\alpha ^q$, then  $f \otimes g \in \mathcal{T}_p ^\alpha.$ \end{theorem}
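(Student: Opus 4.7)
The plan is to reduce the claim to the special case where $f$ and $g$ are reproducing kernels, which will be handled by conjugating the rank-one operator $1\otimes 1$ (shown to lie in $\mathcal{T}_p^\alpha$ by Lemma $3.8$) with the weighted shift operators $C_\alpha(w)$. Since the latter are themselves Toeplitz operators with bounded symbols via the identity $C_\alpha(w)=T_{s_w}^\alpha$ verified above, they lie in $\mathcal{T}_p^\alpha$, and so do their inverses $C_\alpha(-w)$. Once the rank-one case is settled, a Hahn-Banach density argument combined with bilinearity of $(f,g)\mapsto f\otimes g$ will finish the proof.

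The key algebraic identity is
\[
C_\alpha(v)\,(1\otimes 1)\,C_\alpha(-u)\;=\;k_v^\alpha\otimes k_u^\alpha,\qquad u,v\in\mathbb{C}^n.
\]
To verify this, observe that $C_\alpha(v)1=k_v^\alpha$ is immediate from the definitions of $C_\alpha$ and of the normalized reproducing kernel, while a direct computation shows both
\[
(C_\alpha(-u)h)(0)=e^{-\frac{\alpha}{2}|u|^2}h(u)\quad\text{and}\quad \langle h,k_u^\alpha\rangle_\alpha=e^{-\frac{\alpha}{2}|u|^2}h(u),
\]
where the second equality uses the reproducing property (which extends from $F_\alpha^2$ to $F_\alpha^p$ via density of polynomials and H\"older continuity of the pairing). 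Since $(1\otimes 1)\phi=\langle\phi,1\rangle_\alpha\cdot 1=\phi(0)$, applying $C_\alpha(v)(1\otimes 1)C_\alpha(-u)$ to $h$ produces $e^{-\frac{\alpha}{2}|u|^2}h(u)\,k_v^\alpha=(k_v^\alpha\otimes k_u^\alpha)h$. Because $\mathcal{T}_p^\alpha$ is a (norm-closed) algebra containing all three factors, we conclude $k_v^\alpha\otimes k_u^\alpha\in\mathcal{T}_p^\alpha$ for every $u,v$, and by linearity the same holds for any finite combination $\sum_{i,j}c_i\overline{d_j}\,k_{v_i}^\alpha\otimes k_{u_j}^\alpha$.

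For the density step, if $\Lambda\in(F_\alpha^p)^*$ corresponds via the natural pairing $(F_\alpha^p)^*=F_\alpha^q$ to some $\phi\in F_\alpha^q$ and vanishes on each $k_v^\alpha$, then
\[
0=\langle k_v^\alpha,\phi\rangle_\alpha=e^{-\frac{\alpha}{2}|v|^2}\overline{\phi(v)}
\]
for every $v\in\mathbb{C}^n$, forcing $\phi\equiv 0$; hence by Hahn-Banach $\operatorname{span}\{k_v^\alpha:v\in\mathbb{C}^n\}$ is norm-dense in $F_\alpha^p$, and the same argument (with $p$ and $q$ swapped) gives density in $F_\alpha^q$. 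Choosing $f_n\to f$ in $F_\alpha^p$ and $g_n\to g$ in $F_\alpha^q$ with each $f_n$, $g_n$ in the respective spans, the elementary bilinear estimate
\[
\|f\otimes g-f_n\otimes g_n\|_{F_\alpha^p\to F_\alpha^p}\leq\|f-f_n\|_{\alpha,p}\|g\|_{\alpha,q}+\|f_n\|_{\alpha,p}\|g-g_n\|_{\alpha,q}
\]
yields operator-norm convergence, and since each approximant lies in $\mathcal{T}_p^\alpha$ and $\mathcal{T}_p^\alpha$ is norm-closed, we obtain $f\otimes g\in\mathcal{T}_p^\alpha$.

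The computations are essentially bookkeeping with Gaussian exponentials, so there is no substantial technical obstacle. The only mild subtlety worth verifying carefully is that the reproducing formula $\langle k_v^\alpha,\phi\rangle_\alpha=e^{-\frac{\alpha}{2}|v|^2}\overline{\phi(v)}$ continues to hold for $\phi\in F_\alpha^q$ with $q\neq 2$, which is standard given the continuity of the duality pairing and density of polynomials in $F_\alpha^q$ for $1<q<\infty$.
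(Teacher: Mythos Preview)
Your proof is correct and follows essentially the same approach as the paper: reduce to the rank-one case $k_v^\alpha\otimes k_u^\alpha$ via density of kernel functions, then realize this operator as $C_\alpha(v)(1\otimes 1)C_\alpha(-u)$ and invoke Lemma~3.8. The one difference is that you simply observe that $C_\alpha(\pm w)=T_{s_{\pm w}}^\alpha$ and $1\otimes 1$ all lie in $\mathcal{T}_p^\alpha$ and use the fact that $\mathcal{T}_p^\alpha$ is an algebra, whereas the paper goes further and shows (via the $\sharp_\alpha$ composition formula) that each product $C_\alpha(v)T_{q_\beta}^\alpha C_\alpha(-w)$ is in fact a \emph{single} Toeplitz operator $T_{\mathcal{F}_\beta}^\alpha$ with an explicit symbol $\mathcal{F}_\beta\in C_b^\infty$. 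Your argument is more streamlined for the purpose at hand; the paper's extra computation yields the slightly stronger conclusion that $k_v^\alpha\otimes k_u^\alpha$ is a norm limit of single Toeplitz operators rather than merely of finite products of such.
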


\begin{proof} Since $\s \{K(\cdot, w) : w \in \mathbb{C}^n\}$ is dense in $F_\alpha ^p$, it is
enough to show that each $K(\cdot, v) \otimes K(\cdot, w)  $ is in $\mathcal{T}_p ^\alpha$.
Furthermore, if $g \in F_\alpha ^p$, then \begin{align*} C_{\alpha}(v)\big{(} 1\otimes 1
\big{)}C_{\alpha}(-w)g &=\left\langle C_{\alpha}(-w)g,1\right\rangle_{\alpha}C_{\alpha}(v)1\\
&=\left\langle g,C_{\alpha}(w)1\right\rangle_{\alpha} C_{\alpha}(v)1\\
&=e^{-\frac{\alpha}{2}(|w|^2+|v|^2)}\Big{\langle}
g,K_{\alpha}(\cdot,w)\Big{\rangle}_{\alpha}K_{\alpha}(\cdot,v) \\ & =
e^{-\frac{\alpha}{2}(|w|^2+|v|^2)} \left(K_{\alpha}(\cdot,v)\otimes K_{\alpha}(\cdot,w)\right) g.
\end{align*}
 Thus, we only need to show that operators of the form
$C_{\alpha}(v)\big{(} 1\otimes 1 \big{)}C_{\alpha}(-w)$ can be approximated by Toeplitz operators
with symbols in $C_b ^\infty$.

   Moreover, since \begin{align*}
C_{\alpha}(v)T_{q_{\beta}}^{\alpha}C_{\alpha}(-w)
&=C_{\alpha}(v)C_{\alpha}(-w)C_{\alpha}(w)T_{q_{\beta}}^{\alpha}C_{\alpha}(-w)\\ &=e^{i\alpha \cdot
\textup{Im}(v\cdot \overline{w})}C_{\alpha}(v-w)T_{q_{\beta}(\cdot-w)}^{\alpha}, \end{align*} we can
write $C_{\alpha}(v)T_{q_{\beta}}^{\alpha}C_{\alpha}(-w)$  as a single Toeplitz operator
$T_{{\mathcal{F}}_{\beta}}^{\alpha}$ with symbol (depending on $v$ and $w$)  ${\mathcal{F}}_{\beta}
\in C_b ^\infty$.   Finally, this fact tells us that \begin{align*} \underset{\beta \rightarrow
\infty}{\lim} \left\| T_{{\mathcal{F}}_{\beta} }^{\alpha}  \right. & - \left.C_{\alpha}(v)\big{(}
1\otimes 1\big{)}C_{\alpha}(-w)\right\|_{F_{\alpha}^p\rightarrow F_{\alpha}^p} \nonumber \\ & =
\underset{\beta \rightarrow \infty}{\lim} \left\| C_{\alpha}(v)T_{q_{\beta}}^{\alpha}C_{\alpha}(-w)
-C_{\alpha}(v)\big{(} 1\otimes 1\big{)}C_{\alpha}(-w)\right\|_{F_{\alpha}^p\rightarrow F_{\alpha}^p}
\nonumber \\ & \leq \underset{\beta \rightarrow \infty}{\lim}  \left\|
T_{q_{\beta}}^{\alpha}-1\otimes 1\right\|_{F_{\alpha}^p\rightarrow F_{\alpha}^p} \nonumber \\ & = 0
\end{align*} where the last equality follows from Lemma $3.8$. \end{proof}

\section{Sampling and interpolation results for the Fock space}

  The proofs of the following two lemmas (Lemmas $4.1$ and $4.3$) borrow deep ideas from the theory
  of sampling and interpolation in Fock spaces.  In particular, the proof of Lemma $4.1$ is similar
  to the proof of Theorem $5.1$ in \cite{MT}.  On the other hand, Lemma $4.3$ is a ``folklore"
  result in sampling theory and follows from the machinery developed in \cite{G} for abstract
  coorbit spaces.  However, since Lemma $4.3$ is not explicitly stated in \cite{G}, we will provide
  a short and direct proof.

Before we state and prove Lemma $4.1$, we need to briefly discuss the pseudo-hyperbolic metric
$\rho$ on $\mathbb{B}_n$.  Given any $z \in \mathbb{B}_n$, let $\phi_z$ be the involutive
automorphism of $\mathbb{B}_n$ that interchanges $0$ and $z$.  The  pseudo-hyperbolic metric $\rho$
on $\mathbb{B}_n$ is then defined by the formula \begin{equation*} \rho(z, w) = |\phi_z (w)|.
\end{equation*} It is well known (see \cite{Z}) that $\rho$ is indeed a metric on $\mathbb{B}_n$ and
that $\rho$ satisfies the identity \begin{equation*}  1 - (\rho(z, w))^2 = \frac{(1 - |z|^2)(1 -
|w|^2)}{|1 - z \cdot \overline{w}|}. \end{equation*}

\begin{lemma} Suppose that $1 < p \leq 2, \ r > 1,$ and $w_k \in B(0, r)$ for $k = 1, \ldots,  m$
are points where $|w_k - w_j| \geq \epsilon > 0$ if $j \neq k.$  Then for any $1 \leq k_0 \leq m,$
there exists $g_{k_0} \in F_\alpha ^p$ and a constant $C = C( \epsilon, r) > 0$ (which is assumed to
also depend on $n, \alpha,$ and $p$, but does not depend on the sequence $\{w_k\}_k$ itself ) such
that \begin{align} g_{k_0} (w_k) = \delta_{k_0, k} \text{  and  } \|g_{k_0}\|_{\alpha, p} \leq C.
\nonumber \end{align}   \end{lemma}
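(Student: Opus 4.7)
The plan is to write $g_{k_0}$ explicitly as a polynomial of degree at most $m-1$, mimicking one-variable Lagrange interpolation, and to argue that the $\epsilon$-separation of the $w_k$'s inside the bounded set $B(0,r)$ forces both the number of points and the size of the resulting polynomial to be controlled solely by $\epsilon$ and $r$.

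First I would observe that the separation hypothesis together with $w_k \in B(0,r)$ forces
$$m \leq M(r,\epsilon,n) := (1 + 2r/\epsilon)^{2n},$$
since the disjoint balls $B(w_k,\epsilon/2)$ are all contained in $B(0,r+\epsilon/2)$. Thus $m$ is bounded purely in terms of $r$, $\epsilon$, and $n$.

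Next, for each $k \neq k_0$ I would produce a holomorphic linear functional vanishing at $w_k$ but bounded below at $w_{k_0}$. Writing $v_k := w_{k_0} - w_k$ (so that $|v_k| \geq \epsilon$), set
$$L_k(z) := \frac{1}{|v_k|} \sum_{i=1}^{n} \overline{v_{k,i}}\,(z_i - w_{k,i}),$$
which is entire in $z$ and satisfies $L_k(w_k)=0$, $L_k(w_{k_0}) = |v_k| \geq \epsilon$, and, by Cauchy-Schwarz, $|L_k(z)| \leq |z-w_k| \leq |z|+r$. Then set
$$g_{k_0}(z) := \prod_{k \neq k_0} \frac{L_k(z)}{L_k(w_{k_0})},$$
which is a polynomial of degree $m-1$. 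Each factor equals $1$ at $z=w_{k_0}$, giving $g_{k_0}(w_{k_0})=1$, while for $j \neq k_0$ the factor corresponding to $k=j$ vanishes at $z=w_j$, giving $g_{k_0}(w_j)=0$. Hence $g_{k_0}(w_k) = \delta_{k_0,k}$ as required.

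For the norm estimate, the uniform pointwise bound
$$|g_{k_0}(z)| \leq \left(\frac{|z|+r}{\epsilon}\right)^{m-1},$$
together with $m \leq M(r,\epsilon,n)$, reduces $\|g_{k_0}\|_{\alpha,p}^p$ to a Gaussian moment integral of bounded order against $e^{-p\alpha|z|^2/2}\,dv(z)$, which is finite and controlled by a constant depending only on $\epsilon, r, n, \alpha, p$.

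No step is genuinely difficult: once one notices that the hypotheses bound $m$, everything reduces to elementary polynomial interpolation. In fact this argument works uniformly for every $1 < p < \infty$, so the restriction $p \leq 2$ in the statement presumably reflects how the lemma will be invoked later rather than an intrinsic limitation. The reference to \cite{MT} in the outline suggests the authors may instead follow a sampling-theoretic route, presumably because the same ideas are used to prove Lemma $4.3$ and fit the broader framework of interpolation in Fock spaces, but the direct construction above suffices for the finite statement as written.
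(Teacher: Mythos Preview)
Your argument is correct and considerably more elementary than the route the paper takes. The paper first reduces to $p=2$ by a H\"older inequality, then builds a bounded holomorphic function on a large ball using Blaschke-type products in the pseudo-hyperbolic metric, multiplies by a smooth cutoff, and finally invokes H\"ormander's $L^2$ $\overline{\partial}$-theorem with a carefully chosen plurisubharmonic weight to obtain an entire correction with controlled $F_\alpha^2$ norm. Your explicit Lagrange-type polynomial bypasses all of this: once $m$ is bounded by a packing argument (which the paper also does), the product of normalized linear forms already lives in $F_\alpha^p$ for every $1<p<\infty$, with a norm controlled by a Gaussian moment of fixed order. The paper's approach is the standard interpolation-theoretic template from \cite{MT} and would generalize more readily to the weighted Fock spaces $F_\phi^p$ mentioned in Section~7, where polynomial growth need not be sufficient; your construction, on the other hand, gives a shorter self-contained proof of the finite, localized statement actually needed here and makes the restriction $p\leq 2$ visibly superfluous.
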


\begin{remark} Since we require that $C = C(\epsilon, r)$ does not depend on the actual sequence
$\{w_k\}_k$ itself, Lemma $4.1$ does not immediately follow from the results in \cite{MT}.
\end{remark}

\begin{proof}    First note that if $1 < p < 2$, $\alpha > 0$, and $g \in \Lp$, then a direct
application of H\"{o}lder's inequality tells us that $\|g\|_{\alpha, p} \leq C_{\alpha, \alpha'}
\|g\|_{\alpha', 2}$ for any $0 < \alpha' < \alpha$. Thus, it is enough to prove the lemma for $p =
2$ and arbitrary $\alpha > 0$.  For the rest of the proof, $C$ will denote a positive constant that
may depend on $\epsilon, r, n, p$ and $\alpha$, but not on the actual sequence $\{w_k\}_k$ itself.
Now if  $|w_k - w_j| \geq \epsilon $ when $j \neq k,$ then clearly $\{B(w_k, \frac{\epsilon}{2})
\}_k $ is a pairwise disjoint sequence of balls with \begin{align} \bigcup_k B\left(w_k,
\frac{\epsilon}{2}\right) \subset B\left(0, r + \frac{\epsilon}{2}\right) \nonumber \end{align}
which means that \begin{align} m \leq \left(\frac{2r}{\epsilon} + 1 \right)^{2n} =: M_{r, \epsilon}.
\nonumber \end{align}   Thus, since  \begin{align}  \underset{ k}{\inf} \prod_{j \neq k}^m
\frac{|w_j - w_k|}{r +  \epsilon} > \left( \frac{\epsilon }{r + \epsilon} \right) ^{M_{r, \epsilon}
- 1} \nonumber \end{align} for $k \leq m$,  it follows from the discussion preceeding the statement
of Lemma $4.1$ that \begin{align} \underset{k}{\inf} \prod_{j \neq k}^m  \rho \left(\frac{w_j}{r +
\epsilon} , \frac{w_k}{r +  \epsilon} \right) > C \nonumber \end{align}  for  $k \leq m$.  Now, it
is easy to construct a bounded function $\varphi_{k_0}  $ that is holomorphic on $B(0, r +
\epsilon)$ with \begin{align} \varphi_{k_0}   (w_k) = \delta_{k_0, k} \text{  and  } \underset{z \in
B(0, r + \epsilon)}{\sup} |\varphi_{k_0}  (z)|    \leq C. \nonumber \end{align}  In particular, let
\begin{align} \widetilde{\varphi_{k_0}}  (z) := \prod_{j \neq k_0}^m \frac{\overline{\phi_{w_j}
(w_{k_0})} \phi_{w_j} (z) }{| \phi_{w_j} (w_{k_0})|^2} \nonumber \end{align} and set $\varphi_{k_0}
(z)  := \widetilde{\varphi_{k_0}} \left(\frac{z}{r + \epsilon} \right)$.

Let $C_c^\infty (\C)$ denote the space of all smooth, compactly supported complex valued functions
on $\C$.  Pick any $\eta \in C_c ^\infty (\mathbb{C}^n)$ where \begin{align*} \eta \equiv 1 \text{
on } B\left(0, \frac{\epsilon}{2} \right) \text{ and } \eta \equiv 0 \text{ on }  \C \backslash
B\left(0, \frac{2\epsilon}{3} \right). \end{align*} If we define $\psi \in C_c ^\infty
(\mathbb{C}^n)$ by \begin{equation*} \psi (z) := \sum_{k = 1}^m \eta(z - w_k), \end{equation*} then
$\psi$ satisfies

\begin{align}  \psi \equiv 1  \text{ on } \bigcup_{k = 1}^m B\left(w_k, \frac{\epsilon}{2} \right)
\text{  and  }
 \ \psi \equiv 0 \text{ on } \mathbb{C}^n \backslash  \bigcup_{k = 1}^m B\left(w_k,
 \frac{2\epsilon}{3} \right). \nonumber \end{align}

\noindent If we extend $\varphi_{k_0}   (z) $ to $|z| \geq r + \epsilon$ by setting $\varphi_{k_0}
(z)  \equiv 0$ for $|z| \geq r + \epsilon$ and let $\widetilde{F_{k_0}} (z) = \psi (z) \varphi_{k_0}
(z)$, then $\widetilde{F_{k_0}} \in C_c^\infty (\mathbb{C}^n)$ satisfies

\begin{list}{}{\setlength\parsep{0in}} \item $(i) \ \widetilde{F_{k_0}} (w_k) = \delta_{k_0, k},$
 \item $(ii) \  \|\widetilde{F_{k_0}} \|_{L^\infty} \leq C,$
\item $(iii) \ \overline{\partial} \widetilde{F_{k_0}}  $ is supported on $\bigcup_{k = 1}^m B(w_k,
    \epsilon) \backslash B(w_k, \frac{\epsilon}{2}),$ \item $(iv) \ \widetilde{F_{k_0}} $ is
    supported on $\bigcup_{k = 1}^m B(w_k, \epsilon).$ \end{list}

Now for large $R > r $, let $v(z)$ be the negative function  \begin{align} v(z) := n \sum_{k : |z -
w_k| < R} \left[ \log \left| \frac{z- w_k}{R}\right|^2 + 1 - \left| \frac{z - w_k}{R} \right|^2
\right]. \nonumber \end{align}    It is easy to see that $\phi (z) := v(z) + \frac{\alpha}{2} |z|^2$
is plurisubharmonic for $R$  large enough (depending on $\epsilon$ and $r$), and so H\"{o}rmander's
Theorem (Theorem $4.4.2$ in \cite{H}) gives us a (distributional) solution $u \in L_{\text{loc}} ^2
(\mathbb{C}^n)$ to the equation $\overline{\partial} u = \overline{\partial}  \widetilde{F_{k_0}}$
where \begin{align} \int_{\mathbb{C}^n} |u(z)|^2 (1 + |z|^2)^{-2} e^{- \frac{\alpha}{2} |z|^2} \,
dv(z) & \leq \int_{\mathbb{C}^n} |u(z)|^2 (1 + |z|^2)^{-2} e^{- \phi (z) } \, dv(z) \nonumber \\ &
\leq \int_{\mathbb{C}^n} |\overline{\partial}  \widetilde{F_{k_0}} (z)|^2 e^{- \phi (z) } \, dv(z).
\tag{4.1} \end{align} However, since  \begin{align} \left| \phi (z) - \log |z - w_k|^{2n} -
\frac{\alpha}{2} |z|^2\right| \leq C \text{  when  } |z - w_k | < \epsilon, \nonumber \end{align} we
get that $|\phi(z)| \leq C$ for all $z \in \bigcup_{k = 1}^m B(w_k, \epsilon) \backslash B (w_k,
\frac{\epsilon}{2})$. Moreover, since $\overline{\partial} \widetilde{F_{k_0}}  $ is supported on
$\bigcup_{k = 1}^m B(w_k, \epsilon) \backslash B (w_k, \frac{\epsilon}{2})$, we get from $(4.1)$
that \begin{align} \int_{\mathbb{C}^n} |u(z)|^2  e^{- \alpha |z|^2} \, dv(z) & \leq C
\int_{\mathbb{C}^n} |u(z)|^2 (1 + |z|^2)^{-2} e^{- \frac{\alpha}{2} |z|^2} \, dv(z) \nonumber \\ &
\leq C \int_{\mathbb{C}^n} |\overline{\partial}  \widetilde{F_{k_0}} (z)|^2 e^{- \phi(z) } \, dv(z)
\nonumber \\ &  \leq C \underset{z \in \bigcup_{k = 1}^m B(w_k, \epsilon) \backslash B (w_k,
\frac{\epsilon}{2})}{\sup} |\overline{\partial}  \widetilde{F_{k_0}} (z)|^2 \tag{4.2} \\ &\leq C
\nonumber \end{align} where the last inequality follows from the product rule combined with the
Cauchy estimates applied to $\widetilde{F_{k_0}}$

Now note that if $F_{k_0} := u - \widetilde{F_{k_0}}$, then $F_{k_0}$ is entire, so that $u \in
C^\infty(\mathbb{C}^n)$ and  $\|F_{k_0}\|_{\alpha, 2} \leq C $.  Finally, $(4.2 )$ and the fact that
$e^{- \phi(z)} \approx |z - w_k|^{-2n} $ for $z$ near $w_k$ tells us that $u(w_k) = 0$, so that
$F_{k_0}(w_k) = \delta_{k_0, k}$, which completes the proof.

\end{proof}

We need to set up some simple notation and machinery before we state and prove Lemma $4.3$.  Let
$\mathbb{H}_n = \mathbb{C}^n \times \partial \mathbb{D}$ be the quotient of the $n$ dimensional
complex Heisenberg group by $2\pi \mathbb{Z}$, with group law \begin{align} (z_1, t_1)  (z_2, t_2) =
(z_1 + z_2, t_1 t_2 e^{- i \alpha \text{Im } {z_1} \cdot \overline{z_2}}) \nonumber \end{align} and
with Haar measure $m$ being the Lebesgue measure $m =  dv d\theta$ on $\mathbb{C}^n \times \partial
\mathbb{D}$ where $d\theta$ is the ordinary (normalized) arc length measure on $\partial
\mathbb{D}$.   Let  $T : F_\alpha ^p \rightarrow L^p(\mathbb{H}_n)$ be the isometry given by
\begin{align} Tf(z, t) = t e^{-\frac{\alpha}{2}  |z|^2} f(z). \nonumber \end{align}  For $f \in
L^p(\mathbb{H}_n)$ and $g \in L^q(\mathbb{H}_n)$ where $q$ is the dual exponent of $p$,  let $f \ast
g$ be the convolution product defined by \begin{align} f*g (h) = \int_{\mathbb{H}_n} f(y) g(h
y^{-1}) \, dm(y) \nonumber \end{align} for $h \in \mathbb{H}_n$.  If $G(z, u) = u
e^{-\frac{\alpha}{2} |z|^2}$, then the reproducing property of $F_\alpha ^p$ tells us that $F \ast G
= F$ for any $F \in T(F_\alpha ^p)$.

Now, enumerate $\epsilon  \mathbb{Z}^{2n}$ for fixed $\epsilon$ as $\{z_j\}_j$. For any fixed
integer $N_\epsilon > \epsilon^{-1}$ and any integer $0 \leq k < N_\epsilon$, let $u_k = \exp
(\frac{2\pi i k}{N_\epsilon})$.  Let $U_\epsilon = [0, \epsilon )^{2n}  \times \{e^{2\pi i \theta} :
0 \leq \theta < \frac{1}{N_\epsilon} \} \subset \mathbb{H}_n$ and (for any integer $0 \leq k <
N_\epsilon$) let $G_{j k}$ be the set $U_{\epsilon}$ translated on the right by $(z_j, u_k)$, so
that $G_{jk} = U_\epsilon  (z_j, u_k)$.  Clearly we then have that:

\begin{list}{}{\setlength\parsep{0in}} \item $(a) \ \mathbb{H}_n = \bigcup_{j, k} G_{j k},$ \item
$(b) \ G_{j k} \cap G_{j'k'} = \emptyset \text{ if } (j, k) \neq (j', k').$ \end{list}

Note that $m(G_{jk})$ only depends on $\epsilon$ and not on $j$ or $k$.  Thus, if $c_\epsilon =
m(G_{jk})$, then we can define an operator $R_\epsilon $ on $T(F_\alpha^p) \subset L^p
(\mathbb{H}_n)$ by \begin{align} R_\epsilon F (z, u) := c_\epsilon \sum_{j, k}  F(z_j, u_k) G((z, u)
(z_j, u_k)^{-1}). \nonumber \end{align} By a direct calculation we have that $R_\epsilon :
T(F_\alpha^p) \rightarrow T(F_\alpha^p)$ boundedly. In particular, if $f \in F_\alpha ^p$, then
\begin{align} R_\epsilon Tf (z, u) & = c' _\epsilon u e^{-\frac{\alpha}{2} |z|^2} \sum_{j} f(z_j)
e^{\alpha (z \cdot \overline{z_j}) - \alpha |z_j|^2}  \nonumber \\ & = c'_\epsilon  u e^{-
\frac{\alpha}{2} |z|^2} T_{\nu_\epsilon} f (z) \nonumber \end{align} where $c_\epsilon ' = v([0,
\epsilon )^{2n})$, $v$ is the ordinary Lebesgue volume measure on $\mathbb{C}^n$, and $\nu_\epsilon$
is the measure \begin{align} \nu_\epsilon = \sum_{\sigma \in \epsilon \mathbb{Z}^{2n} }
\delta_\sigma \nonumber \end{align} where $\delta_\sigma$ is the point-mass measure at $\sigma$. But
since $\nu_\epsilon$ is a Fock-Carleson measure, it is clear that $R_\epsilon :  T(F_\alpha^p)
\rightarrow T(F_\alpha^p)$ boundedly.   Let $\chi_{jk}$ be the characteristic function of $G_{jk}$
and define the operator $S_\epsilon : T(F_\alpha^p) \rightarrow L^p(\mathbb{H}_n)$ by \begin{align}
S_\epsilon F = \sum_{j, k} F(z_j, u_k) \chi_{jk} \ast G. \nonumber \end{align} Finally, define the
sharp maximal function $G^\sharp _{U_\epsilon}$ on $\mathbb{H}_n$ by \begin{align} G^\sharp
_{U_\epsilon} (h) = \underset{u \in U_\epsilon}{\sup} |G(u^{-1} h ) - G(h)| \nonumber \end{align}
and define $G^{\widetilde{\sharp}}  _{U_\epsilon}$ on $\mathbb{H}_n$ by \begin{align}
G^{\widetilde{\sharp}} _{U_\epsilon} (h) = \underset{u \in U_\epsilon}{\sup} |G( h u ) - G(h)|.
\nonumber \end{align}

\begin{lemma} Given $F \in T(F_\alpha^p)$, we have that \begin{align} \|  F - S_\epsilon
F\|_{L^p(\mathbb{H}_n)} \leq o(\epsilon) \|F\|_{L^p(\mathbb{H}_n)} \nonumber \end{align}  where
$\underset{\epsilon \rightarrow 0^+}{\lim} \ o(\epsilon) = 0.$ \end{lemma}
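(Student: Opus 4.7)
The plan is to bound $F - S_\epsilon F$ pointwise by an iterated convolution involving the sharp maximal function $G^{\sharp}_{U_\epsilon}$, and then to apply Young's inequality on the unimodular group $\mathbb{H}_n$. Since the sets $\{G_{jk}\}$ partition $\mathbb{H}_n$ and $F = F \ast G$ by the reproducing property, the natural decomposition gives
\begin{equation*}
(F - S_\epsilon F)(h) = \sum_{j,k} \int_{G_{jk}} \bigl[ F(y) - F(y_{jk}) \bigr] G(hy^{-1}) \, dm(y),
\end{equation*}
where $y_{jk} := (z_j, u_k)$ is the sampling point associated with $G_{jk} = U_\epsilon y_{jk}$.

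To control the oscillation of $F$ on each piece, I apply the reproducing identity a second time to write
\begin{equation*}
F(y) - F(y_{jk}) = \int F(w) \bigl[ G(y w^{-1}) - G(y_{jk} w^{-1}) \bigr] dm(w).
\end{equation*}
For $y = \sigma y_{jk}$ with $\sigma \in U_\epsilon$, setting $\eta = y_{jk} w^{-1}$ yields $y w^{-1} = \sigma \eta$, and since $\eta = \sigma^{-1}(\sigma \eta)$, the difference $|G(\sigma \eta) - G(\eta)|$ is dominated by a single term in the supremum defining $G^{\sharp}_{U_\epsilon}(\sigma \eta) = G^{\sharp}_{U_\epsilon}(yw^{-1})$. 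Hence $|F(y) - F(y_{jk})| \leq (|F| \ast G^{\sharp}_{U_\epsilon})(y)$, and reassembling the disjoint pieces into a single integral over $\mathbb{H}_n$ produces
\begin{equation*}
|F(h) - S_\epsilon F(h)| \leq \bigl( (|F| \ast G^{\sharp}_{U_\epsilon}) \ast |G| \bigr)(h).
\end{equation*}
Young's inequality applied twice on the unimodular group $\mathbb{H}_n$ then delivers
\begin{equation*}
\|F - S_\epsilon F\|_{L^p(\mathbb{H}_n)} \leq \|F\|_{L^p(\mathbb{H}_n)} \cdot \|G^{\sharp}_{U_\epsilon}\|_{L^1(\mathbb{H}_n)} \cdot \|G\|_{L^1(\mathbb{H}_n)},
\end{equation*}
and $\|G\|_{L^1(\mathbb{H}_n)}$ is finite since $|G(z, t)| = e^{-\alpha |z|^2/2}$ is a Gaussian.

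The main obstacle is then the decay estimate $\|G^{\sharp}_{U_\epsilon}\|_{L^1(\mathbb{H}_n)} \to 0$ as $\epsilon \to 0^+$. Using the explicit form of $G$ together with the Heisenberg group law, a direct computation gives
\begin{equation*}
G(v h) - G(h) = G(h) \bigl[ v_t \, e^{-\alpha |v_z|^2/2 \, - \, \alpha v_z \cdot \bar z} - 1 \bigr]
\end{equation*}
for $v = (v_z, v_t) \in U_\epsilon^{-1}$ and $h = (z, t)$. The twisting term $e^{-i\alpha \textup{Im}(v_z \cdot \bar z)}$ from the group law has been absorbed into the exponent; the bracketed factor is $O(\epsilon(1 + |z|))$ uniformly on any ball $|z| \leq R$, while outside large balls the bound $|G(vh)| + |G(h)| \leq 2 e^{-\alpha(|z| - \epsilon\sqrt{2n})^2/2}$ supplies an integrable majorant uniform in small $\epsilon$. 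A routine dominated-convergence argument splitting the integral at $|z| = R$ and then letting $R \to \infty$ delivers $\|G^{\sharp}_{U_\epsilon}\|_{L^1} = o(\epsilon)$, completing the proof.
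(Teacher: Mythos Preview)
Your proof is correct and follows essentially the same approach as the paper. Both arguments write $F - S_\epsilon F = (F - \sum_{j,k} F(y_{jk})\chi_{jk}) \ast G$, use the reproducing property a second time together with the left translation $(z,u) = (z',u')(z_j,u_k)$ to bound the oscillation by $|F| \ast G^\sharp_{U_\epsilon}$, and conclude via two applications of Young's inequality; the only difference is the order in which you interleave the pointwise bound and the convolution estimates, and you supply explicit details for the decay $\|G^\sharp_{U_\epsilon}\|_{L^1(\mathbb{H}_n)} \to 0$ that the paper leaves as an easily checked fact.
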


\begin{proof} Since $F \ast G = F$ for $F \in T(F_\alpha ^p)$, Young's convolution inequality gives
us that  \begin{align} \|F - S_\epsilon F \|_{L^p(\mathbb{H}_n)}  & = \|(F  - \sum_{j, k} F(z_j,
u_k)  \chi_{jk} ) \ast G \|_{L^p(\mathbb{H}_n)} \nonumber \\ & \leq  \|F - \sum_{j, k} F(z_j, u_k)
\chi_{jk} \|_{L^p(\mathbb{H}_n)} \|G\|_{L^1(\mathbb{H}_n)}. \nonumber \end{align} We now  estimate
$F - \sum_{j, k} F(z_j, u_k)  \chi_{jk}$ pointwise using the reproducing property as follows:
\begin{align} |F(z, u) & - \sum_{j, k} F(z_j, u_k)  \chi_{jk} (z, u)| \nonumber \\ & \leq \sum_{j,k}
|F(z, u) - F(z_j, u_k)| \chi_{jk} (z, u) \nonumber \\ & = \sum_{j,k} \left| \int_{\mathbb{H}_n}
[G((z, u) y^{-1}) - G((z_j, u_k) y^{-1})] F(y) \, dm(y) \right|  \chi_{jk} (z, u). \tag{4.3}
\end{align} Fix any $j$ and $k$.  Since $(z, u) \in G_{jk}$ for each summand in $(4.3)$, we can
write $(z, u) = (z', u') (z_j, u_k)$ where $(z', u') \in U_\epsilon$, so that $(z_j, u_k) = (z',
u')^{-1} (z, u)$.  Plugging this into $(4.3)$ and recalling the definition of $G^\sharp
_{U_\epsilon}$, we get that \begin{align}  |F(z, u) & - \sum_{j, k}  F(z_j, u_k)  \chi_{jk} (z, u)|
\nonumber \\ & \leq  \sum_{j,k} \left| \int_{\mathbb{H}_n} [G((z, u) y^{-1}) - G((z', u')^{-1} (z,
u) y^{-1})] F(y) \, dm(y) \right| \chi_{jk} (z, u) \nonumber \\ & \leq   \sum_{j,k}
\left(\int_{\mathbb{H}_n} G^\sharp _{U_\epsilon} ((z, u) y^{-1}) |F(y)| \, dm(y) \right) \chi_{jk}
(z, u) \nonumber \\ & = |F| \ast G^\sharp _{U_\epsilon}  (z, u). \nonumber \end{align}  The proof is
now completed by another application of Young's convolution inequality and the easily checked fact
that \begin{align} \lim_{\epsilon \rightarrow 0^+} \|G^\sharp _{U_\epsilon} \|_{L^1(\mathbb{H}_n)} =
0. \nonumber \end{align}\end{proof}

Now we will state and prove Lemma $4.3$. Note that in the language of sampling theory, Lemma $4.3$
states that the ``frame operator'' \begin{equation*} f \mapsto \sum_{\sigma \in \epsilon
\mathbb{Z}^{2n}} \langle f, k_\sigma ^\alpha \rangle_\alpha  k_\sigma ^\alpha \end{equation*} on
$F_\alpha ^p$ associated to the frame $\{k_\sigma ^\alpha\}_{\sigma \in \epsilon \mathbb{Z}^{2n}}$
for small enough $\epsilon > 0$ is invertible. \begin{lemma}  Let $1 < p < \infty$ and let
$\nu_\epsilon$ be the measure \begin{align} \nu_\epsilon = \sum_{\sigma \in \epsilon \mathbb{Z}^{2n}
} \delta_\sigma \nonumber \end{align}  where $\delta_\sigma$ is the point-mass measure at $\sigma$.
Then $T_{\nu_\epsilon}$ is invertible on $F_\alpha ^p$ for small enough $\epsilon > 0$.
\end{lemma}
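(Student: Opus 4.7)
The plan is to exploit the computation just before the lemma, which shows $R_\epsilon T = c'_\epsilon \, T T_{\nu_\epsilon}$, i.e.\ up to conjugation by the isometry $T : F_\alpha^p \to T(F_\alpha^p)$ and the positive scalar $c'_\epsilon$, the operator $T_{\nu_\epsilon}$ is just $R_\epsilon$. Since $T$ is an isometric isomorphism onto its range, $T_{\nu_\epsilon}$ is invertible on $F_\alpha^p$ if and only if $R_\epsilon$ is invertible on $T(F_\alpha^p)$. Note that $R_\epsilon$ indeed maps $T(F_\alpha^p)$ into itself, since its range is spanned by functions of the form $G((z,u)(z_j,u_k)^{-1})$, which are (up to phase) translates of the reproducing-kernel-type function $G$, and similarly for $S_\epsilon$ since $\chi_{jk}\ast G = (\chi_{jk}\ast G)\ast G$.

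The strategy is to show that $R_\epsilon \to I$ in the operator norm on $T(F_\alpha^p)$ as $\epsilon \to 0^+$, so that $R_\epsilon$ is invertible by a Neumann series for all sufficiently small $\epsilon$. Lemma $4.2$ already delivers $\|I - S_\epsilon\|_{T(F_\alpha^p) \to L^p(\mathbb{H}_n)} = o(\epsilon)$, so it suffices to establish
\begin{equation*}
\|S_\epsilon - R_\epsilon\|_{T(F_\alpha^p) \to L^p(\mathbb{H}_n)} = o(\epsilon).
\end{equation*}

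For this comparison, I would expand $S_\epsilon F$ using the substitution $y = y'(z_j,u_k)$ in the convolution $\chi_{jk}\ast G$ to get
\begin{equation*}
S_\epsilon F(z,u) = \sum_{j,k} F(z_j,u_k) \int_{U_\epsilon} G\bigl((z,u)(z_j,u_k)^{-1}(y')^{-1}\bigr)\, dm(y'),
\end{equation*}
and compare it with the analogous representation of $R_\epsilon F$, which replaces the inner integrand by the constant $G((z,u)(z_j,u_k)^{-1})$. The pointwise difference is then controlled by the right-invariant sharp maximal function $G^{\widetilde{\sharp}}_{U_\epsilon^{-1}}$ from the excerpt, giving
\begin{equation*}
|S_\epsilon F(z,u) - R_\epsilon F(z,u)| \leq c_\epsilon \sum_{j,k} |F(z_j,u_k)|\, G^{\widetilde{\sharp}}_{U_\epsilon^{-1}}\bigl((z,u)(z_j,u_k)^{-1}\bigr).
\end{equation*}
Using the reproducing property $F = F \ast G$ to express the discrete sample values $F(z_j,u_k)$ as averages of $F$ over $G_{jk}$ (up to errors controlled by another application of $G^{\widetilde{\sharp}}_{U_\epsilon}$), this sum can be dominated by $|F| \ast H_\epsilon$ for a suitable kernel $H_\epsilon$ built from $G$ and $G^{\widetilde{\sharp}}_{U_\epsilon^{-1}}$. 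Young's convolution inequality then yields the operator norm bound, and the fact that $\|G^{\widetilde{\sharp}}_{U_\epsilon^{-1}}\|_{L^1(\mathbb{H}_n)} \to 0$ as $\epsilon \to 0^+$ (a right-translation analogue of the $L^1$ continuity used implicitly in Lemma $4.2$) gives the desired $o(\epsilon)$ decay.

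The main obstacle is step $4$, the pointwise-to-norm transfer of the $G^{\widetilde{\sharp}}_{U_\epsilon^{-1}}$ estimate; one must be careful because $R_\epsilon$ involves point evaluations at the lattice $\epsilon \mathbb{Z}^{2n}$ rather than integrals. The trick is precisely to use $F = F\ast G$ to replace those point values by convolution integrals of $F$ against translates of $G$, after which the estimate becomes a standard Young-inequality argument in $L^p(\mathbb{H}_n)$, in the same spirit as the proof of Lemma $4.2$. Once $\|I - R_\epsilon\|_{T(F_\alpha^p) \to T(F_\alpha^p)} < 1$ for $\epsilon$ small, Neumann summation finishes the argument, and pulling back through $T$ gives the invertibility of $T_{\nu_\epsilon}$ on $F_\alpha^p$.
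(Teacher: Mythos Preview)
Your overall strategy is exactly the paper's: reduce to invertibility of $R_\epsilon$ on $T(F_\alpha^p)$, write $\|I-R_\epsilon\|\le\|I-S_\epsilon\|+\|S_\epsilon-R_\epsilon\|$, invoke Lemma~4.2 for the first term, and control the second by a pointwise estimate involving the right sharp maximal function $G^{\widetilde{\sharp}}$ followed by Young's inequality. Even your pointwise bound (with $U_\epsilon^{-1}$ in place of $U_\epsilon$, coming from choosing $(z,u)(z_j,u_k)^{-1}$ rather than $(z,u)y^{-1}$ as the reference point) is a legitimate variant of the paper's.

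The one place where you make life harder than necessary is the last step. You propose to dominate the discrete sum $c_\epsilon\sum_{j,k}|F(z_j,u_k)|\,G^{\widetilde{\sharp}}_{U_\epsilon^{-1}}\bigl((z,u)(z_j,u_k)^{-1}\bigr)$ by $|F|\ast H_\epsilon$ via the reproducing identity $F=F\ast G$; this can be carried out, but it requires an additional Riemann-sum versus integral comparison that you leave unjustified. The paper avoids this entirely: by keeping the integral over $G_{jk}$ inside the estimate rather than extracting the factor $c_\epsilon$, one gets directly
\[
|R_\epsilon F - S_\epsilon F| \le \Bigl(\sum_{j,k}|F(z_j,u_k)|\,\chi_{jk}\Bigr)\ast G^{\widetilde{\sharp}}_{U_\epsilon},
\]
which is already a genuine convolution. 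Young's inequality then reduces everything to the bound $\bigl\|\sum_{j,k}|F(z_j,u_k)|\chi_{jk}\bigr\|_{L^p(\mathbb{H}_n)}\le C\|F\|_{L^p(\mathbb{H}_n)}$, and this is immediate from the sub-mean-value inequality Lemma~2.4, with no second use of the reproducing property and no auxiliary kernel $H_\epsilon$.
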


\begin{proof} Note that by definition, we have $T_{\nu_\epsilon} f   = c_\epsilon '  T^{-1}
R_\epsilon T f$. Thus, it is enough to show that $R_\epsilon :   T(F_\alpha^p) \rightarrow
T(F_\alpha^p) $ is invertible. By Lemma $4.2$, this will be proved if we can show that \begin{align}
\|R_\epsilon F - S_\epsilon F\|_{L^p(\mathbb{H}_n)} \leq o(\epsilon) \|F\|_{L^p(\mathbb{H}_n)}
\nonumber \end{align} where $F \in T(F_\alpha ^p)$ and $\underset{\epsilon \rightarrow 0^+}{\lim} \
o(\epsilon) = 0.$

To that end, we now pointwise estimate $|R_\epsilon F - S_\epsilon F|$ as follows: \begin{align}
|R_\epsilon   F   (z, u) & -  S_\epsilon F (z, u) | \nonumber \\ &  \leq  \sum_{j, k} |(m(G_{jk})
F(z_j, u_k) G( (z, u)(z_j, u_k)^{-1}) \nonumber \\ &  \hspace{47mm} -   F(z_j, u_k) \int_{G_{jk}}
G((z, u) y^{-1}) \, dm(y)) | \nonumber \\ & \leq   \sum_{j, k} |F(z_j, u_k)| \int_{G_{jk}} |G((z, u)
(z_j, u_k)^{-1}) - G((z, u) y^{-1})| \, dm(y). \tag{4.4} \end{align} However, since $y \in G_{jk}$
we can write $y = y' (z_j, u_k)  $ for some $y' \in U_\epsilon$ so that $(z_j, u_k)^{-1} =  y^{-1}
y'$, and plugging this into $(4.4)$ gives us that \begin{align} |R_\epsilon F (z, u) - & S_\epsilon
F (z, u) | \nonumber \\ & \leq \sum_{j, k} |F(z_j, u_k)| \int_{G_{jk}} |G((z, u) y^{-1} y') - G((z,
u) y^{-1})| \, dm(y) \nonumber \\ & \leq  \sum_{j, k} |F(z_j, u_k)| \int_{G_{jk}}
|G^{\widetilde{\sharp}} _{U_\epsilon} (z, u) y^{-1})| \, dm(y)  \nonumber \\ & = \left(\sum_{j,k}
|F(z_j, u_k)| \chi_{jk} \right) \ast G^{\widetilde{\sharp}} _{U_\epsilon}. \nonumber \end{align}
Again, it is easy to see that \begin{align} \lim_{\epsilon \rightarrow 0^+} \|G^{\widetilde{\sharp}}
_{U_\epsilon} \|_{L^1(\mathbb{H}_n)} = 0 \nonumber \end{align} so by Young's inequality, we only
need to show that  \begin{align} \|\sum_{j,k} |F(z_j, u_k)| \chi_{jk}\|_{L^p(\mathbb{H}_n)} \leq C
\|F\|_{L^p(\mathbb{H}_n)}  \nonumber \end{align}  which follows easily from Lemma $2.4$.
\end{proof}

For the rest of the paper, $\nu$ will denote the Fock-Carleson measure $\nu_{\epsilon_0}$ from Lemma
$4.3$ where $\epsilon_0	 $ is fixed and small enough so that $T_{\nu} ^\alpha$ is invertible.

\section{A uniform algebra $\mathcal{A}$ and its maximal ideal space}

Let $\mathcal{A}\subset L^{\infty}$ be the unital $C^*$-algebra of all bounded and uniformly
continuous functions on $\mathbb{C}^n$. Since $C_b^{\infty}\subset \mathcal{A}$, it follows from the
Theorem $3.7$ that $\mathcal{T}_p ^\alpha$ for $1< p<\infty$ is the closed algebra generated by Toeplitz operators with symbols in $\mathcal{A}$.  In this
section, we will extend the Berezin transform and other related objects defined on $\mathbb{C}^n$ to
$M_\mathcal{A}$, where $M_{\mathcal{A}}$ denotes the space of non-zero multiplicative functionals on
$\mathcal{A}$ equipped with the weak${}^*$ topology. Note that $\mathcal{A}$ is not separable and
hence the space $M_{\mathcal{A}}$ is not metrizable. Since $\mathcal{A}$ is a commutative, unital
$C^*$-algebra, the Gelfand-transform $\wedge :\mathcal{A} \rightarrow C(M_{\mathcal{A}})$ defined by
$\hat{a}(\varphi)=\varphi(a)$ for $a\in \mathcal{A}$ and $\varphi \in M_{\mathcal{A}}$ gives us an
isomorphism between $\mathcal{A}$ and $C(M_{\mathcal{A}})$. In the following we will often write
$a(\varphi)$ instead of $\hat{a}(\varphi)$. For $x \in \mathbb{C}^n$, let  $\delta_x \in
M_\mathcal{A}$ be the point evaluation at $x$ defined by $\delta_x(f)=f(x)$. It is not difficult to
see that the map $x \mapsto \delta_x$  induces a dense embedding of $\mathbb{C}^n$ into
$M_{\mathcal{A}}$.

For $w \in \mathbb{C}^n$, let $\tau_w$ be the usual translation function $\tau_w(z):=z-w$. More
generally, if $x \in M_{\mathcal{A}}$ and $w\in \mathbb{C}^n$, then define $\tau_x\in \prod_{w \in
\mathbb{C}^n} M_{\mathcal{A}}$ by $\tau_x(w)(a):=x(a\circ \tau_w)$ where $a\in \mathcal{A}$. We will
write $a\circ \tau_x(w)$ instead of $\tau_x(w)(a)$ since $\tau_x$ naturally extends the translation
by elements in $\mathbb{C}^n$ to a ``translation'' by elements in $M_{\mathcal{A}}$. Let $\epsilon
>0$, $w_1,w_2\in \mathbb{C}^n$ and $a\in \mathcal{A}$, then we have \begin{equation*} \left|a\circ
\tau_x(w_1)-a \circ\tau_x(w_2)\right|\leq \| a\circ \tau_{w_1}-a\circ \tau_{w_2}\|_\infty < \epsilon
\end{equation*} if $|w_1-w_2|<\delta$ where $\delta>0$ is chosen suitably according to the uniformly
continuity of $a$. Therefore we have shown that the map $\tau_x: \mathbb{C}^n\rightarrow
M_{\mathcal{A}}$ is continuous. Next, prove:
\begin{lemma}\label{lemma_uniformly_compact_convergence} If $(z_{\beta})_{\beta}$ is a net in
$\mathbb{C}^n$ converging to $x \in M_{\mathcal{A}}$, then $a\circ \tau_{z_{\beta}}(w)\rightarrow
a\circ \tau_x(w)$ for all $a\in \mathcal{A}$ and $w\in \mathbb{C}^n$ where the convergence is
uniform on compact subsets of $\mathbb{C}^n$. \end{lemma}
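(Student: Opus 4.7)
The plan is to reduce the uniform-on-compacta convergence to the combination of pointwise convergence (which is essentially built into the weak${}^*$ topology on $M_\mathcal{A}$) and equicontinuity in $w$ (which comes from the uniform continuity of $a$), and then to invoke a standard $\varepsilon/3$ argument.

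\textbf{Step 1: Pointwise convergence.} For each fixed $w \in \mathbb{C}^n$ and $a \in \mathcal{A}$, the translate $a \circ \tau_w$ again lies in $\mathcal{A}$, since translation preserves both boundedness and the modulus of continuity. Because the topology on $M_\mathcal{A}$ is the weak${}^*$ topology, the assumption $z_\beta \to x$ in $M_\mathcal{A}$ gives
\begin{equation*}
a \circ \tau_{z_\beta}(w) \;=\; \delta_{z_\beta}(a \circ \tau_w) \;\longrightarrow\; x(a\circ \tau_w) \;=\; a\circ \tau_x(w).
\end{equation*}
This already establishes the pointwise statement of the lemma.

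\textbf{Step 2: Equicontinuity in $w$.} Because $a$ is uniformly continuous on $\mathbb{C}^n$, with some modulus of continuity $\omega_a$ satisfying $\omega_a(t) \to 0$ as $t \to 0^+$, one has
\begin{equation*}
\bigl| a\circ \tau_{z_\beta}(w_1) - a\circ \tau_{z_\beta}(w_2) \bigr| \;=\; |a(z_\beta - w_1) - a(z_\beta - w_2)| \;\leq\; \omega_a(|w_1 - w_2|)
\end{equation*}
uniformly in $\beta$. Thus the family $\{w \mapsto a\circ \tau_{z_\beta}(w)\}_\beta$ is equicontinuous on $\mathbb{C}^n$. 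The limit function $w \mapsto a \circ \tau_x(w)$ is continuous by the remark just preceding the lemma statement (and, if desired, one also inherits the same modulus of continuity from the pointwise limit).

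\textbf{Step 3: Upgrade to uniform convergence on compacta.} Fix a compact set $K \subset \mathbb{C}^n$ and $\varepsilon > 0$. Choose $\delta > 0$ so that $\omega_a(\delta) < \varepsilon/3$, and cover $K$ by finitely many balls $B(w_i, \delta)$, $i = 1, \dots, N$. By Step 1, there is an index $\beta_0$ beyond which $|a\circ \tau_{z_\beta}(w_i) - a\circ \tau_x(w_i)| < \varepsilon/3$ for each $i = 1, \dots, N$. For any $w \in K$, pick $w_i$ with $|w - w_i| < \delta$ and apply Step 2 (to both $a\circ \tau_{z_\beta}$ and the continuous limit $a\circ \tau_x$) to obtain
\begin{equation*}
|a\circ \tau_{z_\beta}(w) - a\circ \tau_x(w)| \;\leq\; \tfrac{\varepsilon}{3} + \tfrac{\varepsilon}{3} + \tfrac{\varepsilon}{3} \;=\; \varepsilon
\end{equation*}
for all $\beta \geq \beta_0$, which is the desired uniform convergence on $K$.

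The only delicate point is that $a\circ \tau_x$ is genuinely continuous in $w$, which is exactly what was established just before the lemma and which guarantees the $\varepsilon/3$ estimate for the limit term in Step 3; no other obstacle is expected.
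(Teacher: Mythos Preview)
Your proof is correct and follows essentially the same approach as the paper: both use pointwise convergence (from the weak${}^*$ topology), equicontinuity in $w$ (from uniform continuity of $a$), and continuity of the limit $w\mapsto a\circ\tau_x(w)$. The only cosmetic difference is that the paper argues by contradiction (extracting a convergent subnet of bad points $\xi_\beta\in K$), whereas you give the direct $\varepsilon/3$ argument via a finite cover of $K$.
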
 \begin{proof} Since $a\circ \tau_w \in
\mathcal{A}$ for all $w \in \mathbb{C}^n$, it follows by definition of the convergence $z_{\beta}
\rightarrow x$ in $M_{\mathcal{A}}$ that $a\circ \tau_{z_{\beta}}(w)=\delta_{z_{\beta}}(a\circ
\tau_w) \rightarrow x(a\circ \tau_w)=a\circ \tau_x(w)$ for all $a\in \mathcal{A}$ and $w \in
\mathbb{C}^n$. Assume that the above convergence is not uniform on compact subsets of
$\mathbb{C}^n$. Then there is $\epsilon >0,$ a function $a\in \mathcal{A},$ and a compact set $K \in
\mathbb{C}^n$ such that for all $\gamma,$ there is $\beta >\gamma$ and $\xi_{\beta}\in K$ with
\begin{equation}\label{estimate_from_above} \left| (a\circ \tau_{z_{\beta}})(\xi_{\beta})-(a\circ
\tau_x)(\xi_{\beta})\right|>\varepsilon. \tag{5.1} \end{equation} By passing to a subnet, we can
assume that $\xi_{\beta}\rightarrow \xi \in K$. Now, we have \begin{align*} | (a\circ
\tau_{z_{\beta}})(\xi_{\beta})& -(a\circ \tau_x)(\xi_{\beta})|\leq \left|(a\circ
\tau_{z_{\beta}})(\xi_{\beta})-(a\circ \tau_{z_{\beta}})(\xi)\right| \nonumber \\ & +\left|(a\circ
\tau_{z_{\beta}})(\xi)-(a\circ \tau_x)(\xi) \right|+ \left|(a\circ \tau_x)(\xi) -(a\circ
\tau_x)(\xi_{\beta})\right|. \end{align*} Since
$|\tau_{z_{\beta}}(\xi_{\beta})-\tau_{z_{\beta}}(\xi)|=|\xi_{\beta}-\xi|$ it follows that the first
term on the right hand side tends to zero. The third term on the right tends to zero by the
continuity of $\tau_x: \mathbb{C}^n\rightarrow M_{\mathcal{A}}$, and the second term tends to zero
by what was said at the beginning of the proof. We obtain a contradiction to $(5.1)$ and the lemma
is proven. \end{proof}

For $w \in \mathbb{C}^n$, let $C_\alpha (w)$ be the
``weighted shifts'' defined in Section $3$.  If $1\leq p\leq \infty$ and $A$ is a fixed bounded
operator on $F_{\alpha}^p$, then we write $A_w:=C_{\alpha}(w)AC_{\alpha}(-w)$,  which induces a map
$\Psi_A: \mathbb{C}^n \rightarrow \mathcal{L}(F_{\alpha}^p)$ defined by $\Psi_A(w):=A_w$. Since
\begin{equation*} \big{[}C_{\alpha}(-w)K_{\alpha}(\cdot ,
\xi)\big{]}(z)=K_{\alpha}(z,\xi-w)K_{\alpha}(w,\xi)e^{-\frac{\alpha}{2}|w|^2}, \end{equation*} we
have that \begin{equation} B_\alpha \circ \Psi_A(w)=B_\alpha (A_w)=B_\alpha (A)\circ \tau_w.
\tag{5.2} \end{equation}

Let $E$ be a metric space and let $f: \mathbb{C}^n\rightarrow E$. Consider the (possibly empty)
multi-valued function on $M_{\mathcal{A}}$ defined by \begin{equation*} F(x_0):=\big{\{} \lambda :
f(z_{\beta})\rightarrow \lambda : \mbox{for some net } \; z_{\beta}\rightarrow x_0\: \mbox{and } \;
z_{\beta}\in \mathbb{C}^n\big{\}} \end{equation*} where $x_0 \in M_{\mathcal{A}}$. We will say that
$F$ is ``single valued'' if for any $x_0\in M_{\mathcal{A}}$ and any convergent net
$z_{\beta}\rightarrow x_0$ where $z_{\beta} \subset \C$, the net $(f(z_{\beta}))_{\beta}$ converges in $E$.

\begin{lemma} Assume that $F$ is single valued for all $x_0\in M_{\mathcal{A}}$. Then
$F:M_{\mathcal{A}}\rightarrow E$ is well-defined and continuous. \end{lemma}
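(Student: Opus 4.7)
The plan is to prove well-definedness and continuity separately, both by clever diagonal/combination net arguments that exploit the density of $\mathbb{C}^n$ in $M_{\mathcal{A}}$ together with the single-valuedness hypothesis.

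For well-definedness, I would take two nets $(z_\beta)_\beta$ and $(w_\gamma)_\gamma$ in $\mathbb{C}^n$ both converging to a given $x_0 \in M_{\mathcal{A}}$ with $f(z_\beta) \to \lambda_1$ and $f(w_\gamma) \to \lambda_2$, and form a single combined net $(u_\eta)_\eta$ indexed by the product of the two directed sets by interleaving (or by the disjoint union with the natural order that makes both original index sets cofinal). Since each of the constituent nets converges to $x_0$ in $M_{\mathcal{A}}$, the combined net also does. The single-valuedness hypothesis then forces $(f(u_\eta))_\eta$ to converge in $E$, and since $\lambda_1, \lambda_2$ are limits of subnets of $(f(u_\eta))_\eta$, uniqueness of subnet limits yields $\lambda_1 = \lambda_2$. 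Hence $F$ is a genuine function $F:M_{\mathcal{A}} \to E$.

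For continuity, suppose $(x_\alpha)_\alpha$ is a net in $M_{\mathcal{A}}$ with $x_\alpha \to x_0$, and argue by contradiction, assuming after passing to a subnet that $d(F(x_\alpha), F(x_0)) > \varepsilon$ for some $\varepsilon > 0$ and all $\alpha$. The key construction is a diagonal net indexed by triples $(\alpha, U, n)$, where $U$ is a neighborhood of $x_0$ satisfying $x_\alpha \in U$ and $n \in \mathbb{N}$, ordered componentwise by $\alpha' \geq \alpha$, $U' \subseteq U$, and $n' \geq n$. For each such triple, using that $F(x_\alpha)$ is the limit of $f$ along some net in $\mathbb{C}^n$ converging to $x_\alpha$, and that $\mathbb{C}^n$ is dense in $M_{\mathcal{A}}$, I can select $z_{(\alpha,U,n)} \in U \cap \mathbb{C}^n$ with $d(f(z_{(\alpha,U,n)}), F(x_\alpha)) < 1/n$.

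By construction the net $(z_{(\alpha,U,n)})$ lies in $\mathbb{C}^n$ and is eventually inside every neighborhood of $x_0$, so $z_{(\alpha,U,n)} \to x_0$ in $M_{\mathcal{A}}$. The single-valuedness hypothesis then forces $f(z_{(\alpha,U,n)}) \to F(x_0)$, and combining this with $d(f(z_{(\alpha,U,n)}), F(x_\alpha)) < 1/n$ yields $F(x_\alpha) \to F(x_0)$ along the indexing net of triples, contradicting the choice $d(F(x_\alpha), F(x_0)) > \varepsilon$. The main technical obstacle is the non-metrizability of $M_{\mathcal{A}}$, which prevents simple sequential arguments and forces the three-parameter diagonal net above; one must take care that the index set is genuinely directed and that the selection of $z_{(\alpha,U,n)}$ inside $U$ is always possible, which is precisely where the density of $\mathbb{C}^n \hookrightarrow M_{\mathcal{A}}$ via $x \mapsto \delta_x$ enters.
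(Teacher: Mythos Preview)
Your argument is correct, and in fact it amounts to a direct proof of the special case of Bourbaki's extension theorem that the paper invokes. The paper's own proof is a single sentence: since $\mathbb{C}^n$ is dense in $M_{\mathcal{A}}$ and $E$, being a metric space, is regular, the hypothesis that every net $(z_\beta)\subset\mathbb{C}^n$ with $z_\beta\to x_0$ has $(f(z_\beta))$ convergent is exactly the existence of $\lim_{z\to x_0,\,z\in\mathbb{C}^n} f(z)$, and Bourbaki's extension theorem (Theorem~1, p.~81 of \cite{B}) then yields a unique continuous extension $F:M_{\mathcal{A}}\to E$.

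So the two approaches differ only in packaging: the paper appeals to a classical black box, while you unpack that box by hand via interleaved and diagonal nets. Your route has the advantage of being self-contained and making the role of density and single-valuedness explicit; the paper's route is shorter and identifies the lemma as an instance of a standard principle. One point worth spelling out in your write-up: the directedness of your index set of triples $(\alpha,U,n)$ is not automatic, since given $(\alpha_1,U_1,n_1)$ and $(\alpha_2,U_2,n_2)$ one needs some $\alpha_3\ge\alpha_1,\alpha_2$ with $x_{\alpha_3}\in U_1\cap U_2$; this is exactly where the convergence $x_\alpha\to x_0$ must be invoked. With that detail made explicit (and with $U$ restricted to \emph{open} neighborhoods so that $x_\alpha\in U$ makes $U$ a neighborhood of $x_\alpha$), your argument is complete.
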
 \begin{proof} Since $\C$ is dense in $\mathcal{A}$ and $E$ (as a metric space) is regular, the result follows immediately by our hypotheses and Bourbaki's extension theorem (Theorem $1$, p. 81 in \cite{B}). \end{proof}

 \noindent Let  $ B_1(F_\alpha ^p)$ denote the unit ball (in the norm topology)
of the space of bounded operators on $F_\alpha ^p$.  If ``SOT'' refers to the strong operator
topology, then recall that $(B_1 (F_\alpha ^p), \SOT)$ is a complete metric space since $F_\alpha
^p$ is separable. In the next proposition we will fix $A \in \mathcal{T}_p ^\alpha$ and without loss of
generality assume that $A \in  B_1(F_\alpha ^p)$. \begin{proposition} If $1 \leq p < \infty$ and
$A\in \mathcal{T}_p ^\alpha$, then $\Psi_A:\mathbb{C}^n\rightarrow (B_1(F_\alpha ^p), \SOT)$ extends
continuously to $M_{\mathcal{A}}$. \end{proposition}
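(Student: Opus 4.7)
The plan is to apply Lemma 5.2 with $E = (B_1(\Fp), \SOT)$, which is a complete metric space because $\Fp$ is separable. Thus I need only verify single-valuedness, i.e., that for every $x_0 \in M_{\mathcal{A}}$ and every net $(z_\beta) \subset \C$ converging to $x_0$ in $M_{\mathcal{A}}$, the net $A_{z_\beta} = C_\alpha(z_\beta) A C_\alpha(-z_\beta)$ converges in $\SOT$. Since (by the remarks opening Section 5 combined with Theorem 3.7) $\mathcal{T}_p^\alpha$ coincides with the closed algebra generated by $\{\Tf : f \in \mathcal{A}\}$, I would first establish single-valuedness for generators $A = \Tf$ with $f \in \mathcal{A}$, and then verify that the class $\mathcal{G}$ of operators for which $\Psi_A$ extends continuously to $M_{\mathcal{A}}$ is stable under linear combinations, operator products, and norm limits.

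For the base case, a direct computation entirely analogous to those in Section 3 (relating $C_\alpha(w)$ to $T_{s_w}^\alpha$) yields the conjugation identity $C_\alpha(w) \Tf C_\alpha(-w) = T_{f \circ \tau_w}^\alpha$, so that $A_w = T_{f \circ \tau_w}^\alpha$. For a fixed $x_0 \in M_\mathcal{A}$, define the bounded continuous function
\begin{equation*}
f_{x_0}(u) := x_0(f \circ \tau_u) = (f \circ \tau_{x_0})(u), \qquad u \in \C,
\end{equation*}
which satisfies $\|f_{x_0}\|_\infty \leq \|f\|_\infty$ and is continuous by composing the continuous map $\tau_{x_0} : \C \to M_\mathcal{A}$ (noted just before Lemma 5.1) with the Gelfand transform $\hat f \in C(M_\mathcal{A})$. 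The candidate limit is $\Psi_A(x_0) := T_{f_{x_0}}^\alpha$. To prove $T_{f \circ \tau_{z_\beta}}^\alpha g \to T_{f_{x_0}}^\alpha g$ in $\Fp$ for each $g \in \Fp$, I would use the boundedness of $P_\alpha : \Lp \to \Fp$ to reduce to showing $(f \circ \tau_{z_\beta} - f_{x_0}) g \to 0$ in $\Lp$. Splitting the $\Lp$ integral over a ball $B(0, R)$ and its complement, the tail is bounded uniformly in $\beta$ by $(2 \|f\|_\infty)^p \int_{|u| > R} |g(u)|^p e^{-\alpha p |u|^2/2} \, dv(u)$, which is smaller than any prescribed $\varepsilon$ once $R$ is large; on the compact set $\overline{B(0, R)}$ the difference $f \circ \tau_{z_\beta} - f_{x_0}$ tends to $0$ uniformly by Lemma 5.1, completing the $\varepsilon/2 + \varepsilon/2$ argument.

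For the inductive step, linear combinations are immediate from the joint continuity of addition and scalar multiplication in $\SOT$. For products, I would use $(AB)_w = A_w B_w$ together with the elementary fact that if $A_{z_\beta} \to \widehat A$ and $B_{z_\beta} \to \widehat B$ in $\SOT$ with $\sup_\beta \|A_{z_\beta}\| < \infty$, then $A_{z_\beta} B_{z_\beta} \to \widehat A \widehat B$ in $\SOT$; this applies here because $\|A_w\| = \|A\|$ and $\|B_w\| = \|B\|$ for all $w \in \C$. For norm limits, observe that $\|A_{n, w} - A_w\|_{\mathrm{op}} = \|A_n - A\|_{\mathrm{op}}$ independently of $w$, so if $A_n \to A$ in operator norm and each $\widehat{\Psi}_{A_n}$ is a continuous extension on $M_\mathcal{A}$, then $(\widehat{\Psi}_{A_n}(x))_n$ is operator-norm Cauchy uniformly in $x \in M_\mathcal{A}$ and converges to a well-defined $\widehat{\Psi}_A(x)$ with $\widehat{\Psi}_{A_n} \to \widehat{\Psi}_A$ uniformly in $\SOT$, yielding continuity of $\widehat{\Psi}_A$ and agreement with $\Psi_A$ on $\C$.

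The main obstacle is the base case: one cannot directly invoke the dominated convergence theorem since we work with a net rather than a sequence. Lemma 5.1 is designed precisely for this purpose, as its uniform-on-compacts convergence statement holds for nets; the split-over-a-ball argument above then replaces dominated convergence and yields the required SOT convergence.
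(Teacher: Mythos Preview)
Your proposal is correct and follows essentially the same route as the paper: reduce via Lemma~5.2 to showing that $\Psi_A(z_\beta)$ converges in $\SOT$ for nets $z_\beta \to x$, establish the conjugation identity $(T_f^\alpha)_w = T_{f\circ\tau_w}^\alpha$, use Lemma~5.1 to get $T_{f\circ\tau_{z_\beta}}^\alpha \to T_{f\circ\tau_x}^\alpha$ in $\SOT$, and then pass from single Toeplitz operators to all of $\mathcal{T}_p^\alpha$ via products, sums, and norm limits. The only cosmetic differences are that the paper derives the conjugation identity via injectivity of the Berezin transform rather than by direct computation, and organizes the reduction as an $\epsilon$-approximation (Cauchy net) argument rather than your bottom-up closure of the class $\mathcal{G}$; your split-over-a-ball argument is precisely the content of the paper's ``easy to check'' claim that $T_{f_\beta}^\alpha \to T_f^\alpha$ in $\SOT$ under uniform-on-compacts convergence.
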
 \begin{proof} Consider the multi-valued
function \begin{equation*} {\bf \Psi}_A(x):=\big{\{} \lambda \: : \: \Psi_A(z_{\gamma})\rightarrow
\lambda \mbox{ for some net } \: z_{\gamma}\rightarrow x \mbox{ and } z_{\gamma}\in \mathbb{C}^n
\big{\}} \end{equation*} where $x\in M_{\mathcal{A}}$. According to Lemma $5.2$ we need to show that
${\bf \Psi}_A(x)$ is single valued. Since $(B_1(F_\alpha ^p), \SOT)$ is a complete metric space, it
is sufficient to show that $\{\Psi_A (z_{\gamma})\}_{\gamma}$ is a Cauchy net whenever
$\{z_{\gamma}\}_{\gamma}  \subset \mathbb{C}^n$ is a net converging to some $x \in M_\mathcal{A}$.

To that end, let $\{z_{\gamma}\}_{\gamma}  \subset \mathbb{C}^n$ be a net that converges to
$x\in M_{\mathcal{A}}$.  Let $A\in \mathcal{T}_p ^\alpha$ and pick $\epsilon >0$. Choose $R$ in the
(non-closed) algebra generated by $\{T_f ^\alpha : f \in \mathcal{A}\}$ with $\|R\|_{F_\alpha ^p
\rightarrow F_\alpha ^p} \leq 1$ such that $\|A-R\|_{F_{\alpha}^p\rightarrow
F_{\alpha}^p}<\epsilon$.  Then for all $f\in F_{\alpha}^p$, we have that \begin{align*}
\big{\|}\big{[} & \Psi_A  (z_{\gamma})-\Psi_A(z_{\beta})\big{]}f\big{\|}_{F_p^{\alpha}\rightarrow
F_p^{\alpha}} \\ &\leq \Big{[}\|A_{z_{\gamma}}-R_{z_{\gamma}}\|_{F_{\alpha}^p\rightarrow
F_{\alpha}^p} +\|R_{z_{\beta}}-A_{z_{\beta}}\|_{F_{\alpha}^p\rightarrow
F_{\alpha}^p}\Big{]}\|f\|_{\alpha,p}+\|[ R_{z_{\gamma}}-R_{z_{\beta}}]f\|_{\alpha,p}\\ &\leq
2\epsilon \|f\|_{\alpha,p} + \big{\|}\big{[}
\Psi_R(z_{\gamma})-\Psi_R(z_{\beta})\big{]}f\big{\|}_{\alpha,p}. \end{align*} Therefore, it is
sufficient to show that $\{\Psi_R(z_{\gamma})\}_{\gamma}$ is a Cauchy net with respect to the SOT
for all $R$ in the algebra generated by $\{T_f ^\alpha : f \in \mathcal{A}\}$ with $\|R\|_{F_\alpha
^p \rightarrow F_\alpha ^p} \leq 1$. Moreover, by linearity and Proposition $3.6$, we can assume
that $R=T^{\alpha}_{a_1}T^{\alpha}_{a_2}\cdots T^{\alpha}_{a_m}$ is a finite product of Toeplitz
operators with symbols in $a_j\in \mathcal{A}$ where $\|a_j\|_{\infty} \leq 1$. Since the product of
convergent  nets in $(B_1(F_\alpha ^p), \SOT)$ is convergent, is it sufficient to show that $\{
\Psi_{T_a^{\alpha}}(z_{\gamma})\}_{\gamma}$ for all $a\in \mathcal{A}$ with $\|a\|_{\infty} \leq 1$
has a limit in $(B_1(F_\alpha ^p), \SOT)$. Now if $w\in \mathbb{C}^n$, then \begin{align*}
B\big{(}T_{a\circ \tau_w}^{\alpha}\big{)}(z) & =\widetilde{(a\circ \tau_w)}^{(\frac{1}{4\alpha})}(z)
=\widetilde{a}^{(\frac{1}{4\alpha})}\circ \tau_w(z) \nonumber \\ &
=B\big{(}T_{a}^{\alpha}\big{)}\circ \tau_w(z) =B\big{(}(T_{a}^{\alpha})_w\big{)}(z) \end{align*}
where we have used standard properties of the heat transform together with $(5.2)$ in the last
equality. Since the Berezin transform is one-to-one on operators, it follows that \begin{equation*}
\Psi_{T_a^{\alpha}}(z_{\gamma})=\left( T_a^{\alpha}\right)_{z_{\gamma}}=T_{a\circ
\tau_{z_{\gamma}}}^{\alpha}. \end{equation*} Let $f \in \mathcal{A}$ and let
$(f_{\beta})_{\beta}\subset \mathcal{A}$ be a net that converges to $f$ uniformly on compact subsets
of $\mathbb{C}^n$. Then it is easy to check that $T_{f_{\beta}}^{\alpha}\rightarrow T_f^{\alpha}$ in
SOT. According to Lemma \ref{lemma_uniformly_compact_convergence}, we have the uniform compact
convergence $a\circ \tau_{z_{\gamma}}\rightarrow a\circ\tau_x$, and therefore
$\Psi_{T^{\alpha}_a}(z_{\gamma}) =T_{a\circ \tau_{z_{\gamma}}}^{\alpha}\rightarrow
T^{\alpha}_{a\circ \tau_x}$ in SOT. \end{proof}

The proof of the following corollary follows precisely from the proof of Proposition $5.3$, though
it will be useful to record it for future use. \begin{corollary}Suppose that $1 <  p < \infty$ and $S \in \mathcal{T}_p ^\alpha$. If $x \in M_\mathcal{A} \backslash
\mathbb{C}^n$ and $\{z_\gamma\}_\gamma$ is a net converging to $x$, then $ S_x = \lim_\gamma
S_{z_\gamma}$ where the limit is taken in the strong operator topology. \end{corollary}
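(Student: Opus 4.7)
The plan is to observe that Corollary $5.4$ is essentially a repackaging of the content of Proposition $5.3$, now phrased pointwise rather than as a statement about the global continuous extension. Nearly all of the work has already been done, so I would simply unwind the construction used to extend $\Psi_S$ from $\mathbb{C}^n$ to $M_{\mathcal{A}}$.

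First, I would recall that the extension $\Psi_S: M_{\mathcal{A}} \rightarrow (B_1(F_\alpha^p), \SOT)$ produced in Proposition $5.3$ is obtained via Lemma $5.2$ (Bourbaki's extension theorem) applied to the multi-valued function ${\bf \Psi}_S$. The heart of that argument, carried out by approximating $S \in \mathcal{T}_p^\alpha$ in operator norm by elements $R$ of the non-closed algebra generated by $\{T_a^\alpha : a \in \mathcal{A}\}$ and then reducing via $B_\alpha(T_{a \circ \tau_w}^\alpha) = \widetilde{a}^{(\frac{1}{4\alpha})} \circ \tau_w$ to the uniformly-on-compacta convergence in Lemma $5.1$, shows that for every net $\{z_\gamma\} \subset \mathbb{C}^n$ converging to $x \in M_{\mathcal{A}}$, the net $\{S_{z_\gamma}\} = \{\Psi_S(z_\gamma)\}$ is Cauchy in the complete metric space $(B_1(F_\alpha^p), \SOT)$.

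Second, I would invoke the single-valuedness of ${\bf \Psi}_S$ (verified inside the proof of Proposition $5.3$) to conclude that the resulting SOT-limit is independent of the particular net chosen and agrees with the extended value $\Psi_S(x)$, which by definition is $S_x$. Specializing to $x \in M_{\mathcal{A}} \setminus \mathbb{C}^n$ immediately gives $\lim_\gamma S_{z_\gamma} = S_x$ in SOT, which is the claim. There is no real obstacle to overcome here beyond what was already handled in Proposition $5.3$; the corollary simply extracts the Cauchy-convergence argument from the body of that proposition and records it in the pointwise form that will be convenient in Section $6$.
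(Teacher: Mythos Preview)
Your proposal is correct and matches the paper's own treatment: the paper states explicitly that the corollary ``follows precisely from the proof of Proposition $5.3$'' and gives no further argument. You have simply made explicit the Cauchy-net and single-valuedness steps already contained in that proof, which is exactly what is needed.
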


\section{Proof of Theorem $1.1$} Finally in this section we will give a proof of Theorem $1.1$. As
in \cite{S}, the proof will follow easily from quantitative estimates on the essential norm
$\|A\|_{\e}$ for operators $A \in \mathcal{T}_p ^\alpha$.  We will first prove the following simple lemma
and then state some definitions that are needed for the proof of Theorem $1.1$.

\begin{lemma} Let $A\in \mathcal{T}_p ^\alpha$, then $\textup{(a)}$ and $\textup{(b)}$ below are equivalent:
\begin{itemize} \item[\textup{(a)}] $B_\alpha (A)(z)\rightarrow 0$ as $|z|\rightarrow \infty$.
\item[\textup{(b)}] $A_x=0$ for all $x\in M_{\mathcal{A}}\setminus \mathbb{C}^n$. \end{itemize}
\end{lemma}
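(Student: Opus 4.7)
The plan is to combine two tools already established in the excerpt: identity $(5.2)$, which reads $B_\alpha(A_w)(z) = B_\alpha(A)(z-w)$, and Corollary $5.4$, which says that if $z_\gamma \to x$ in $M_\mathcal{A}$ then $A_{z_\gamma} \to A_x$ in the strong operator topology. Before applying them, I would record the following topological observation: since $\mathcal{A}$ contains the bounded uniformly continuous functions on $\C$, the map $z \mapsto \delta_z$ is a homeomorphism of $\C$ onto its image, and because $M_\mathcal{A}$ is compact, a net $\{z_\gamma\} \subset \C$ converges to a point of $M_\mathcal{A} \setminus \C$ if and only if no subnet stays bounded, i.e.\ $|z_\gamma| \to \infty$; conversely any sequence $\{z_n\} \subset \C$ with $|z_n| \to \infty$ admits subnets that converge in $M_\mathcal{A}$ to points outside $\C$.

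For (a) $\Rightarrow$ (b): fix $x \in M_\mathcal{A} \setminus \C$ and any net $z_\gamma \to x$, so $|z_\gamma| \to \infty$ by the observation above. For each fixed $w \in \C$, identity $(5.2)$ together with (a) gives $B_\alpha(A_{z_\gamma})(w) = B_\alpha(A)(w - z_\gamma) \to 0$. On the other hand, Corollary $5.4$ yields $A_{z_\gamma} k_w^\alpha \to A_x k_w^\alpha$ in $F_\alpha^p$, and since $k_w^\alpha$ also lies in $F_\alpha^q$ the pairing against $k_w^\alpha$ is continuous, so
\[
B_\alpha(A_{z_\gamma})(w) = \langle A_{z_\gamma} k_w^\alpha, k_w^\alpha \rangle_\alpha \longrightarrow \langle A_x k_w^\alpha, k_w^\alpha \rangle_\alpha = B_\alpha(A_x)(w).
\]
Comparing the two limits shows $B_\alpha(A_x) \equiv 0$, and injectivity of the Berezin transform on bounded operators (noted in the introduction) forces $A_x = 0$.

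For (b) $\Rightarrow$ (a), argue by contradiction. If (a) fails there exist $\epsilon > 0$ and a sequence $z_n \in \C$ with $|z_n| \to \infty$ and $|B_\alpha(A)(z_n)| \geq \epsilon$. Apply the topological observation to $\{-z_n\}$ to extract a subnet converging to some $x \in M_\mathcal{A} \setminus \C$. Hypothesis (b) gives $A_x = 0$, so Corollary $5.4$ forces $A_{-z_n} \to 0$ in SOT along the subnet. Pairing with $1 \in F_\alpha^q$ and invoking $(5.2)$ at $z = 0$, $w = -z_n$,
\[
B_\alpha(A)(z_n) = B_\alpha(A_{-z_n})(0) = \langle A_{-z_n}\cdot 1, 1 \rangle_\alpha \longrightarrow 0
\]
along the subnet, which contradicts $|B_\alpha(A)(z_n)| \geq \epsilon$.

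The argument is short once the preliminary results of Section $5$ are in place; the only mildly delicate point is the topological characterization of nets that escape $\C$ inside $M_\mathcal{A}$, and this is a direct consequence of the fact that $\mathcal{A}$ contains enough uniformly continuous functions to separate bounded points from the fiber at infinity. Everything else is a routine application of $(5.2)$, Corollary $5.4$, and the injectivity of the Berezin transform.
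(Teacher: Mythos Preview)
Your proof is correct and follows essentially the same approach as the paper: both directions hinge on identity $(5.2)$, Corollary $5.4$, and the injectivity of the Berezin transform, and the contradiction argument for (b)$\Rightarrow$(a) via compactness of $M_{\mathcal{A}}$ is the same. The only cosmetic differences are that you make the topological characterization of nets escaping $\mathbb{C}^n$ explicit (the paper simply asserts ``we can assume $|z_\gamma|\to\infty$''), and in (b)$\Rightarrow$(a) you pass to a subnet of $\{-z_n\}$ whereas the paper works with the original sequence and reuses the displayed estimate $(6.1)$ from the first half of the proof.
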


\begin{proof} Obviously we may assume that $\|A\|_{F_\alpha ^p \rightarrow F_\alpha ^p} \leq 1$.
$\textup{(a)}\Rightarrow \textup{(b)}$: Let $x\in M_{\mathcal{A}}\setminus \mathbb{C}^n$ and
$(z_{\gamma})_{\gamma}$ be a net with $z_{\gamma}\rightarrow x$. According to Corollary $5.4$, we
have for all fixed $\xi \in \mathbb{C}^n$ that: \begin{align} \left| B_\alpha
(A_{z_{\gamma}})(\xi)-B_\alpha (A_x)(\xi)\right| & =\left|\left\langle
\big{[}A_{z_{\gamma}}-A_x\big{]} k_{\xi}^{\alpha},k_{\xi}^{\alpha}\right\rangle_\alpha \right|
\nonumber \\ & \leq \left\|\left[
A_{z_{\gamma}}-A_x\right]k^{\alpha}_\xi\right\|_{\alpha,p}\stackrel{\gamma}{\longrightarrow} 0.
\tag{6.1} \end{align} \noindent Combining $(6.1)$ with $(5.2)$ tells us that \begin{align*}
\left|B_\alpha (A)(\xi-z_{\gamma})\right|=\left|B_\alpha
(A_{z_{\gamma}})(\xi)\right|\stackrel{\gamma}{\longrightarrow} \left|B_\alpha (A_x)(\xi)\right|.
\end{align*} Since $x\in M_{\mathcal{A}}\setminus \mathbb{C}^n$, we can assume that
$|z_{\gamma}|\rightarrow \infty$ and from the condition $\lim_{|z|\rightarrow \infty}B_\alpha
(A)(z)=0$ we conclude that $B_\alpha (A_x)(\xi)=0$ for all $\xi \in \mathbb{C}^n$. Since $B_\alpha$ is
one-to-one on the bounded operators on $F_{\alpha}^p$ it follows that $A_x=0$. \vspace{1ex}\\
$\textup{(b)}\Rightarrow \textup{(a)}$: Assume that there is a sequence $(z_k)_k\subset\mathbb{C}^n$
such that $|z_k|\rightarrow \infty$ and \begin{equation} |B_\alpha (A_{z_k})(0)|=|B_\alpha
(A)(-z_k)|\geq \delta >0. \tag{6.2} \end{equation} \noindent Since $M_{\mathcal{A}}$ is compact there
is a subnet $(z_{\gamma})_{\gamma}$ of $(z_k)_k$ and $x\in M_{\mathcal{A}}$ such that
$z_{\gamma}\rightarrow x \in M_{\mathcal{A}}$. From $(6.1)$ with $\xi = 0$ and $(6.2)$, we get that
$|B_\alpha (A_x)(0)|\geq \delta >0$, which says that $A_x\ne 0$ as desired. \end{proof}

 Now for any bounded operator $S$ on $F_\alpha ^p$ and any $r > 0$, let \begin{align} \alpha_S (r)
 := \underset{|z| \rightarrow \infty}{\limsup } \ \sup \{ \|Sf\|_{\alpha, p} : f \in T_{\chi_{B(z,
 r)} \nu} ^\alpha (F_\alpha ^p), \|f\|_{\alpha, p} \leq 1 \}. \nonumber \end{align}   An easy application of Lemma $4.1$ tells us that $T_{\chi_{B(z, r_1)} \nu} ^\alpha (F_\alpha ^p) \subset
 T_{\chi_{B(z, r_2)} \nu} ^\alpha   (F_\alpha ^p)$ when $r_1 < r_2$, which means that $\alpha_S (r)$ is an increasing function of $r$. In particular, recall from the end of Section $4$ that  \begin{equation*} \nu = \sum_{\sigma \in \epsilon_0 \mathbb{Z}^{2n} } \delta_\sigma \end{equation*} for $\epsilon_0$ fixed so that $T_\nu ^\alpha$ is invertible.  Now if $h \in T_{\chi_{B(z, r_1)} \nu} ^\alpha (F_\alpha ^p)$ then \begin{equation*} h(w) = \sum_{\sigma \in \epsilon_0 \mathbb{Z}^{2n} \cap B(z, r_1)} g(\sigma) e^{\alpha (w \cdot \overline{\sigma}) - \alpha |\sigma|^2} \end{equation*} for some $g \in F_\alpha ^p$.  For each $\sigma \in \epsilon_0 \mathbb{Z}^{2n} \cap B(z, r_1)$, Lemma $4.1$ allows us to pick some $g_\sigma \in F_\alpha ^p$ where $g_\sigma (\sigma') = \delta_{\sigma, \sigma'}$ for any $\sigma' \in \epsilon_0 \mathbb{Z}^{2n} \cap B(z, r_2)$.   Thus, if \begin{equation*} \widetilde{g} := \sum_{ \sigma \in \epsilon_0 \mathbb{Z}^{2n} \cap B(z, r_1)} g(\sigma) g_\sigma, \end{equation*} then $\widetilde{g} \in F_\alpha ^p$ and clearly $h = T_{\chi_{B(z, r_2)} \nu} ^\alpha \widetilde{g}$. Note that since $\alpha_S (r) \leq \|S\|$ for all $r$, we have \begin{align} \alpha_S  := \underset{r \rightarrow \infty}{\lim} \alpha_S (r) = \underset{r >
 0}{\sup} \ \alpha_S (r) \leq \|S\|. \nonumber \end{align} 	

The proof of Theorem $1.1$ will follow easily from Lemma $6.1$ and the following result, Theorem
$6.2$.  In the statement and proof of this result, we will use the symbol $``\approx"$ to indicate
that two quantities are equivalent with constants only depending on $\alpha, \epsilon_0, p$, and
$n$. Moreover, $C$ will denote a constant depending only on $\alpha, \epsilon_0, p, $ and $n$, and
can possibly change from line to line. \begin{theorem} Let $2 \leq p < \infty$ and let $A \in
\mathcal{T}_p ^\alpha.$ If $\|A\|_{\e}$ denotes the essential norm of $A$, then $\|A\|_{\e}$ is equivalent
to the following quantities (with constants depending on only $\alpha, \epsilon_0, p, $ and $n$)

\begin{list}{}{\setlength\parsep{0in}} \item $(i) \ \alpha_A$,
 \item $(ii) \  \beta_A := \sup_{d> 0} \  \lim \sup_{|z| \rightarrow \infty} \|M_{\chi_{B(z, d)}} A
     \|_{F_\alpha ^p \rightarrow L_\alpha ^p },$
\item $(iii) \  \gamma_A := \lim_{r \rightarrow \infty}  \|M_{\chi_{B(0, r) ^c} } A \|_{F_\alpha ^p
    \rightarrow L_\alpha ^p}$ where $B(0, r) ^c = \mathbb{C}^n \backslash B(0, r) $. \end{list}
    Moreover, for all $1 < p < \infty$, we have that \begin{equation} \|A\|_{\e} \approx \underset{x
    \in M_{\mathcal{A}} \backslash \mathbb{C}^n}{\sup} \|A_x\|_{F_\alpha ^p \rightarrow F_\alpha
    ^p}. \nonumber \end{equation}

\end{theorem}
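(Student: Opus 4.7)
The main equivalences for $2 \leq p < \infty$ will be obtained by closing the chain
\[
\|A\|_{\e} \gtrsim \gamma_A \gtrsim \beta_A \gtrsim \alpha_A \gtrsim \|A\|_{\e},
\]
and the characterization $\|A\|_{\e} \approx \sup_{x \in M_{\mathcal{A}} \setminus \C} \|A_x\|$ will then be deduced for all $1 < p < \infty$ by combining this chain with Corollary $5.4$, and passing from $1 < p < 2$ to $q > 2$ via duality.

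For the three ``downward'' inequalities, I would argue as follows. If $K$ is finite rank then $\|M_{\chi_{B(0,r)^c}} K\|_{\Fp \to \Lp} \to 0$ by dominated convergence on each rank-one summand; this extends to all compact $K$ by norm approximation, and the trivial bound $\|M_{\chi_{B(0,r)^c}} A\| \leq \|A - K\| + \|M_{\chi_{B(0,r)^c}} K\|$ yields $\gamma_A \leq \|A\|_{\e}$ after $r \to \infty$ and $\inf$ over compact $K$. The inclusion $B(z,d) \subset B(0, |z|-d)^c$ immediately gives $\gamma_A \geq \beta_A$. For $\beta_A \gtrsim \alpha_A$, I would take $f \in T_{\chi_{B(z,r)}\nu}^\alpha(\Fp)$ with $\|f\|_{\alpha,p} = 1$ and split $\|Af\|_{\alpha,p}^p$ over $B(z,R)$ and $B(z,R)^c$ with $R \gg r$: the first piece is $\leq \|M_{\chi_{B(z,R)}} A\|^p$ (hence controlled by $\beta_A$ in the $\limsup_{|z|\to\infty}$), while the second is the off-diagonal tail of $A$ acting on a linear combination of normalized reproducing kernels localized in $B(z,r)$, and is handled by a Schur-test estimate in the spirit of Lemma $2.6$, giving smallness that is uniform in $z$ and vanishes as $R \to \infty$.

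The main obstacle is the reverse bound $\alpha_A \gtrsim \|A\|_{\e}$. Applying Lemma $3.3$ to $A$ with the fixed Fock-Carleson measure $\nu$ from Lemma $4.3$, for any $\epsilon > 0$ I obtain a decomposition
\[
A T_\nu^\alpha = \sum_j M_{\chi_{F_j}} A T_{\chi_{G_j}\nu}^\alpha + E, \qquad \|E\|_{\Fp \to \Lp} \leq \epsilon,
\]
with pairwise disjoint $F_j$, $\diam(G_j) \leq d = d(A,\epsilon)$, and $\sum_j \chi_{G_j} \leq 4^{2n}$. At scale $R$, the partial sum over $J_R := \{j : F_j \cap B(0,R) \neq \emptyset\}$ is finite-rank (each $T_{\chi_{G_j}\nu}^\alpha$ is finite-rank, since $G_j$ is bounded and $\nu$ is a sum of point masses) hence compact. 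For $j \in J_R^c$ with $R$ large, $G_j$ lies in some ball $B(z_j, d)$ with $|z_j|$ large, and Lemma $4.1$ gives the embedding $T_{\chi_{G_j}\nu}^\alpha(\Fp) \subset T_{\chi_{B(z_j,d)}\nu}^\alpha(\Fp)$, so $\|A T_{\chi_{G_j}\nu}^\alpha f\|_{\alpha,p} \leq (\alpha_A(d) + \epsilon) \|T_{\chi_{G_j}\nu}^\alpha f\|_{\alpha,p}$. Using disjointness of the $F_j$, the identity $T_{\chi_{G_j}\nu}^\alpha f = T_\nu^\alpha(\chi_{G_j} f)$, the bounded-overlap property of the $G_j$, and the Fock-Carleson property of $\nu$, the tail satisfies
\[
\Big\| \sum_{j \in J_R^c} M_{\chi_{F_j}} A T_{\chi_{G_j}\nu}^\alpha f \Big\|_{\alpha,p}^p \lesssim (\alpha_A(d) + \epsilon)^p \|f\|_{\alpha,p}^p.
\]
The invertibility of $T_\nu^\alpha$ on $\Fp$ (Lemma $4.3$) then produces a compact operator $K$ with $\|A - K\|_{\Fp \to \Fp} \lesssim \alpha_A + \epsilon$, closing the chain as $\epsilon \to 0$.

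Finally, for $\|A\|_{\e} \approx \sup_{x \in M_{\mathcal{A}} \setminus \C} \|A_x\|$ with $1 < p < \infty$: the bound $\|A\|_{\e} \geq \sup_x \|A_x\|$ follows from $K_x = 0$ for every compact $K$ and $x \notin \C$, which reduces to showing $C_\alpha(-z_\gamma) f \to 0$ weakly in $\Fp$ along any net $z_\gamma \to x$ (use the density of linear combinations of reproducing kernels and the identity $C_\alpha(-z_\gamma) k_{z_0}^\alpha = e^{i\alpha \Imag(z_\gamma \cdot \overline{z_0})} k_{z_0 - z_\gamma}^\alpha$, which tends weakly to $0$ as $|z_\gamma| \to \infty$), combined with the standard fact that compact operators send weakly null to norm null. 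For the reverse direction with $p \geq 2$, I would show $\alpha_A \lesssim \sup_x \|A_x\|$: for near-maximizers $(z_k, f_k = C_\alpha(z_k) h_k)$ of $\alpha_A(r)$ with $\|h_k\| = 1$, the identity $\|Af_k\| = \|A_{-z_k} h_k\|$ together with the fact that the $h_k$ lie in a subspace of uniformly bounded dimension (spanned by the kernels at the shifted lattice points inside $B(0,r)$) allow, after passing to a subnet with $-z_k \to x \in M_{\mathcal{A}} \setminus \C$ and $h_k \to h$ in norm (using the compactness of the torus $\C / \epsilon_0 \mathbb{Z}^{2n}$ to stabilize the shifted lattice), the conclusion $\|Af_k\| \to \|A_x h\| \leq \|A_x\|$ via Corollary $5.4$. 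For $1 < p < 2$, the identities $\|A\|_{\e} = \|A^*\|_{\e}$ and $(A_x)^* = (A^*)_x$ (consequence of $C_\alpha(w)^* = C_\alpha(-w)$) reduce the claim to the already-settled case of $A^* \in \mathcal{T}_q^\alpha$ with $q > 2$.
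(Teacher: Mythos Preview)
Your overall architecture matches the paper's closely: the step $\alpha_A \gtrsim \|A\|_{\e}$ via the Lemma~3.3 decomposition and invertibility of $T_\nu^\alpha$ is exactly what the paper does (it just cites \cite{S}, p.~2209--2210, for the tail estimate you spell out), and your argument that $\alpha_A \lesssim \sup_{x}\|A_x\|$ by extracting a norm-convergent subsequence from a uniformly finite-dimensional family of shifted near-maximizers is precisely the paper's. Your route to $K_x=0$ via weak nullity of $C_\alpha(-z_\gamma)f$ is slightly more direct than the paper's (which passes through Lemma~6.1 and the Berezin transform), but equivalent. One small point: the identity $(A_x)^*=(A^*)_x$ for $x\in M_{\mathcal A}\setminus\C$ is not just a consequence of $C_\alpha(w)^*=C_\alpha(-w)$; it also needs Corollary~5.4 and the WOT-continuity of taking adjoints, as the paper notes.

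There is, however, a real gap in your direct argument for $\beta_A \gtrsim \alpha_A$. You claim the tail $\|M_{\chi_{B(z,R)^c}} Af\|_{\alpha,p}$, for $f\in T^\alpha_{\chi_{B(z,r)}\nu}(\Fp)$, is ``handled by a Schur-test estimate in the spirit of Lemma~2.6''. But Lemma~2.6 controls sums of the form $\sum_j M_{a_j} T_\nu^\alpha M_{b_j}$, i.e.\ off-diagonal pieces of a \emph{Toeplitz} operator; it gives no off-diagonal decay for $M_{\chi_{B(z,R)^c}} A$ when $A$ is an arbitrary element of $\mathcal{T}_p^\alpha$. To salvage your argument you would need uniform off-diagonal decay of $A$ itself, and that forces you to first approximate $A$ in norm by a finite sum of finite products of Toeplitz operators and prove such decay for each product, which is essentially re-deriving Lemma~3.3. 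The paper sidesteps this entirely: it never compares $\beta_A,\gamma_A$ to $\alpha_A$ directly, but instead shows (citing \cite{S}, p.~2210--2211) that $\beta_A\approx\gamma_A\approx\limsup_m\|A_m\|$, where $A_m$ is the Lemma~3.3 tail. Since the chain $\|A\|_{\e}\lesssim\limsup_m\|A_m\|\lesssim\alpha_A\lesssim\sup_x\|A_x\|\lesssim\|A\|_{\e}$ already forces $\limsup_m\|A_m\|\approx\|A\|_{\e}$, this yields $\beta_A,\gamma_A\approx\|A\|_{\e}$ without any direct off-diagonal estimate on $A$. Replacing your $\beta_A\gtrsim\alpha_A$ step with this detour repairs the proof.
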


\begin{remark} Note that the proof of Theorem $6.2$ is similar to the proofs of Theorems $5.2$ and
$9.3$ in \cite{S}.  Thus, we will sometimes only outline arguments of the proof and refer the reader
to \cite{S} for the full details. \end{remark}

\begin{proof}    First note that in the last statement of Theorem $6.2$, it is enough to prove
\begin{equation} \|A\|_{\e} \approx \underset{x \in M_{\mathcal{A}} \backslash \mathbb{C}^n}{\sup}
\|A_x\|_{F_\alpha ^p \rightarrow F_\alpha ^p} \nonumber \end{equation} if $A \in \mathcal{T}_p ^\alpha$ for
$2 \leq p < \infty$, since if $1 < p \leq 2$,  $q$ is the dual exponent, and $A \in \mathcal{T}_p ^\alpha$,
then \begin{align} \|A\|_{\e} & = \|A^*\|_{\e} \nonumber \\ & \approx \underset{x \in
M_{\mathcal{A}} \backslash \mathbb{C}^n}{\sup} \|(A^*) _x\|_{F_\alpha ^q \rightarrow F_\alpha ^q}
\nonumber \\ & = \underset{x \in M_{\mathcal{A}} \backslash \mathbb{C}^n}{\sup} \|(A
_x)^*\|_{F_\alpha ^q \rightarrow F_\alpha ^q} \nonumber \\ & = \underset{x \in M_{\mathcal{A}}
\backslash \mathbb{C}^n}{\sup} \|A _x\|_{F_\alpha ^p \rightarrow F_\alpha ^p} \nonumber \end{align}
where we have used the equality $(A^*) _x = (A_x)^* $ for all $x \in M_\mathcal{A}$, which follows
from the $\WOT$ continuity of taking adjoints combined with Corollary $5.4$.

As in \cite{S}, we will use the notation $\|\cdot\|_{\e}$ and $\|\cdot\|_{\text{ex}}$ to distinguish
the essential norms of an operator from $F_\alpha ^p$ to itself and an operator from $F_\alpha ^p$
to $L_\alpha ^p$.  Moreover, if $R$ is a bounded operator from $F_\alpha ^p$ to itself, then it is
easy to see that \begin{equation} \|R\|_{\text{ex}} \leq \|R\|_{\e} \leq \|P_\alpha \|_{L_\alpha ^p
\rightarrow L_\alpha ^p} \|R\|_{\text{ex}} \nonumber \end{equation} so that $\|R\|_{\text{ex}} $ and
$\|R\|_{\e}$ are equivalent.

Now pick $\epsilon > 0$ and choose Borel sets $F_j \subset G_j \subset \mathbb{C}^n$ as in Lemma
$3.3$ where  \begin{equation} \|A T_\nu ^\alpha - \sum_j M_{\chi_{F_j}} A T_{ \chi_{G_j} \nu}
^\alpha\|_{F_\alpha ^p \rightarrow L_\alpha ^p} \leq \epsilon  \tag{6.3} \end{equation} and write
$A_m = \sum_{j \geq m} M_{\chi_{F_j}} A T_{\chi_{G_j} \nu} ^\alpha $.  Since $\sum_{j = 1}^m
M_{\chi_{F_j}} A T_{\chi_{G_j}  \nu} ^\alpha$ is compact for any $m \geq 1$, $(6.3)$ tells us that
\begin{equation} \|A T_\nu ^\alpha - A_m\|_{\text{ex}} < \epsilon \tag{6.4} \end{equation} for any
$m \geq 1$.  However, by Lemma $3.3$, the monotonicity of the function $r \mapsto \alpha_A (r)$, and
the arguments in \cite{S} p. 2209-2210, we have that \begin{equation*} \limsup_{m \rightarrow
\infty} \|A_m\|_{F_\alpha ^p \rightarrow L_\alpha ^p} \leq C \alpha_A \end{equation*}  which
combined with $(6.4)$ tells us that \begin{align*} \|A T_\nu ^\alpha\|_{\text{ex}}  \leq \limsup_{m
\rightarrow \infty} \|A_m\|_{F_\alpha ^p \rightarrow L_\alpha ^p} + \epsilon \nonumber \leq C
\alpha_A + \epsilon. \nonumber \end{align*} But since Lemma $4.3$ tells us that $T_\nu ^\alpha$ is
invertible, letting $\epsilon \downarrow 0$ in the previous inequality tells us that\begin{equation}
\|A\|_{\e} \leq C \alpha_A. \tag{6.5}  \end{equation}

Now again by the arguments in \cite{S}, p. 2210 - 2211, we have that $\beta_A, \gamma_A$, and
$\limsup_{m \rightarrow \infty} \|A_m\|_{F_\alpha ^p \rightarrow L_\alpha ^p}$ are equivalent.
Moreover $(6.5)$ will give us that $\|A\|_{\e}$ is equivalent to $(i), (ii), $ and $(iii)$ if we can
show that $\alpha_A \leq C \|A\|_{\e}$.

If $Q$ is some compact operator from $F_\alpha ^p$ to itself, then it follows from Theorem $3.9$ that $Q \in \mathcal{T}_p ^\alpha$,  and thus it follows easily from
Lemma $6.1$ that $Q_x = 0$ for all $x \in M_\mathcal{A} \backslash \mathbb{C}^n$. Fix $x \in
M_\mathcal{A} \backslash \mathbb{C}^n$ and let $\{z_\gamma\}_\gamma \subset \mathbb{C}^n$ be a net
converging to $x$.     Since $\SOT$ limits (except for a multiplicative constant) do not increase
the norm, Proposition $5.3$ and the $\SOT$ convergence $A_{z_\gamma} + Q_{z_\gamma} \rightarrow A_x
+ Q_x = A_x$ gives us that \begin{equation*} \|A_x\|_{F_\alpha ^p \rightarrow F_\alpha ^p} \leq C
\liminf_\gamma \|A_{z_\gamma} + Q_{z_\gamma}\|_{F_\alpha ^p \rightarrow F_\alpha ^p}.
\end{equation*} Since this holds for all $x \in M_\mathcal{A} \backslash \mathbb{C}^n$ and all
compact $Q$, we easily get that \begin{equation} \underset{x \in M_{\mathcal{A}} \backslash
\mathbb{C}^n}{\sup} \|A_x\|_{F_\alpha ^p \rightarrow F_\alpha ^p} \leq C \|A\|_{\e}. \tag{6.6}
\nonumber \end{equation}

Finally, by $(6.5)$ and $(6.6)$, the proof will be completed if we can show that \begin{equation*}
\alpha _A \leq C \underset{x \in M_{\mathcal{A}} \backslash \mathbb{C}^n}{\sup} \|A_x\|_{F_\alpha ^p
\rightarrow F_\alpha ^p}.  \end{equation*} \noindent To that end, let $r > 0$. Pick a sequence
$\{z_j\}_j$ tending to $\infty$ as $ j \rightarrow \infty$ and a normalized sequence $f_j \in
T_{\chi_{B(z_j, r)} \nu} ^\alpha (F_\alpha ^p) $ such that $\|A f_j\|_{\alpha, p} \rightarrow
\alpha_A (r)$.  Thus, there are $h_j \in F_\alpha ^p$ where \begin{equation*} f_j(w) =
T_{\chi_{B(z_j, r)} \nu} ^\alpha h_j (w)  = \sum_{\sigma \in \epsilon_0  \mathbb{Z}^{2n} \cap B(z_j
, r) } h_j (\sigma) e^{\alpha (w \cdot \overline{\sigma}) - \alpha |\sigma|^2}. \nonumber
\end{equation*} For each $ \sigma \in \epsilon_0  \mathbb{Z}^{2n} \cap B(z_j , r)$, let $\sigma(j) =
\sigma - z_j$ so that each $\sigma(j) \in B(0, r)$.  A direction computation now tells us that
\begin{equation*} C_\alpha (z_j)^* f_j(w) = \sum_{\sigma(j) \in B(0, r)} a_{\sigma(j)} k_{\sigma(j)}
^\alpha(w) \nonumber \end{equation*} where $a_{\sigma(j)} = h_j(\sigma) e^{-\frac{\alpha}{2}
|\sigma|^2} e^{ i \alpha \Imag \sigma \cdot \overline{z_j}} $. Let $q$ be the dual exponent of $p$,
so that $1 < q \leq 2$.   Now for each fixed $j$ and fixed $\sigma_0 (j) \in B(0, r)$, pick $g =
g_{j, \sigma_0(j)}$ according to Lemma $4.1$ where \begin{displaymath}
   g(\sigma(j)) = \left\{
     \begin{array}{lr}
       1 & \text{ if } \sigma = \sigma_0\\
       0 &  \text{ if } \sigma \neq \sigma_0
     \end{array}
   \right.
\end{displaymath} and $\|g_{j, \sigma_0 (j)}\|_{\alpha, q} \leq C$ where $C = C(\alpha, \epsilon_0,
p, n, r)$.  Then by the reproducing property, we get that \begin{align*} \langle C_\alpha (-z_j)
f_j, g \rangle _{F_\alpha ^2} & = \sum_{\sigma(j) \in B(0, r)} a_{\sigma(j)} e^{-\frac{\alpha}{2}
|\sigma(j)|^2} g(\sigma(j)) \nonumber \\ & = a_{ \sigma_0 (j)} e^{- \frac{\alpha}{2} |\sigma_0(j)|
^2 } \nonumber \end{align*} which says that each $ |a_{\sigma(j)}| \leq C$ for some $C = C(\alpha,
\epsilon_0, p, n, r)$.

Now pick some $M = M(\epsilon_0, n , r)$ where $ M'(j):= \card \epsilon_0  \mathbb{Z}^{2n} \cap
B(z_j, r)  \leq M$ and enumerate $\epsilon_0  \mathbb{Z}^{2n} \cap B(z_j, r)$ as $\sigma_1, \ldots,
\sigma_M$. Clearly we may choose a subsequence of $\{z_j\}_j$ such that $M'(j) \equiv M_0 \leq M$
for some $M_0 \in \mathbb{N}$ that is independent of $j,$ and so without loss of generality we can
assume that \begin{equation*}\{(\sigma_1(j), \ldots, \sigma_{M_0}(j), a_{\sigma_1(j)}, \ldots,
a_{\sigma_{M_0}(j)})\}_{j = 1}^\infty \subset \mathbb{C}^{(n+1)M_0}. \end{equation*}  Since the
sequence $\{(\sigma_1(j), \ldots, \sigma_{M_0}(j), a_{\sigma_1(j)}, \ldots,
a_{\sigma_{M_0}(j)})\}_{j = 1}^\infty$ is bounded, we can (passing to another subsequence if
necessary) assume that this sequence converges to a point \begin{equation*} (\sigma_1, \ldots,
\sigma_{M_0}, a_{\sigma_1}, \ldots, a_{\sigma_{M_0}}) \in \mathbb{C}^{(n+1)M_0} \end{equation*}
where each $|\sigma_i| \leq  C= C(\alpha, \epsilon_0, p, n, r)$.  An easy application of the
Lebesgue dominated convergence theorem then gives us that \begin{equation} C_\alpha (z_j) ^* f_j
\rightarrow h := \sum_{i = 1}^{M_0} a_{\sigma_{i} } k_{\sigma_i} \text{ in } F_\alpha ^p
\tag{6.7}\end{equation} which says that \begin{align} \alpha_A (r) = \lim_j \|A f_j \|_{\alpha, p }
& = \lim_j \|A_{- z_j} C_\alpha (-z_j) f_j \|_{\alpha, p} \nonumber \\ & = \lim_j \|A_{- z_j} h
\|_{\alpha, p}. \tag{6.8}  \end{align} Now since $M_\mathcal{A}$ is compact, we can choose a subnet
$\{- z_\gamma\}$ of the sequence $\{- z_j\}$ converging to some $x \in M_{\mathcal{A}} \backslash
\mathbb{C}^n$, which means that $\lim_\gamma \|A_{-z_\gamma} h\|_{\alpha, p} = \lim_\gamma \|A_{-x}
h\|_{\alpha, p}$.  Combining this with $(6.8)$ and Proposition $5.3$ finally gives us that
\begin{align*} \alpha_A (r)  = \lim_\gamma \|A_{- z_\gamma} h \|_{\alpha,  p} = \|A_x h \|_{\alpha,
p}  \leq  \underset{u \in M_{\mathcal{A}} \backslash \mathbb{C}^n}{\sup} \|A_u\|_{F_\alpha ^p
\rightarrow F_\alpha ^p} \nonumber \end{align*} since $(6.7) $ tells us that $\| h\|_{\alpha, p} =
1$.  \end{proof}

\begin{corollary} Let $1 < p < \infty$ and $A \in \mathcal{T}_p ^\alpha$.  Then \begin{equation*} \|A\|_{\e}
\approx \sup_{\|f\|_{\alpha , p} = 1} \limsup _{|z| \rightarrow \infty} \|A_z f \|_{\alpha, p}.
\end{equation*} \end{corollary}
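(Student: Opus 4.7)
The plan is to derive the stated equivalence directly from the last conclusion of Theorem~$6.2$, which asserts that $\|A\|_\e \approx \sup_{x \in M_\mathcal{A} \setminus \C} \|A_x\|_{F_\alpha^p \to F_\alpha^p}$ for all $1 < p < \infty$ and $A \in \mathcal{T}_p^\alpha$. Thus it suffices to show that
\[
\sup_{x \in M_\mathcal{A} \setminus \C} \|A_x\|_{F_\alpha^p \to F_\alpha^p} \approx \sup_{\|f\|_{\alpha,p} = 1} \limsup_{|z| \to \infty} \|A_z f\|_{\alpha,p}.
\]
The key observation powering both inequalities is that for each fixed $f \in F_\alpha^p$ the scalar function $\phi_f(y) := \|A_y f\|_{\alpha, p}$ is continuous on $M_\mathcal{A}$, since by Proposition~$5.3$ and Corollary~$5.4$ the map $y \mapsto A_y f$ is continuous from $M_\mathcal{A}$ into $F_\alpha^p$ with its norm topology.

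For the upper bound I would fix $f$ with $\|f\|_{\alpha,p} = 1$ and a sequence $\{z_k\} \subset \C$ with $|z_k| \to \infty$, and, after passing to a subsequence, assume $\|A_{z_k} f\|_{\alpha, p} \to L := \limsup_k \|A_{z_k} f\|_{\alpha, p}$. By compactness of $M_\mathcal{A}$, there is a subnet $\{z_\gamma\}$ converging to some $x \in M_\mathcal{A}$; a short argument using continuity of the embedding $\C \hookrightarrow M_\mathcal{A}$ together with $|z_k| \to \infty$ forces $x \notin \C$. Corollary~$5.4$ and continuity of $\phi_f$ then give $\|A_{z_\gamma} f\|_{\alpha, p} \to \|A_x f\|_{\alpha, p}$, and since any subnet of a convergent sequence of real numbers converges to the same limit, one gets $L = \|A_x f\|_{\alpha, p} \leq \|A_x\| \leq \sup_{x \in M_\mathcal{A} \setminus \C} \|A_x\|$, as needed.

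For the lower bound I would fix $x \in M_\mathcal{A} \setminus \C$, $f$ with $\|f\|_{\alpha, p} = 1$, $\epsilon > 0$, and $R > 0$. Continuity of $\phi_f$ at $x$ supplies an open neighborhood $U$ of $x$ in $M_\mathcal{A}$ on which $\phi_f > \|A_x f\|_{\alpha, p} - \epsilon$; since $\overline{B(0, R)} \subset \C$ is compact, its image is compact and hence closed in $M_\mathcal{A}$, so there is another open neighborhood $V$ of $x$ disjoint from it. Density of $\C$ in $M_\mathcal{A}$ then produces some $z \in U \cap V \cap \C$, which necessarily satisfies $|z| > R$ and $\|A_z f\|_{\alpha, p} > \|A_x f\|_{\alpha, p} - \epsilon$. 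Letting $R \to \infty$ and then $\epsilon \to 0^+$ yields $\limsup_{|z| \to \infty} \|A_z f\|_{\alpha, p} \geq \|A_x f\|_{\alpha, p}$, and taking the supremum over $f$ and then over $x$ closes the chain of inequalities.

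The main conceptual obstacle is that $M_\mathcal{A}$ is not metrizable (since $\mathcal{A}$ is non-separable), which prevents extracting convergent subsequences in $M_\mathcal{A}$ directly. This is precisely what forces the net-based subsequence argument in the upper bound and the density-plus-continuity argument in the lower bound. With the heavy machinery of Section~$5$ and Theorem~$6.2$ already in hand, however, the present statement is essentially a topological reformulation and requires no new function-theoretic estimates.
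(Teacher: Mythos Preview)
Your argument is correct and follows exactly the route indicated by the paper's reference to Corollary~9.4 of \cite{S}: reduce to the characterization $\|A\|_\e \approx \sup_{x \in M_\mathcal{A} \setminus \C} \|A_x\|$ from Theorem~6.2, and then pass between $\C$ and $M_\mathcal{A} \setminus \C$ using density of $\C$ in $M_\mathcal{A}$ together with the SOT-continuity of $y \mapsto A_y$ supplied by Proposition~5.3 and Corollary~5.4. The only slight imprecision is invoking ``continuity of the embedding $\C \hookrightarrow M_\mathcal{A}$'' to force $x \notin \C$; continuity alone points the wrong way, and what is actually needed is a function in $\mathcal{A}$ vanishing at infinity (for instance $a(z) = (1 + |z - w|^2)^{-1}$) to rule out $x = \delta_w$ when $|z_\gamma| \to \infty$.
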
 \begin{proof}

The proof is identical to the proof of Corollary $9.4$ in \cite{S}. \end{proof}

\noindent \textit{Proof of Theorem $1.1$.} It has already been proven that $A$ is compact on
$F_\alpha ^p$ only if $A \in \mathcal{T}_p ^\alpha$ and it was remarked in the introduction that $B_\alpha
(A)$ vanishes at infinity if $A$ is compact.  Now if $B_\alpha (A)$ vanishes at infinity, then Lemma
$6.1$ tells us that $A_x = 0$ for all $x \in M_\mathcal{A} \backslash \mathbb{C}^n$.  Theorem $6.2$
then says that $\|A\|_{\e} = 0$, which means that $A$ is compact.  $ \hfill \square$

\section{The case $p = 2$ and open problems} As in \cite{S}, we can significantly improve Theorem
$6.2$ when $p = 2$. If $A \in F_\alpha ^2$, let $\sigma(A)$ be the spectrum of $A$ and let $r(A) =
\max \{|\lambda| : \lambda \in \sigma(A)\}$.  Moreover, let $\sigma_{\e}(\sigma)$ be the essential
spectrum of $A$ and let  $r_{\e} (A) = \max \{|\lambda| : \lambda \in \sigma_{\e} (A)\}.$

\begin{theorem}

If $A \in \mathcal{T}_2 ^\alpha$, then \begin{equation*} \|A\|_{\e} = \underset{x \in M_\mathcal{A}
\backslash \mathbb{C}^n}{\sup} \|A_x \|_{F_\alpha ^2 \rightarrow F_\alpha ^2}. \nonumber
\end{equation*}  Moreover, \begin{equation*}  \underset{x \in M_\mathcal{A} \backslash
\mathbb{C}^n}{\sup} r(A_x) \leq \lim_{k \rightarrow \infty} \left(\underset{x \in M_\mathcal{A}
\backslash \mathbb{C}^n}{\sup} \|A_x ^k \|_{F_\alpha ^2 \rightarrow F_\alpha ^2} ^\frac{1}{k}
\right) = r_{\e} (A)   \nonumber \end{equation*} with equality if $A$ is essentially normal.

\end{theorem}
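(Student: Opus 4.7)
The plan is to prove the three assertions of Theorem $7.1$ in order, where the principal new input is upgrading the equivalence in Theorem $6.2$ to an exact equality in the Hilbert space case; the remaining two statements then follow mechanically. For the upper bound $\sup_x \|A_x\| \leq \|A\|_{\e}$, I would fix $x \in M_\mathcal{A} \setminus \mathbb{C}^n$ and a net $z_\gamma \to x$ in $\mathbb{C}^n$; then for any compact operator $K$, Theorem $3.9$ and Lemma $6.1$ give $K_x = 0$, so $(A-K)_x = A_x$, and by Proposition $5.3$, $A_{z_\gamma} - K_{z_\gamma} \to A_x$ in $\SOT$. Since the $C_\alpha(w)$ are isometries, $\|A_{z_\gamma} - K_{z_\gamma}\| = \|A-K\|$, and because the operator norm on a Hilbert space is $\SOT$ lower-semicontinuous with constant exactly $1$, we conclude $\|A_x\| \leq \|A-K\|$. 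Taking the infimum over $K$ compact yields the desired bound.

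For the reverse direction $\|A\|_{\e} \leq \sup_x \|A_x\|$, I would sharpen the proof of Theorem $6.2$ at $p=2$. That proof produces $\|A\|_{\e} \leq C \alpha_A \leq C' \sup_x \|A_x\|$, with the constants entering through (a) the covering arguments in Lemmas $3.1$--$3.3$ and the Schur estimate of Lemma $2.6$, and (b) the step ``$\SOT$ limits do not increase the norm except for a multiplicative constant''; in the Hilbert case (b) contributes a factor of exactly $1$, while (a) can be tightened by an integral-operator Schur-test computation in the spirit of Theorem $3.7$, exploiting the near-orthogonality of the localized pieces $M_{\chi_{F_j}} A T_{\chi_{G_j}\nu}^\alpha$ when the supports $F_j, G_j$ are far apart. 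The main technical obstacle is precisely this tightening, since it requires revisiting the constants throughout Lemmas $2.6$ and $3.1$--$3.3$, as well as in the final passage from $\alpha_A$ to $\sup_x \|A_x\|$, and verifying that they all collapse to $1$ in the limit.

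For the second assertion, I would first establish the identity $(A^k)_x = (A_x)^k$ for every $x \in M_\mathcal{A}$ and every $k \in \mathbb{N}$. On $\mathbb{C}^n$ this is immediate from $A_w = C_\alpha(w) A C_\alpha(-w)$, and for $x \in M_\mathcal{A} \setminus \mathbb{C}^n$ it follows from Proposition $5.3$ together with the fact that products of uniformly norm-bounded $\SOT$-convergent nets on a Hilbert space converge in $\SOT$ (via Banach--Steinhaus). Applying the first assertion to $A^k \in \mathcal{T}_2^\alpha$ yields $\|A^k\|_{\e} = \sup_x \|A_x^k\|$, and the Beurling--Gelfand spectral radius formula applied in the Calkin algebra gives $r_{\e}(A) = \lim_k \|A^k\|_{\e}^{1/k} = \lim_k \bigl(\sup_x \|A_x^k\|\bigr)^{1/k}$. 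Finally, for each fixed $x$ one has $r(A_x) = \lim_k \|A_x^k\|^{1/k} \leq \lim_k \bigl(\sup_y \|A_y^k\|\bigr)^{1/k} = r_{\e}(A)$, and taking the supremum over $x$ produces $\sup_x r(A_x) \leq r_{\e}(A)$.

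For the last assertion, suppose $A$ is essentially normal, so that $[A, A^*]$ is compact. Then $[A, A^*]_x = 0$ for every $x \in M_\mathcal{A} \setminus \mathbb{C}^n$, and combining the multiplicativity of $(\cdot)_x$ just established with $(A^*)_x = (A_x)^*$ (which comes from the $\WOT$ continuity of taking adjoints together with Corollary $5.4$), we obtain $[A_x, A_x^*] = 0$, so each $A_x$ is normal. For a normal operator one has $r(T) = \|T\|$; combined with the first assertion this gives $\sup_x r(A_x) = \sup_x \|A_x\| = \|A\|_{\e}$. Since the image of $A$ in the Calkin algebra is itself normal, $r_{\e}(A) = \|A\|_{\e}$ as well, so all three quantities in the displayed chain coincide, completing the proof.
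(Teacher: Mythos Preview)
Your argument for the inequality $\sup_x \|A_x\| \leq \|A\|_{\e}$ is correct, as are your treatments of the second and third assertions. The gap is in your proposed route to the reverse inequality $\|A\|_{\e} \leq \sup_x \|A_x\|$. Trying to drive every constant in Lemmas $2.6$, $3.1$--$3.3$, and in the passage through $\alpha_A$ down to $1$ is not how the paper (following \cite{S}) proceeds, and there is no reason to expect it to succeed: those constants involve covering multiplicities such as $4^{2n}$, the norm of $(T_\nu^\alpha)^{-1}$, and absolute constants from Lemma $2.4$, none of which tends to $1$ under any limiting procedure available in the argument.

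The intended route---and the one explicitly signalled by the remark immediately following Theorems $7.1$--$7.3$---is to bootstrap from the approximate inequality of Theorem $6.2$ via the $C^*$-identity and iteration. From Theorem $6.2$ one has $\|B\|_{\e} \leq C \sup_x \|B_x\|$ for every $B \in \mathcal{T}_2^\alpha$, with a fixed constant $C$. Apply this with $B = (A^*A)^k$. Using your own observations that $(\cdot)_x$ is multiplicative on $\mathcal{T}_2^\alpha$ and that $(A^*)_x = (A_x)^*$, one gets $((A^*A)^k)_x = (A_x^* A_x)^k$. Since $A^*A$ is positive, its image in the Calkin algebra is positive and hence $\|(A^*A)^k\|_{\e} = \|A^*A\|_{\e}^k = \|A\|_{\e}^{2k}$; similarly each $A_x^* A_x$ is positive on $F_\alpha^2$, so $\|(A_x^* A_x)^k\| = \|A_x\|^{2k}$. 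Thus
\[
\|A\|_{\e}^{2k} \leq C \bigl(\sup_x \|A_x\|\bigr)^{2k},
\]
and taking $2k$-th roots and letting $k \to \infty$ eliminates $C$, yielding $\|A\|_{\e} \leq \sup_x \|A_x\|$. This is precisely the fact $r(A) = \|A\|$ for self-adjoint $A$ that the paper invokes.
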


\begin{theorem} If $A \in \mathcal{T}_2 ^\alpha$ then following are equivalent:
\begin{list}{}{\setlength\parsep{0in}} \item $(i) \ \lambda \notin \sigma_{\e} (A) $,
 \item $(ii) \  \lambda \notin \bigcup_{x \in M_\mathcal{A} \backslash \mathbb{C}^n} \sigma (A_x)
     \text{ and } \underset{x \in M_\mathcal{A} \backslash \mathbb{C}^n }{\sup} \|(S_x - \lambda I )
     ^{-1} \|_{F_\alpha ^2 \rightarrow F_\alpha ^2} < \infty$
\item $(iii) \  \text{there is } \gamma > 0 \text{ depending only on } \lambda, \text{ such that } $
    \begin{equation*}    \|(S_x - \lambda I ) f  \|_{\alpha, 2} \geq \gamma \|f\|_{\alpha, 2} \text{
    and } \|(S_x ^* - \overline{\lambda} I ) f  \|_{\alpha, 2} \geq \gamma \|f\|_{\alpha, 2}
    \end{equation*} for all $f \in F_\alpha ^2$ and $x \in M_{\mathcal{A}} \backslash \mathbb{C}^n$.
    \end{list} \end{theorem}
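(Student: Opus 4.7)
The plan is to establish the equivalences $(i)\Leftrightarrow (ii)\Leftrightarrow (iii)$ by reducing each condition to invertibility of $[A-\lambda I]$ in a suitable $C^*$-algebra. The equivalence $(ii)\Leftrightarrow (iii)$ is immediate from a standard Hilbert space fact: a bounded operator $T$ is invertible if and only if both $T$ and $T^*$ are bounded below, and in that case $\|T^{-1}\|^{-1}$ equals the common lower bound of $T$ and $T^*$. Applied fiberwise to $T=A_x-\lambda I$ across all $x\in M_{\mathcal{A}}\setminus \mathbb{C}^n$, this converts a uniform bound $M$ on $\|(A_x-\lambda I)^{-1}\|$ into a uniform lower bound $\gamma=M^{-1}$ for both $A_x-\lambda I$ and $A_x^*-\overline{\lambda}I$, and conversely.

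For $(i)\Leftrightarrow (ii)$, the strategy is to introduce an isometric $*$-homomorphism
\begin{equation*}
\Phi : \mathcal{T}_2^\alpha/\mathcal{K} \longrightarrow \prod_{x \in M_{\mathcal{A}}\setminus \mathbb{C}^n} B(F_\alpha^2), \qquad \Phi([A]):=(A_x)_x,
\end{equation*}
where $\mathcal{K}$ denotes the ideal of compact operators on $F_\alpha^2$. The map descends to the quotient since $\mathcal{K}\subset \mathcal{T}_2^\alpha$ by Theorem 3.9 and $K_x=0$ for compact $K$ by Lemma 6.1. That $\Phi$ is a $*$-homomorphism follows from Corollary 5.4: along any net $z_\gamma\to x$ we have $A_{z_\gamma}\to A_x$ in SOT, and since $(AB)_{z_\gamma}=A_{z_\gamma}B_{z_\gamma}$ with uniformly norm-bounded factors, passing to the SOT limit yields $(AB)_x=A_xB_x$; the identity $(A^*)_x=(A_x)^*$ is obtained similarly using $(A^*)_{z_\gamma}=(A_{z_\gamma})^*$ together with WOT continuity of the adjoint. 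The essential norm identity $\|A\|_{\e}=\sup_{x}\|A_x\|$ from Theorem 7.1 then shows $\Phi$ is isometric, so its image is a closed $*$-subalgebra of the product.

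With $\Phi$ in place, invoke $C^*$-algebraic spectral permanence: an element of a unital $C^*$-subalgebra is invertible in the subalgebra if and only if it is invertible in the ambient $C^*$-algebra. Hence $\lambda\notin \sigma_{\e}(A)$ is equivalent to $[A-\lambda I]$ being invertible in $\mathcal{T}_2^\alpha/\mathcal{K}$, which via $\Phi$ is equivalent to $(A_x-\lambda I)_x$ being invertible in $\prod_x B(F_\alpha^2)$. Invertibility in this product algebra is exactly the statement of $(ii)$: each fiber $A_x-\lambda I$ is invertible and the inverses are uniformly norm-bounded.

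The main obstacle I anticipate is the verification that $\Phi$ is multiplicative. Proposition 5.3 produces the SOT-extension of $\Psi_A$ one operator at a time, so one must argue carefully that the uniformly norm-bounded, SOT-convergent nets $A_{z_\gamma}$ and $B_{z_\gamma}$ have their product converging in SOT to $A_x B_x$, which relies on the fact that multiplication is jointly SOT-continuous on norm-bounded subsets of $B(F_\alpha^2)$. A secondary issue is making sure the essential norm identity in Theorem 7.1 is established independently of Theorem 7.2 (so that no circularity enters); this is a direct consequence of Theorem 6.2 combined with Lemma 6.1 and the observation that the image of $\Phi$ inherits the $C^*$-norm from the product.
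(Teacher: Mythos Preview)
Your approach is correct and is precisely the route the paper intends (it defers to \cite{S}, where the argument runs exactly through the isometric $*$-homomorphism $[A]\mapsto (A_x)_x$ built from Theorem~7.1, Corollary~5.4, and Lemma~6.1, followed by $C^*$-spectral permanence). One small point to make explicit: spectral permanence is needed twice---once to pass from the Calkin algebra to $\mathcal{T}_2^\alpha/\mathcal{K}$ (so that $\lambda\notin\sigma_{\mathrm e}(A)$ really is equivalent to invertibility of $[A-\lambda I]$ in $\mathcal{T}_2^\alpha/\mathcal{K}$), and once more to pass from $\Phi(\mathcal{T}_2^\alpha/\mathcal{K})$ to the ambient product $\prod_x B(F_\alpha^2)$; both steps require that the subalgebra in question be a unital $C^*$-subalgebra, which here follows since $\mathcal{T}_2^\alpha$ is $*$-closed and contains $\mathcal{K}$ and the identity.
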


\begin{theorem}
 If $A \in \mathcal{T}_2 ^\alpha$, then \begin{equation*} \overline{\bigcup_{x \in M_{\mathcal{A}}
 \backslash \mathbb{C}^n} \sigma (A_x) } \subset \sigma_{\e} (A) \end{equation*} \end{theorem}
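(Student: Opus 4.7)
The plan is to deduce Theorem $7.3$ directly from Theorem $7.2$ via a Neumann-series (resolvent) argument, using in an essential way the uniform bound furnished by condition (ii) of Theorem $7.2$. Equivalently, I will show that the complement of $\sigma_{\e}(A)$ is disjoint from $\bigcup_{x \in M_{\mathcal{A}} \setminus \mathbb{C}^n} \sigma(A_x)$, and then invoke the fact that $\sigma_{\e}(A)$ is closed to conclude that it also contains the closure.

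First, I would fix any $\lambda \notin \sigma_{\e}(A)$ and apply (i)$\Rightarrow$(ii) of Theorem $7.2$ to obtain
\begin{equation*}
M := \sup_{x \in M_{\mathcal{A}} \setminus \mathbb{C}^n} \|(A_x - \lambda I)^{-1}\|_{F_\alpha^2 \to F_\alpha^2} < \infty
\end{equation*}
together with the fact that $\lambda \notin \sigma(A_x)$ for every $x \in M_{\mathcal{A}} \setminus \mathbb{C}^n$. This is the crucial input; everything else is soft.

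Next, for any $\mu \in \mathbb{C}$ with $|\mu - \lambda| < 1/M$ I would factor
\begin{equation*}
A_x - \mu I = (A_x - \lambda I)\bigl[I - (\mu - \lambda)(A_x - \lambda I)^{-1}\bigr].
\end{equation*}
Since $|\mu - \lambda|\,\|(A_x - \lambda I)^{-1}\| \leq |\mu - \lambda| M < 1$ uniformly in $x$, the Neumann series for the bracketed factor converges in the operator norm and yields invertibility of $A_x - \mu I$ with $\|(A_x - \mu I)^{-1}\| \leq M(1 - |\mu - \lambda|M)^{-1}$, again uniformly in $x$. Consequently $\mu \notin \sigma(A_x)$ for every $x \in M_{\mathcal{A}} \setminus \mathbb{C}^n$, so the open disc $\{\mu : |\mu - \lambda| < 1/M\}$ lies entirely in the complement of $\bigcup_x \sigma(A_x)$. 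This shows that $\sigma_{\e}(A)^c$ is an open set disjoint from $\bigcup_x \sigma(A_x)$, i.e.\ $\bigcup_x \sigma(A_x) \subseteq \sigma_{\e}(A)$.

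Finally, since $\sigma_{\e}(A)$ is closed in $\mathbb{C}$, passing to the closure on the left preserves the inclusion, yielding
\begin{equation*}
\overline{\bigcup_{x \in M_{\mathcal{A}} \setminus \mathbb{C}^n} \sigma(A_x)} \subseteq \sigma_{\e}(A),
\end{equation*}
which is the desired containment. The only potential ``obstacle'' is that the entire argument hinges on the uniform resolvent bound in Theorem $7.2$(ii); without the uniformity in $x$ one could only conclude $\lambda \notin \sigma(A_x)$ pointwise in $x$, which would not prevent $\lambda$ from being a limit point of $\bigcup_x \sigma(A_x)$. Since Theorem $7.2$ is already established, no new analytic input is needed here — the proof is a clean corollary.
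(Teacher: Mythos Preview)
Your proof is correct and follows the same route as the paper (which simply cites Su\'arez \cite{S}): deduce the inclusion from Theorem $7.2$. However, the Neumann-series step is superfluous. The implication (i)$\Rightarrow$(ii) already gives you, for each $\lambda \notin \sigma_{\e}(A)$, that $\lambda \notin \sigma(A_x)$ for every $x \in M_{\mathcal{A}} \setminus \mathbb{C}^n$; this is precisely the containment $\bigcup_x \sigma(A_x) \subseteq \sigma_{\e}(A)$, and since $\sigma_{\e}(A)$ is closed you may pass to the closure on the left immediately. Your stated worry---that without the uniform bound $\lambda$ might be a limit point of $\bigcup_x \sigma(A_x)$---is misplaced: you are not trying to show $\lambda \notin \overline{\bigcup_x \sigma(A_x)}$, only that this closure sits inside the already-closed set $\sigma_{\e}(A)$. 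The uniform resolvent bound in (ii) is therefore irrelevant for Theorem $7.3$; it matters for the \emph{converse} direction (ii)$\Rightarrow$(i) in Theorem $7.2$, not here.
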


The proofs of these results are identical to the proofs of the corresponding results in \cite{S},
and in particular depend on the well known fact that \begin{equation*} r(A) = \lim_{k \rightarrow
\infty} \|A^k \|_{F_\alpha ^2 \rightarrow F_\alpha ^2} ^\frac{1}{k} = \|A\|_{F_\alpha ^2 \rightarrow
F_\alpha ^2} \end{equation*} whenever $A$ is self adjoint.  \\

 We will close this paper with a discussion of some open problems.  By using ideas in \cite{G, MT}, it is very likely that the results in Section $4$ hold for more
general weighted Fock spaces $F_\phi ^p$ for suitable weight functions $\phi : \mathbb{C}^n
\rightarrow \mathbb{R}^+$, where \begin{equation*} F_\phi ^p:=  \{f \text{ entire} : f(\cdot)  e^{-
\phi (\cdot)} \in L^p (\mathbb{C}^n, dv)\}. \end{equation*} Thus, it would be interesting to know if
our results hold for other weighted Fock spaces.

For the space $F_\alpha ^2$, one should notice that the reproducing kernel and the Gaussian weight behave extremely ``nicely'' together.  One should also notice that this simple fact was crucial in proving many of the results in Sections $2$ and $3$ (and is in fact crucial for proving compactness results for individual Toeplitz operators or finite sums of finite products of Toeplitz operators on the Fock space, see \cite{BCI, I}.) Unfortunately, such nice behavior between the reproducing kernel and more general weights rarely holds, and overcoming this would most likely be the most challenging obstacle in extending the results of this paper to more general weighted Fock spaces.

Note that one can easily find examples of functions $f \in L_\alpha ^1$ where $B_\alpha (f)$ vanishes at infinity but $T_f ^\alpha$ is not compact on $F_\alpha ^2$ (see \cite{BC, BCI}).  It would be interesting to know if other such examples can be found for $F_\alpha ^p$ when $p \neq 2$.  However, it would be far more interesting to know if one could come up with similar examples for the Bergman space of the ball, weighted or unweighted (where the condition $f \in L_\alpha ^1$ is replaced by $f \in L^1(\mathbb{B}_n, dv_\gamma)$ in the weighted case.)

Finally, it would be interesting to know whether the results of this paper hold for the $p = 1$ or $p = \infty$ case.  While some incomplete results are known in the Bergman space setting when $p = 1$ or $p = \infty$ (see \cite{GL, Y}), it appears that there are no known results for the Fock space when $p = 1$ or $p = \infty$.

\section*{Acknowledgements} The authors would like to thank Karlheinz Gr\"{o}chenig and Kristian
Seip for their useful information regarding Section $4$. The authors would also like to thank the reviewer for his or her helpful comments that greatly improved the exposition, and would like to thank Brett Wick for interesting comments regarding this paper and for providing us with the preprint \cite{MW}.

\end{document}